\documentclass[10pt, a4paper]{article}
\usepackage{geometry}
\geometry{margin=1in}
 
\usepackage[english]{babel}

\usepackage{amsmath,amssymb,amsthm,amsthm}

\usepackage{mathrsfs}

\usepackage{todonotes}  

\usepackage[noadjust]{cite} 
\usepackage{xcolor}

\usepackage{hyperref}
\hypersetup{
    colorlinks,
    citecolor=black,
    filecolor=black,
    linkcolor=black,
    urlcolor=black    
}
\usepackage[utf8]{inputenc}
\usepackage{amsmath}
\usepackage{amssymb}
\usepackage{mathtools}
\usepackage{cancel}
\usepackage {graphicx}
\usepackage{amsmath}
\usepackage{floatrow}
\usepackage{multirow}
\usepackage[english]{babel}
\usepackage{mathalfa}
\usepackage{dutchcal}
\usepackage{amsmath}
\usepackage{tikz}

\usepackage{amssymb,amsmath,amsthm}
\usepackage{graphicx}

\usepackage{amsfonts}
\usepackage{latexsym}
\usepackage{color}
\usepackage[english]{babel}
\usepackage{graphicx} 
\usepackage{subcaption}
\usepackage{caption}
\usepackage{mathrsfs}
\usepackage{multicol}
\usepackage{fullpage}
\usepackage{xcolor}
\usepackage{hyperref}
\hypersetup{
   colorlinks=true,
  linkcolor=blue,
  filecolor=magenta,      
 urlcolor=cyan,
}
\hypersetup{
    colorlinks=true,
    linkcolor=blue,
    filecolor=magenta,      
    urlcolor=cyan,
}
\usepackage{bbold}


\newtheorem{teorema}{Theorem}[section]
\newtheorem{definizione}[teorema]{Definition}
 \newtheorem{corollario}[teorema]{Corollary}
\newtheorem{lemma}[teorema]{Lemma}
\newtheorem{proposizione}[teorema]{Proposition}
\newtheorem{osservazione}[teorema]{Remark}

\usepackage{mathtools}
\newcommand{\numberset}{\mathbb}

\newcommand{\R}{\numberset{R}}

\newcommand{\KEYWORDS}[1]{
  \noindent\textbf{Keywords:} #1
}

\newcommand{\AMSSUBJ}[1]{
  \noindent\textbf{AMS Subject Classification:} #1
}

\title{Random rotational invariance of integration by parts formulas within a Bismut-type approach}
\date{}

\author{
Susanna Dehò \footnote{Dipartimento di Matematica, Universit\`a degli Studi di Milano, \emph{susanna.deho@unimi.it}} \and
	Francesco C. De Vecchi\footnote{Dipartimento di Matematica ``Felice Casorati'', Universit\`a degli Studi di Pavia
		\emph{francescocarlo.devecchi@unipv.it}}
	\and 
	Paola Morando \footnote{DISAA, Universit\`a degli Studi di Milano,  \emph{paola.morando@unimi.it}}
	\and
	Stefania Ugolini \footnote{Dipartimento di Matematica, Universit\`a degli Studi di Milano, \emph{stefania.ugolini@unimi.it}}
}

\begin{document}
\maketitle

\begin{abstract} 
	The stochastic rotational invariance of an integration by parts formula inspired by the Bismut approach to Malliavin calculus is proved in the framework of the Lie symmetry theory of stochastic differential equations. The non-trivial effect of the rotational invariance of the driving Brownian motion in the derivation of the integration by parts formula is discussed and the invariance property of the formula is shown via applications to some explicit two-dimensional Brownian motion-driven stochastic models.
\end{abstract}

\KEYWORDS{Integration by parts; Lie symmetry analysis of SDEs; Quasi invariance of SDEs; Bismut-Malliavin calculus.}
\\
\AMSSUBJ{60H10; 58D19; 60H07}

\section{Introduction}
The study of the symmetries and invariance properties of stochastic processes is important to understand their long-term behavior, simplify their mathematical analysis, identify conservation laws, and improve numerical methods
(see, e.g., 
\cite{grandemostro,romano,privault2019invariance,Thieullen1997Symmetries}).\\

More specifically, the study of invariance properties of stochastic differential equations (SDEs) via a Lie symmetry approach plays a fundamental role in both the theoretical understanding and in the practical analysis of their solution processes.
A Lie symmetry analysis of SDEs, including both deterministic and random symmetries (\cite{albeverio2019some,gaetaspadaro}), 
can be useful for reducing symmetric equations to simpler ones (see, e.g., \cite{paperriduzione1,paperriduzione,lazaro2009reduction}), for finding martingales associated with the solutions to the equation, which are a generalization of conservation laws of deterministic ODEs (see \cite{noether,Thieullen1997Symmetries}), and for investigating the relation with the Lie symmetries of the associated Fokker-Planck or Kolmogorov equations (\cite{paper2020,gaetarodriguez1}).\\

On the other hand, the integration by parts formula is a cornerstone in the analysis of SDEs and stochastic processes. Its influence extends broadly across probability theory, analysis, quantum field theory, and mathematical finance. The significance of this formula is particularly evident in at least two principal research fields.
First, the integration by parts formula is a key tool for proving the existence and smoothness of densities for the distributions that are solution to SDEs (see, e.g., \cite{nualart}).  In particular, if a process $X_t$ solution to an SDE admits a probability density,
the law (probability distribution) of $X_t$ is not just a discrete or singular measure, but actually has a probability density function  with respect to the Lebesgue measure. 
Smoothness of densities refers to the fact that this probability density function not only exists but is also differentiable (possibly infinitely many times). 

Moreover, integration by parts formulas are at the heart of Malliavin calculus, also known as the stochastic calculus of variations, which provides a probabilistic approach to differentiability on the Wiener space. This framework is essential for sensitivity analysis, hypoellipticity, and to establish integration by parts formulas in infinite-dimensional settings (see, e.g., \cite{bismut,malliavin,nualart}).
In mathematical finance, integration by parts formulas are used to compute Greeks (sensitivities of option prices to parameters) and to develop efficient Monte Carlo methods for derivative pricing (\cite{fournie1999applications,malliavin2006stochastic,privault2007integration}).\\

The Lie symmetry theory applied to the study of SDEs has recently allowed the formulation of a new approach to the Bismut-Malliavin calculus. More precisely,  using the invariance property of the law of the solution process along a given symmetry of the SDE under study, in \cite{paper2023} an analogue of the Bismut-Malliavin quasi-invariance principle was demonstrated. In the paper cited, the derivation of a related integration by parts formula involves symmetries that include spatial diffeomorphisms, deterministic time changes, and probability measure changes via the Girsanov theorem, but leave the Brownian term unchanged.\\

In this paper we investigate the stochastic invariance under rotations of the integration by parts formula introduced in \cite{paper2023} in the framework of Lie symmetry theory for SDEs.
In particular, we introduce a new type of transformation (the random rotation of the original Brownian motion $W$) given by
\begin{equation*} 
dW^\prime_t=B(X_t,t)dW_t,
\end{equation*}
where $B:M\times \R_+ \rightarrow SO(m),$ with $M \subseteq \mathbb{R}^n $ and $SO(m)$ the group of special orthogonal matrices.
This type of transformation was previously introduced in \cite{paper2015} from a slightly different perspective. Indeed, while \cite{paper2015} examines the entire pair $(X,W)$, our focus here is on the diffusion process $X$, motivated by the well-known fact that an Itô diffusion $ X$ can correspond to distinct SDEs that are related by random rotations of the driving Brownian motion.

The inclusion of the Brownian motion rotations among the class of admissible random transformations of an SDE 
of the type
\[
dX_t=\mu(X_t,t)dt+\sigma(X_t,t)dW_t,
\]
implies that the law of a solution $X$ to  $SDE_{\mu,\sigma}$ is no longer identified by coefficients $ (\mu,\sigma)$ but by the whole family $$ ({\mu,\sigma B^{-1}})_{B \in SO(m)}$$ 
that we denote by $\mathcal{Gauge}( SDE_{\mu,\sigma})$.

In other words, if we are interested only in weak solutions to the given SDE, i.e. in diffusion problems, $X$ is no more characterized by the particular $ SDE_{\mu,\sigma}$, but by its associated martingale problem $ (\mu, \sigma\sigma^T)$, i.e. by its infinitesimal generator $$ L=\mu(x,t)\partial_x + \sigma(x,t) \sigma^T(x,t) \partial_{xx}$$ which is as well invariant under the action of $ SO(m)$ (Section 2.2.2). \\

For these reasons, following \cite{paper2015,paper2020,paper2023}, we introduce the concepts of \textit{strong} and \textit{weak} stochastic transformations to mirror the usual distinction between weak and strong solutions to an SDE. 
In particular, strong stochastic transformations leave the Brownian motion invariant 

as it is given \textit{a priori} with the filtered probability space, while  weak ones modify also the Brownian component.

Therefore, in the present paper, we propose a new notion of invariance for SDEs that we call $\mathcal{G}-$invariance.
  
An SDE is $\mathcal{G}-$invariant under the action of a transformation $ T$ if $ T$ preserves the set of weak solutions to the whole family $\mathcal{Gauge}(SDE)$. The related symmetry notion can be characterized in terms of a finite transformation of the SDE coefficients (Theorem \ref{finite_transformation_coefficient}) and by deriving  the new determining equations corresponding to the more powerful infinitesimal transformations (Theorem \ref{new-determining-equation}).

The new concept of invariance can be considered the prototypical example of the more general notion of gauge invariance group proposed in 
 \cite{grandemostro} for semimartingales with jumps.
 Starting from this generalized notion of invariance for SDEs, in this paper we derive a new quasi invariance principle 

as well as the associated integration by parts formula. Furthermore, the $\mathcal{G}-$weak symmetry represents the most general notion of symmetry for which these last two main results just mentioned still hold.   \\

The main result of this paper is the proof of the rotational invariance of the integration by parts formula originally derived in \cite{paper2023}, which reflects the well-known invariance of the infinitesimal generator of an 
SDE under the same transformation (see Theorem \ref{L invariant}).
This is due to the fact that the formula relies on the invariance of the solution law, and this law is determined by its generator $L$. Therefore, a rotation of the driving Brownian motion, which does not affect $L$, preserves both the solution law and the structure of the integration by parts formula.
Although we prove that, despite the rotation of the driving Brownian motion, the integration by parts formula remains unchanged, the formulation of the necessary analytical conditions to rigorously derive the formula requires the discussion of new not trivial terms.\\

Moreover, the inclusion of random rotations of the Brownian motion allows us to provide a new and comprehensive study of invariance properties of the integration by parts formula and their implications for stochastic investigation of the solution to SDEs. For example, in this paper we apply our symmetry approach to the two-dimensional Brownian motion, i.e. to the two-dimensional $ SDE_{0,1} $, obtaining an infinite-dimensional family of symmetries $V_\beta$ (see \eqref{Vb} in Section \ref{Examples}) where $\beta(t)$ is an arbitrary smooth function of time.\\
The symmetry $V_{\beta}$ provides a generalization of the classical Proposition \ref{invarianza per rotazioni}, which ensures that, given $\ B(t)$, a rotation matrix depending on time $\ t$, then  $ W'_t=\int_0^t B(s) dW_s$ is still a $\mathbb{P}-$Brownian motion. This generalization can be obtained by taking advantage of the Girsanov theorem. In particular, considering the process $\tilde{W}_t= B(t)  W_t$, since $W'_{t'}$ is already a $\mathbb{P}$-Brownian motion, $\tilde{W}$ cannot be himself a $\mathbb{P}$-Brownian motion. Nevertheless, since $V_{\beta}$ is a symmetry, by the associated Girsanov transformation we get that $\tilde{W}$ is a $\mathbb{Q}$-Brownian motion, where the probability measure $\mathbb{Q}$ has density with respect to $\mathbb{P}$ given by 
\small $$ \frac{d\mathbb{Q}}{d\mathbb{P}}_{|_{\mathcal{F_T}}}= \exp\Bigg( - \int_0^T B'(s) \cdot W_s dW_s - \frac{1}{2} \int_0^T | B'(s) \cdot W_s|^2 ds \Bigg),$$ \normalsize 
where $B'(t)$ is the matrix obtained by differentiating the entries of the matrix B with respect to time.

Symmetry $\ V_{\beta}$ encodes this invariance property, ensuring that rotating the Brownian motion via$\ B(t)$ yields a solution to the same original SDE, but with respect to the new probability measure $\mathbb{Q}$ (see Remark \ref{invariance mb} for further details). In this sense, it becomes more evident how the theory of symmetries applied to SDEs could be a useful tool for studying invariance properties of stochastic processes and for analyzing their laws. Our general integration by parts formula (see Theorem \ref{teo integrazione per parti 2}) in the particular case of the two-dimensional Brownian motion and along the symmetry $V_{\beta}$ assumes the following form: \begin{multline*}
    0 = \mathbb{E}_{\mathbb{P}} \Big[ F(X_t,Y_t) \int_0^t - Y_s \beta'(s)dW^1_s + X_s \beta'(s) d W^2_s  \Big] \\
    + \mathbb{E}_{\mathbb{P}}\Big[\beta(t) Y_t\partial_x F(X_t,Y_t)- \beta(t)X_t \partial_y F(X_t,Y_t) \Big],
\end{multline*}
where $F$ is a bounded functional with bounded first derivative of the process under study (see Section \ref{Examples}). Furthermore, with appropriate choices of function $\beta(t)$, one can recover from this integration by parts formula well known results in probability theory, such as Stein's lemma and Isserli's theorem (see Remark \ref{remark stein} for further details).\\

The generality of our approach is illustrated through applications to some stochastic models driven by Brownian motion. More specifically we analyze in details the integration by parts formulas which can be rigorously derived for the two-dimensional Brownian motion, an additive perturbation of Brownian motion and the stochastic Lotka-Volterra model.\\

The paper is organized as follows.
In Section \ref{Finite stochastic transformations and symmetries} we illustrate the random rotation of the driving Brownian motion and the related invariance property both of the Brownian motion itself and of the associated SDE.
The set of finite stochastic transformations for an SDE is recalled in Section \ref{Finite_stochastic_transformations_SDE}, together with a detailed description of their algebraic and geometrical structure.
In Section \ref{InvarianceProperties} a novel notion of invariance of SDEs is proposed by introducing the concept of symmetry as a transformation which preserves the law of the solution process.
A more general quasi-invariance principle and all the analytical results, preparatory to the integration by parts formula, are derived and explained in Section \ref{quasi invariance principle}.
In Section \ref{Rotational invariante formula} the rotational invariant integration by parts formula is obtained and three notable examples are carefully discussed in Section \ref{Examples}.\\

Throughout this paper, the Einstein summation convention for repeated indices will be used.

\section{SDE invariance under random rotation of the driving Brownian motion}\label{Finite stochastic transformations and symmetries}
In this section we specify the notion of weak solution to an SDE according to the martingale problem formulation \cite{stroock_varadhan_1979} and  we introduce the random rotation of the driving Brownian motion as proposed in some recent papers (\cite{paper2015},\cite{paper2020}).
We finally discuss the fact that the random rotation of Brownian motion can be viewed as a particular case of a more general notion of symmetry, called gauge symmetry, introduced in \cite{albeverio}.
\subsection{It\^o diffusions }
We provide a brief overview of the martingale problem formulation associated with an Itô-type SDE and its connection to the concepts of strong and weak solutions.
Let us denote with $M^p[0,T]$ the space of the equivalence classes of the real processes $Y$ defined in some filtered probability space $(\Omega, \mathcal{F}, \mathcal{F}_t, \mathbb{P}),$ taking values on an open subspace $M \subseteq \R^n$, progressively measurable and such that
\[
\mathbb{E}\Bigg[\int_0^T|Y_s|^p ds\Bigg]<\infty.
\]
Speaking of equivalence classes, we mean that we identify two processes $Y$ and $Y^\prime$ if $Y_s=Y^\prime_s$ for almost every $s$, with probability one.

\begin{definizione}[Weak and strong solutions]\label{weak and strong solutions}
   Let us fix a finite time horizon $\ T >0.$  Given $\ s \rightarrow \mu(X_s,s)=(\mu_i(X_s,s))_{i=1,...,n}$ and $\ s \rightarrow \sigma(X_s,s)=(\sigma_{ij}(X_s,s))_{i=1,...,n \ \ j=1,...,m}$ processes in $\ M^1[0,T]$, $\ M^2[0,T]$ respectively, given $\ x \in \mathbb{R}^n,$ we say that the $ SDE_{\mu,\sigma}$
    \begin{equation}\label{SDE}
        dX_t=\mu(X_t,t)dt + \sigma(X_t,t) dW_t, \quad X_0=x
    \end{equation}
    admits $ X$ as weak solution  if  there exist a filtered probability space  $\ (\Omega, \mathcal{F}, \mathcal{F}_t, \mathbb{P})$ and an $\mathcal{F}_t$-Brownian motion $ W$ taking values in $\mathbb{R}^m$ such that, for any $ t \in [0,T],$
    \begin{equation}\label{sol}  
     X^i_t-x^i=\int_0^t \mu_i(X_s,s)ds + \int_0^t \sigma_{ij}(X_s,s) dW^j_s.
    \end{equation}
    In this case we say that $ X$ is a weak solution to $SDE_{\mu,\sigma}$ driven by $ W$. \\
Differently, we say that \eqref{SDE} admits $X$ as  strong solution if \eqref{sol} is satisfied for every filtered probability space $(\Omega, \mathcal{F},\mathcal{F}_t,\mathbb{P})$ and for every $\mathcal{F}_t-$Brownian motion $W$.
\end{definizione}
\begin{osservazione}
If $X$ is a strong solution, the filtered probability space and the version of the Brownian motion are given a priori, and the solution $X_t$ is then constructed accordingly, ensuring that it is adapted to the filtration generated by the given Brownian motion. On the other hand, if $ X$ is a weak solution, only the functions $\mu$ and $\sigma$ are specified in advance, and we seek a pair $ (X,W)$ along with a filtered probability space such that $ X$ is adapted to the filtration and 
$W$ is a Brownian motion w.r.t. it. In this case, $ X$ does not necessarily have to be adapted to the filtration generated by $ W.$ A strong solution is of course also a weak solution, but the converse is not true in general (see \textit{Tanaka equation} in \cite{oksendal}).
\end{osservazione}
We introduce the definition of the \textit{martingale problem} associated with an SDE.
\begin{definizione}
Let $\mu,\sigma$ be previsible path functionals and $ x \in \mathbb{R}^n.$ We say that $ X$ is a solution to the martingale problem $\ (\mu,\sigma\sigma^T)$ starting at $ x$ if $P^x(X_0=x)=1$ and for any $ f \in C_0^{\infty}(\mathbb{R}^n \times \mathbb{R}_+)$
\[ f(X_t,t)-f(x,0)-\int_0^t L_t(f(X_s,s))ds\]
is a martingale w.r.t. $\mathcal{H}^n_t=\sigma(X_s, s \leq t)$, where
\[ L_t= \partial_t + L= \partial_t + \mu \partial + \frac{1}{2} \sigma \sigma^T \partial^2.\]
\end{definizione}
\begin{osservazione}
    If $ X$ is a weak solution to SDE \eqref{SDE} driven by $ W$, then, applying \textit{It\^o formula}, $\forall f \in C_0^{\infty}(\mathbb{R}^n\times \mathbb{R}_+ )$
    \[ f(X_t,t)-f(X_0,0)-\int_0^t L_t(f(X_s,s))ds = \int_0^t D(f(X_s,s))^T \sigma(X_s,s) dW_s\]
    is a martingale and hence $ X$ is also a solution to the martingale problem $ (\mu,\sigma\sigma^T)$ starting at $X_0$.  The converse to this observation is an important result in the study of martingale problems for diffusion processes proved by Stroock and Varadhan  (
    \cite{stroock_varadhan_1979}). So, the previous  formulation of the martingale problem is equivalent to the weak solution one.
\end{osservazione}

\begin{osservazione}\label{autonome non autonome}
 In the following, we will deal with functions defined on $ \mathbb{R}^n \times \mathbb{R}_+  $, that is, depending both on the stochastic process $ X_t$ and the time $ t$. It is always possible to reduce this situation to the autonomous case, using a standard trick. Exploiting the isomorphism between $\mathbb{R}^n\times \mathbb{R}$ and $\mathbb{R}^{n+1}$, we can consider only autonomous functions just working in a higher dimension: we can pass from $ f(x,t)$ defined for $ (x,t) \in \mathbb{R}^n\times \mathbb{R}_+ $ to $  f(\tilde{x})$ defined for $\tilde{x}\in \mathbb{R}^{n+1}$.
\end{osservazione}

\subsection{Random rotations of Brownian motion}\label{Random rotation of Brownian motion}
Stochastic transformations composed of triplets, encompassing  time changes, diffeomorphisms, and measure changes, were introduced in \cite{paper2020} and used in \cite{paper2023} to propose a new integration by parts formula inspired by the Bismut approach to Malliavin calculus. Here, we aim to introduce in the same setting a novel type of transformation: the random rotation of Brownian motion.

In particular, since an Itô diffusion $X$ can correspond to distinct SDEs that are related by random rotations of the Brownian motion we need to slightly modify the definition of invariance and symmetry proposed in \cite{paper2015}. These broader definitions will also clarify the main result of this paper: the rotational invariance of the integration by parts formula originally derived in \cite{paper2023}.
\\
\noindent
 Before examining the action of these transformations on an SDE, we recall the important (random) invariance property of the Brownian motion (see \cite{paper2015}).
 
 \begin{proposizione}\label{invarianza per rotazioni}
Let $\tilde M$ be an open subset of $\mathbb{R}^n$ and $M=\tilde M \times \mathbb{R}_+ $. Let $ B: M \rightarrow SO(m)$
be a smooth function and let $ X$ be a weak solution to $SDE_{\mu,\sigma}$ driven by $ W$. Let $W'$ be the solution to
\begin{equation*} 
d W'_t = B(X_t,t) d W_t, 
\end{equation*}
then $ W'$ is still a Brownian motion. \end{proposizione}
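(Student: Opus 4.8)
The plan is to prove the statement via \emph{Lévy's characterization theorem}. First I would check that the process
\[
W'_t=\int_0^t B(X_s,s)\,dW_s
\]
is well defined as an Itô integral with respect to the $\mathcal{F}_t$-Brownian motion $W$: since $X$ is a weak solution it is progressively measurable, and $B$ is smooth (hence continuous), so $s\mapsto B(X_s,s)$ is progressively measurable; moreover $B$ takes values in $SO(m)$, so every entry of $B(X_s,s)$ is bounded in absolute value by $1$. Hence the stochastic integral exists, $W'$ is a continuous square-integrable $\mathcal{F}_t$-martingale (in particular a continuous $\mathcal{F}_t$-local martingale), adapted to the same filtration, and $W'_0=0$.

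Next I would compute the matrix of quadratic covariations of the components of $W'$. Using the covariation rules for stochastic integrals against the independent components $W^1,\dots,W^m$ of $W$, for $i,j\in\{1,\dots,m\}$ one gets
\[
\langle W'^i,W'^j\rangle_t=\int_0^t B_{ik}(X_s,s)\,B_{jk}(X_s,s)\,ds=\int_0^t \big(B(X_s,s)B(X_s,s)^T\big)_{ij}\,ds .
\]
This is precisely the point where the hypothesis $B\in SO(m)$ enters: since $B(X_s,s)B(X_s,s)^T=I_m$ pointwise in $(s,\omega)$, the right-hand side equals $\delta_{ij}\,t$. Thus $\langle W'^i,W'^j\rangle_t=\delta_{ij}t$.

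Finally, Lévy's characterization theorem asserts that a continuous $\mathcal{F}_t$-local martingale $W'$ with $W'_0=0$ and $\langle W'^i,W'^j\rangle_t=\delta_{ij}t$ is an $m$-dimensional $\mathcal{F}_t$-Brownian motion, which concludes the proof. The argument is essentially routine; the only step requiring a little care is the well-posedness of the integral $\int_0^t B(X_s,s)\,dW_s$, which is guaranteed by the uniform boundedness of orthogonal matrices together with the progressive measurability inherited from $X$ — note that no further integrability of $\mu$ or $\sigma$ beyond that already assumed is needed, since the rotation does not involve the coefficients of the SDE. The substantive content, namely that rotating $W$ by an $SO(m)$-valued (even random and adapted) process leaves its law invariant, is encoded entirely in the identity $BB^T=I_m$, which makes the quadratic variation coincide with that of a standard Brownian motion.
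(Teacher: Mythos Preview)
Your argument is correct and is the standard proof via L\'evy's characterization: well-posedness of the stochastic integral follows from boundedness of $SO(m)$-valued integrands and progressive measurability, while $BB^T=I_m$ gives $\langle W'^i,W'^j\rangle_t=\delta_{ij}t$. The paper does not actually supply a proof of this proposition; it merely recalls the result from \cite{paper2015}, so your write-up is entirely in line with (indeed, more explicit than) what the paper provides.
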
 
The previous Proposition \ref{invarianza per rotazioni} allows us to give the following definition. 
\begin{definizione}\label{definizione rotazione browniano} Let $W$ be an $ (\Omega, \mathcal{F}, \mathcal{F}_t, \mathbb{P})$ $ m-$dimensional Brownian motion. A random rotation of $ W$ is a smooth function $ B: M  \rightarrow SO(m)$ leading from $ W$ to $ W'$ in such a way that the following equation is satisfied: \begin{equation*} d W'_t= B(X_t,t) dW_t. \end{equation*} \end{definizione}
Finally, we need to investigate how the coefficients of an SDE change after the action of a transformation of its driving Brownian motion: 
\begin{proposizione}\label{coefficienti mb} 
Let $ B: M  \rightarrow SO(m)$ be a smooth function and let $ X$ be a weak solution to $SDE_{\mu,\sigma}$ driven by $W$. Let $ W'$ be the solution to \begin{equation*} d W'_t = B(X_t,t) d W_t, \end{equation*} then $ X$ is also a weak solution to $ SDE_{\mu',\sigma'}$ driven by $W'$, with 
\begin{equation*} \mu' = \mu; \ \ \ \  \sigma'= \sigma  B^{-1}. 
\end{equation*} 
\end{proposizione}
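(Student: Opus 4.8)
The plan is to combine Proposition \ref{invarianza per rotazioni} with the elementary algebraic fact that a matrix in $SO(m)$ is invertible with inverse equal to its transpose, together with the substitution (associativity) rule for Itô integrals. First I would invoke Proposition \ref{invarianza per rotazioni} on the same filtered probability space $(\Omega,\mathcal{F},\mathcal{F}_t,\mathbb{P})$ carrying $X$ and $W$: this immediately yields that $W'$, defined by $dW'_t=B(X_t,t)\,dW_t$ with $W'_0=0$, is an $\mathcal{F}_t$-Brownian motion taking values in $\mathbb{R}^m$. Thus $W'$ is a legitimate driving process in the sense of Definition \ref{weak and strong solutions}, and it remains only to rewrite \eqref{sol} in terms of $W'$.

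Next I would exploit that $B(x,t)\in SO(m)$ for every $(x,t)\in M$, so $B(x,t)$ is invertible with $B^{-1}(x,t)=B^T(x,t)$, and $(x,t)\mapsto B^{-1}(X_t,t)$ is again progressively measurable and (being orthogonal-valued, hence bounded entrywise) locally bounded. The key identity is the substitution rule for stochastic integrals: since $W'_t=\int_0^t B(X_s,s)\,dW_s$, for any progressively measurable matrix-valued integrand $H$ for which the integrals make sense one has $\int_0^t H_s\,dW'_s=\int_0^t H_s B(X_s,s)\,dW_s$. Applying this with $H_s=\sigma(X_s,s)B^{-1}(X_s,s)$ gives
\[
\int_0^t \sigma(X_s,s)B^{-1}(X_s,s)\,dW'_s=\int_0^t \sigma(X_s,s)B^{-1}(X_s,s)B(X_s,s)\,dW_s=\int_0^t \sigma(X_s,s)\,dW_s .
\]
Substituting this back into \eqref{sol} yields, componentwise,
\[
X^i_t-x^i=\int_0^t \mu_i(X_s,s)\,ds+\int_0^t \big(\sigma B^{-1}\big)_{ij}(X_s,s)\,dW'^{\,j}_s ,
\]
which is exactly the statement that $X$ is a weak solution to $SDE_{\mu',\sigma'}$ driven by $W'$ with $\mu'=\mu$ and $\sigma'=\sigma B^{-1}$.

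Finally I would check that the coefficients $(\mu',\sigma')$ lie in the required spaces $M^1[0,T]$ and $M^2[0,T]$: $\mu'=\mu\in M^1[0,T]$ trivially, while for $\sigma'=\sigma B^{-1}$ one uses that multiplication on the right by an orthogonal matrix preserves the Frobenius norm, so $|\sigma'(X_s,s)|=|\sigma(X_s,s)|$ pathwise and hence $\mathbb{E}\big[\int_0^T|\sigma'(X_s,s)|^2\,ds\big]=\mathbb{E}\big[\int_0^T|\sigma(X_s,s)|^2\,ds\big]<\infty$. There is no serious obstacle here; the only point requiring a little care is the justification of the substitution rule $\int H\,dW'=\int HB\,dW$ (i.e.\ the associativity of the Itô integral with respect to the stochastic integrator $W'$) and, relatedly, the observation that no Itô correction term appears because $B$ and $B^{-1}$ are multiplied pathwise against the differentials rather than integrated. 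I would state this substitution step explicitly, citing the standard associativity property of stochastic integrals, and then the rest is the bookkeeping above.
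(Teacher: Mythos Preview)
Your proposal is correct and follows essentially the same approach as the paper: invoke Proposition~\ref{invarianza per rotazioni} to ensure $W'$ is a Brownian motion, then insert $B^{-1}B$ in the diffusion term and regroup via the associativity of the It\^o integral. Your version is simply more explicit (writing things in integral rather than differential form, justifying the substitution rule, and checking $\sigma B^{-1}\in M^2[0,T]$ via the Frobenius-norm invariance), whereas the paper compresses all of this into a single differential identity.
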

\begin{proof}
By the invariance property under rotations, $W'$ is a Brownian motion (Proposition \ref{invarianza per rotazioni} ). Moreover, we have that
\begin{equation*} d X_t = \mu dt + \sigma dW_t = \mu' dt + \underbrace{\sigma B^{-1}}_{\sigma'} B dW_t =\mu' dt + \sigma'  \underbrace{B  dW_t}_{dW'_t} = \mu' dt + \sigma' dW'_t, 
\end{equation*} 
\end{proof}

\subsubsection{The prototypical
 example of gauge symmetry}\label{{The prototypical
 example of gauge symmetry}}
The invariance of Brownian motion with respect to (random) rotations has a deep consequence in the definition of Lie symmetries of Brownian motion driven SDEs. Indeed, if $X$ is a diffusion solution to an $ SDE_{\mu,\sigma}$ driven by $ W$ \[ dX_t=\mu(X_t,t)dt+\sigma(X_t,t)dW_t,\]
and $ B: M  \rightarrow SO(m)$ 
 is a predictable process, then $X$ is also a weak solution to the $SDE_{\mu,\sigma B^{-1}}$ driven by $ W_t'=\int_0^t B(X_s,s) dW_s$, i.e.
\[ dX_t=\mu(X_t,t)dt+(\sigma(X_t,t) B^{-1}(X_t,t)) [B(X_t,t) dW_t],\]
 which is a well-defined SDE since, by Proposition \ref{invarianza per rotazioni}, the driving process $[ B(X_t,t) dW_t]$ is still a Brownian motion.\\ 
 The inclusion of the Brownian motion rotations among the class of admissible transformations has as a consequence that the law of a weak solution $X$ to a Brownian motion driven $SDE_{\mu,\sigma}$ is no longer identified by coefficients $ (\mu,\sigma)$ but by the whole family $ ({\mu,\sigma B^{-1}})_{B \in SO(m)}$ related by a rotation of the Brownian component. In other words, if we are interested only in weak solutions to SDEs, i.e. in diffusion problems, $X$ is not characterized by $ SDE_{\mu,\sigma}$, but by its associated martingale problem $ (\mu, \sigma\sigma^T)$, that is, by its infinitesimal generator $ L=\mu\partial + \frac12\sigma \sigma^T \partial^2$.\\
Notice also that, unlike the other transformations studied in \cite{paper2023}, the rotation of the Brownian motion acts only on the Brownian component. Therefore, since we are interested in defining symmetries as transformations that preserve the set of weak solutions to a given SDE, transformations consisting of the rotation of Brownian motion are always symmetries. \\
Indeed, this fact provides the prototypical example of a notion of symmetry for a general class of  SDEs introduced by \cite{albeverio}:
\begin{definizione}
    Let $ Z$ be a semimartingale defined on a Lie group $ N$ with respect to a given filtration $\mathcal{F}_t$. Given a topological group $\mathcal{G}$ with action on $ N$ \[\Xi: \mathcal{G} \times N \rightarrow N,\]
    we say that $Z$ admits $\mathcal{G}$, with action $\Xi_g$ and w.r.t the filtration $\mathcal{F}_t$, as a gauge symmetry group if, for any $\mathcal{F}_t-$predictable locally bounded process $ G_t$ taking values in $\mathcal{G}$, the semimartingale $\tilde{Z}$ solution to the equation $ d\tilde{Z}_t=\Xi_{G_t}(dZ_t)$ has the same law as $Z$.
\end{definizione}
It is easy to verify that, taking $ N=\mathbb{R}^m$ and $Z=W$ in the previous definition, $\mathcal{G}=SO(m)$ is  a gauge symmetry group for Brownian motion.  
 
Moreover, since the set of diffusions related to a given infinitesimal generator $ L$ is preserved under the action of a gauge transformation, a weak symmetry may not transform a given SDE into itself, but can modify the Brownian part of the SDE by performing a random rotation (i.e., a gauge transformation). Thus, it is convenient to introduce the following definition.
\begin{definizione}\label{definizione gauge SDE}
    Given a Brownian motion driven $ SDE_{\mu,\sigma}$, we denote by $\mathcal{ Gauge}(SDE_{\mu,\sigma})$ the set of different  SDEs obtained from $ SDE_{\mu,\sigma}$ applying the action of the Gauge symmetry group $\ SO(m)$, i.e.
    \[ \mathcal{Gauge}(SDE_{\mu,\sigma})=(SDE_{\mu,\sigma B^{-1}})_{B \in SO(m).} \]
\end{definizione}

\begin{osservazione}\label{osservazione gauge SDE}
    We already noticed that if $X$ is a diffusion related to $SDE_{\mu,\sigma}$, then $X$ is also a diffusion related to the whole $\mathcal{Gauge}(SDE_{\mu,\sigma}).$
\end{osservazione}

\subsubsection{Invariance of the infinitesimal generator under random rotations}\label{Invarian under random rotations}
It is well-known that the canonical Markov process associated with a given $SDE_{\mu,\sigma}$, under suitable regularity conditions for the coefficients, is a diffusion with generator 
\begin{equation}\label{L}
 L= \mu_i(x,t) \partial_{x_i} + \frac{1}{2}  a_{ij}(x,t) \partial_{x_i x_j},
\end{equation}
where $ a(x,t)=\sigma(x,t)\sigma(x,t)^T.$ 
Conversely, given a differential operator $ L$ of the form \eqref{L}, where $ a(x,t) \in Mat(n,n)$ is a positive semidefinite matrix, it becomes natural to inquire about the existence of a diffusion process associated with $ L$, and about its uniqueness. Concerning existence, the answer to the problem is positive \textit{provided} there exists a matrix field $\sigma(x,t)$, called square root of $ a$, such that $\sigma(x,t)\sigma(x,t)^T=a(x,t)$ and that $\sigma$ and $\mu$ satisfy the assumptions which ensure the existence of a solution to the SDE (e.g. joint misurability, Lipschitz continuity and sublinear growth in $x$, see,e.g., \cite{baldi}). The answer about uniqueness is more important for the development of the current paper and deserves a more articulated investigation, that we briefly recall from \cite{baldi}. 

\begin{proposizione}\label{L invariant}
    Given a differential operator \eqref{L}
        with $ a(x,t) \in Mat(n,n)$ positive semidefinite, if there exist $\sigma, \tilde{\sigma} \in Mat(n,m)$ such that for every $ (x,t) \in \mathbb{R}^n \times [0,T]$
    \[ \sigma(x,t) \sigma(x,t)^T=a(x,t)=\tilde{\sigma}(x,t) \tilde{\sigma}(x,t)^T,\]
   then there exists an orthogonal matrix $\rho(x,t) \in O(m)$ such that
    \[ \tilde{\sigma}=\sigma \rho.\]
   
\end{proposizione}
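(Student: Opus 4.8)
The plan is to prove the statement pointwise in $(x,t)$: no regularity is claimed for the assignment $(x,t)\mapsto\rho(x,t)$, so it suffices to fix the matrices $\sigma,\tilde\sigma\in\mathrm{Mat}(n,m)$ with $\sigma\sigma^T=a=\tilde\sigma\tilde\sigma^T$ and exhibit one $\rho\in O(m)$ with $\tilde\sigma=\sigma\rho$. The key observation I would start from is the polarization identity: for all $u,v\in\mathbb{R}^n$,
\[
\langle\sigma^Tu,\sigma^Tv\rangle=\langle\sigma\sigma^Tu,v\rangle=\langle au,v\rangle=\langle\tilde\sigma\tilde\sigma^Tu,v\rangle=\langle\tilde\sigma^Tu,\tilde\sigma^Tv\rangle,
\]
so the transposed maps $\sigma^T,\tilde\sigma^T\colon\mathbb{R}^n\to\mathbb{R}^m$ induce the same symmetric bilinear form on $\mathbb{R}^n$. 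Taking $u=v$ gives $|\sigma^Tu|=|\tilde\sigma^Tu|$ for every $u$, hence $\ker\sigma^T=\ker\tilde\sigma^T=:K$.

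Next I would build an orthogonal matrix out of this coincidence of forms. Since $\sigma^T$ and $\tilde\sigma^T$ share the kernel $K$, the rule $\sigma^Tu\mapsto\tilde\sigma^Tu$ is a well-defined linear bijection $\rho_0\colon\mathrm{im}\,\sigma^T\to\mathrm{im}\,\tilde\sigma^T$, and by the displayed identity it preserves inner products, i.e. it is a linear isometry between these two subspaces of $\mathbb{R}^m$. In particular $\dim\mathrm{im}\,\sigma^T=\dim\mathrm{im}\,\tilde\sigma^T=r:=\mathrm{rank}\,a$, so the orthogonal complements $(\mathrm{im}\,\sigma^T)^\perp$ and $(\mathrm{im}\,\tilde\sigma^T)^\perp$ have the same dimension $m-r\ge0$; I fix an arbitrary linear isometry between them and let $\rho_1\in O(m)$ be the orthogonal operator on $\mathbb{R}^m$ that restricts to $\rho_0$ on $\mathrm{im}\,\sigma^T$ and to this chosen isometry on the complement. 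Then $\rho_1\sigma^Tu=\tilde\sigma^Tu$ for all $u$ (both sides vanish on $K$), that is $\rho_1\sigma^T=\tilde\sigma^T$; transposing yields $\sigma\rho_1^T=\tilde\sigma$, and $\rho:=\rho_1^T\in O(m)$ is the desired matrix.

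The only place requiring care — and the reason one cannot simply invert $\sigma$ to solve $\tilde\sigma=\sigma\rho$ — is the rank-deficient case $r<m$: one must check that the two square roots of $a$ share the same kernel, which is exactly what the polarization identity delivers, and then extend the partial isometry $\rho_0$ to a genuine element of $O(m)$, which is possible precisely because the two orthogonal complements have equal dimension $m-r$. If $r=m$ the extension step is vacuous and $\rho_1=\rho_0$ already. An alternative I would mention but not carry out is to argue through the singular value decompositions of $\sigma$ and $\tilde\sigma$: both have singular values equal to the square roots of the eigenvalues of $a$, and once the left singular frames are chosen to diagonalize $a$ in the same way, the two factorizations differ only by an orthogonal change of the right singular frame, which supplies $\rho$ after the same bookkeeping with repeated eigenvalues and the zero block.
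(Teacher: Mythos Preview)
Your argument is correct: the polarization identity gives $\ker\sigma^T=\ker\tilde\sigma^T$ and makes $\rho_0$ a well-defined isometry $\mathrm{im}\,\sigma^T\to\mathrm{im}\,\tilde\sigma^T$, and the extension of $\rho_0$ to an element of $O(m)$ is possible exactly because the two complements have dimension $m-r$. The transposition step $\rho_1\sigma^T=\tilde\sigma^T\Rightarrow\tilde\sigma=\sigma\rho_1^T$ is clean.

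As for comparison: the paper does not give its own proof but simply refers to \emph{Lemma 9.4} in Baldi's book, so your write-up is in fact more complete than what appears in the paper. The argument you present (build a partial isometry between the ranges of $\sigma^T$ and $\tilde\sigma^T$, extend it to $O(m)$) is the standard linear-algebraic route and is essentially the content of the cited lemma; your alternative via the singular value decomposition is another common packaging of the same idea. One minor cosmetic point: your claim $r=\mathrm{rank}\,a$ uses the standard fact $\mathrm{rank}(\sigma\sigma^T)=\mathrm{rank}(\sigma)$, which you might state explicitly for completeness.
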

\begin{proof}
    See \textit{Lemma 9.4} in \cite{baldi}.
\end{proof}
Let us define a  stochastic process on the trajectory space  $\Omega:=\{\omega: \R^+ \rightarrow M\}, $ and  let us introduce the canonical process  as a coordinate process, i.e. given by $X_t(\omega)=\omega(t)$. In the case of processes which are solutions to  Brownian motion driven SDEs, as in our case, one considers as canonical space $\Omega=\mathcal{C}([0,T], M),$ equipped with the natural filtration.
\begin{corollario}
  Given a differential operator \eqref{L}
        with $ a(x,t) \in Mat(n,n)$ positive semidefinite,  provided there exists $\sigma $ such that $\sigma \sigma^T=a$, with $\mu$ and $\sigma$ satisfying the usual conditions, then on the canonical space $ (\mathcal{C}, \mathcal{M}, (\mathcal{M}_t)_t, (X_t)_t)$ there exists a unique family of probabilities $ (P^x)_x$ such that $(\mathcal{C}, \mathcal{M}, (\mathcal{M}_t)_t, (X_t)_t, (P^x)_x)$\footnote{$(\mathcal{C}, \mathcal{M}, (\mathcal{M}_t)_t, (X_t)_t, (P^x)_x)$ denotes the canonical process, where $X_t$ is the coordinate process on the path space $\mathcal{C}$, equipped with its natural filtration and induced probability measures $(P^x)_x$ for the process starting from $x$. The canonical space provides a general, abstract setting to define stochastic processes and their laws  without reference to a specific probability space. See, e.g, \cite{baldi} for a detailed discussion.} is the realization of a diffusion process associated with $ L$.
\end{corollario}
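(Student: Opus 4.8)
The plan is to split the statement into an existence part and a uniqueness part, and to reduce both to classical facts about SDEs and martingale problems, with Proposition \ref{L invariant} absorbing the non-uniqueness of the square root $\sigma$ and Proposition \ref{invarianza per rotazioni} turning rotated Brownian increments back into Brownian increments.

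For existence, I would first fix an auxiliary filtered probability space $(\Omega_0,\mathcal F^0,\mathcal F^0_t,\mathbb P_0)$ carrying an $m$-dimensional Brownian motion $W$ and, for each $x\in M$, solve $SDE_{\mu,\sigma}$ with initial datum $x$. Since $\mu$ and $\sigma$ are jointly measurable, Lipschitz and of sublinear growth in $x$, the standard Picard iteration produces a strong — hence a fortiori weak — solution $X^x$, unique up to indistinguishability, with $x\mapsto X^x$ measurable. Pushing forward the law of $X^x$ along the coordinate map gives a probability $P^x$ on $\mathcal C([0,T],M)$. By the Remark following It\^o's formula, $X^x$, and therefore the coordinate process under $P^x$, solves the martingale problem $(\mu,\sigma\sigma^T)=(\mu,a)$ starting at $x$, i.e.\ it is a diffusion with generator $L_t=\partial_t+L$; the flow property of the solution map together with uniqueness gives the Markov property and the consistency of the family $(P^x)_x$.

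For uniqueness, suppose $(Q^x)_x$ is another family on the canonical space under which the coordinate process is a diffusion with generator $L$. Fix $x$ and work under $Q^x$: then $M_t:=X_t-x-\int_0^t\mu(X_s,s)\,ds$ is a continuous local martingale with $\langle M^i,M^j\rangle_t=\int_0^t a_{ij}(X_s,s)\,ds$. By the martingale representation theorem for continuous martingales with absolutely continuous covariation — here one may need to enlarge $\Omega$ with an independent Brownian motion to compensate the directions in which $a$ degenerates — there exist an $m$-dimensional Brownian motion $\widehat W$ and a measurable square root $\widehat\sigma$ of $a$ with $M_t=\int_0^t\widehat\sigma(X_s,s)\,d\widehat W_s$. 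By Proposition \ref{L invariant} there is $\rho\in O(m)$, measurable in $(x,t)$, with $\widehat\sigma=\sigma\rho$; hence $M_t=\int_0^t\sigma(X_s,s)\,dW'_s$ where $W'_t:=\int_0^t\rho(X_s,s)\,d\widehat W_s$ is again a Brownian motion by Proposition \ref{invarianza per rotazioni}. Thus, under $Q^x$, the coordinate process is a weak solution to $SDE_{\mu,\sigma}$. Since $\mu,\sigma$ satisfy the usual conditions, $SDE_{\mu,\sigma}$ has pathwise uniqueness, which by Yamada--Watanabe upgrades to uniqueness in law; therefore $Q^x=P^x$, and letting $x$ range over $M$ gives uniqueness of the family.

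The main obstacle is the martingale representation step in the degenerate case: because $a$ need not be invertible one cannot simply set $\widehat W_t=\int_0^t\widehat\sigma^{-1}\,dM_s$, and a careful construction on an enlarged space is required; this, together with the fact that the representing square root is only canonical up to an orthogonal gauge, is precisely where Proposition \ref{L invariant} and the rotational invariance of Brownian motion enter. All of these ingredients are classical and, in the form needed here, can be found in \cite{baldi} and \cite{stroock_varadhan_1979}.
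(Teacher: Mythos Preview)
The paper does not give an independent proof of this corollary: it simply refers the reader to Theorem~9.10 in \cite{baldi}. Your sketch is therefore not ``the same approach as the paper'' in any meaningful sense --- you are actually reconstructing, in outline, the standard argument that lies behind that reference: build weak solutions from the SDE under the usual Lipschitz/growth conditions to get existence of $(P^x)_x$; for uniqueness, represent any solution of the martingale problem as a weak solution of an SDE with \emph{some} square root of $a$, gauge it back to the chosen $\sigma$ via an orthogonal rotation, and conclude by Yamada--Watanabe. This is correct in spirit and is essentially what one finds in \cite{baldi} and \cite{stroock_varadhan_1979}.

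Two small points are worth flagging. First, Proposition~\ref{L invariant} only yields $\rho(x,t)\in O(m)$, whereas Proposition~\ref{invarianza per rotazioni} as stated in the paper is formulated for $B$ taking values in $SO(m)$; the L\'evy-characterization argument behind it works verbatim for $O(m)$, but strictly speaking you are invoking a slightly stronger statement than the one the paper provides. Second, the measurable selection of a square root $\widehat\sigma$ of $a$ and of the orthogonal gauge $\rho$ is a genuine (though classical) technical step that you pass over; since the paper itself only cites \cite{baldi} here, this is acceptable at the same level of detail, but it is where the real work hides.
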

\begin{proof} See Theorem 9.10 in \cite{baldi}.
\end{proof}
\noindent
Recalling the rotational invariance of Brownian motion, Proposition \ref{L invariant} can be re-stated in the following equivalent way, which is  more useful for our purposes.
\begin{proposizione}\label{Invariance under random rotations of the infinitesimal generator}[Invariance under random rotations of the infinitesimal generator]

Consider two stochastic differential equations $SDE_{\mu,\sigma}$ and $ SDE_{\mu',\sigma'}$ with infinitesimal generator $L$ and  $ L'$, respectively.     The two  SDEs differ by a random rotation of the Brownian part (there exists $ B\in O(m)$ such that $\sigma'=\sigma B^{-1})$ if and only if they share the same infinitesimal generator ($ L=L'$).
\end{proposizione}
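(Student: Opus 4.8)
The plan is to prove the two implications separately, using the characterization of the infinitesimal generator in terms of the diffusion matrix $a = \sigma\sigma^T$ together with Proposition \ref{L invariant}.

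\textbf{($\Rightarrow$)} Suppose $\sigma' = \sigma B^{-1}$ for some $B \in O(m)$. Then I would compute directly that
\[
a' = \sigma'(\sigma')^T = \sigma B^{-1}(B^{-1})^T \sigma^T = \sigma B^{-1} B \sigma^T = \sigma\sigma^T = a,
\]
where I used $B^{-1} = B^T$ for orthogonal $B$, hence $(B^{-1})^T = (B^T)^T = B$ and $B^{-1}B = I$. Since the drift coefficients coincide ($\mu' = \mu$, as random rotations leave the drift unchanged by Proposition \ref{coefficienti mb}) and the second-order coefficients are $\tfrac12 a' = \tfrac12 a$, the generators \eqref{L} are identical, i.e. $L = L'$.

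\textbf{($\Leftarrow$)} Conversely, suppose $L = L'$. Comparing the first-order terms in \eqref{L} gives $\mu = \mu'$, and comparing the second-order terms gives $a(x,t) = a'(x,t)$ for all $(x,t)$, that is, $\sigma\sigma^T = \sigma'(\sigma')^T$. Now I invoke Proposition \ref{L invariant} with $\tilde\sigma := \sigma'$: since both $\sigma$ and $\sigma'$ are square roots of the same positive semidefinite matrix field $a$, there exists an orthogonal matrix field $\rho(x,t) \in O(m)$ with $\sigma' = \sigma\rho$. Setting $B := \rho^{-1} \in O(m)$ yields $\sigma' = \sigma B^{-1}$, which is exactly the statement that the two SDEs differ by a random rotation of the Brownian part. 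One should also note that $W'_t := \int_0^t B(X_s,s)\, dW_s$ is genuinely a Brownian motion by Proposition \ref{invarianza per rotazioni}, so that $SDE_{\mu',\sigma'}$ is a well-posed equation driven by $W'$ and $X$ solves it; this is the content already recorded in Proposition \ref{coefficienti mb} and Remark \ref{osservazione gauge SDE}.

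The only genuine subtlety — the ``main obstacle'' — is measurability/regularity of the rotation field $\rho$ produced by Proposition \ref{L invariant}: the pointwise existence of an orthogonal $\rho(x,t)$ for each $(x,t)$ is immediate, but to legitimately write $W'_t = \int_0^t B(X_s,s)\,dW_s$ as a stochastic integral one needs $(x,t) \mapsto \rho(x,t)$ to be (at least) jointly measurable and locally bounded along the paths of $X$. Under the standing smoothness hypotheses on $\sigma$ and $\sigma'$ (and since $O(m)$ is compact, so local boundedness is automatic) this can be arranged; I would point to the measurable-selection argument underlying \emph{Lemma 9.4} of \cite{baldi}, or simply restrict to the smooth setting in which the paper already works, where $\rho$ inherits the required regularity. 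With that caveat handled, both implications are complete.
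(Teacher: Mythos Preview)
Your proof is correct and follows essentially the same route as the paper: compute $\sigma'(\sigma')^T=\sigma\sigma^T$ from orthogonality for the forward direction, and invoke Proposition~\ref{L invariant} to obtain the orthogonal $\rho$ (then set $B=\rho^{-1}$) for the converse. The paper's argument is actually terser than yours---it does not pause on the drift identification or on the measurability/regularity of the rotation field $\rho$---so your additional remarks are extra caution rather than a different strategy.
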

\begin{proof}
    If $\sigma'=\sigma B^{-1}$, then, thanks to the orthogonality of $ B$,
    \[L'=\mu_i(x,t) \partial_i + \frac{1}{2} a_{ij}(x,t) \partial_{ij}=\mu_i(x,t) \partial_i + \frac{1}{2} (\sigma' \sigma'^T)_{ij}(x,t) \partial_{ij}=\mu_i(x,t) \partial_i + \frac{1}{2} (\sigma \sigma^T)_{ij}(x,t) \partial_{ij}=L.\]
Conversely, if $ L=L'$ then $\sigma\sigma^T=\sigma'\sigma'^T$ and by  Proposition \ref{L invariant} there exists $\rho\in O(m)$ such that $\sigma=\sigma' \rho$. Set $ B=\rho^{-1}$ and the proof is complete.
\end{proof}
\begin{osservazione}
    Using \textit{Definition \ref{definizione gauge SDE}}, we can restate \textit{Proposition \ref{Invariance under random rotations of the infinitesimal generator}} saying that $ SDE_{\mu',\sigma'} \in \mathcal{Gauge}(SDE_{\mu,\sigma})$ if and only if $ L'=L.$
\end{osservazione}

\section{Finite stochastic transformations of SDEs}\label{Finite_stochastic_transformations_SDE}
A stochastic transformation \( T \) can be regarded as a function acting both on a given SDE and on its (possibly weak) solution \( (X, W) \). Following the notation introduced in \cite{paper2015}, \cite{paper2020}, and \cite{paper2023}, we denote by \( E_T \) the action of \( T \) on the SDE, and by \( P_T \) its action on the solution process. This is done in such a way that the following structure is preserved: if \( (X, W) \) is a (weak) solution to \( SDE_{\mu, \sigma} \), then the transformed process \( P_T(X, W) := (P_T(X), P_T(W)) \) is still a (weak) solution to the transformed equation \( E_T(SDE_{\mu, \sigma}) := SDE_{E_T(\mu), E_T(\sigma)} \). Analogously to the classical distinction between strong and weak solutions to SDEs, one can define strong and weak stochastic transformations. A strong stochastic transformation leaves the Brownian motion unchanged, that is, \( P_T(W) = W \), since the Brownian motion is assumed to be given \emph{a priori} with the filtered probability space. On the other hand, a weak stochastic transformation also modifies the Brownian component, i.e., \( P_T(W) \neq W \), and in this case it is necessary to verify that \( P_T(W) \) still defines a Brownian motion (possibly with respect to a transformed filtered probability space) in order for the transformed SDE to remain well-posed.\\
To define a class of \emph{admissible} stochastic transformations, as done in \cite{paper2015}, \cite{paper2020}, and \cite{paper2023}, we consider transformations that act on the fundamental degrees of freedom of the stochastic setting: time-dependent spatial diffeomorphisms, time changes (both common in deterministic frameworks), gauge symmetries of the Brownian motion, and changes of the underlying probability measure. All these types of transformations have been extensively studied in \cite{paper2015}. However, in \cite{paper2023}, the main result was proved without including random rotations of the Brownian motion among the admissible transformations.\\
For the reader’s convenience, this section briefly recalls the main definitions and results concerning stochastic transformations from \cite{paper}, though using a slightly different approach (see Section~\ref{Invarian under random rotations}). In the following sections, we will generalize the main result of \cite{paper2023} to include random rotations as well.

\begin{definizione}\label{def trasf}
    Given two open subsets $\tilde{M},\tilde{M}'$ of $\mathbb{R}^n$, we denote by  $M=\tilde{M}\times \mathbb{R}_+$ and $M'=\tilde{M}'\times \mathbb{R}_+$. Let $\tilde{\Phi}: \tilde M\rightarrow \tilde{M}'$,  $B: M \rightarrow SO(m)$, $\eta:\mathbb{R}_+\rightarrow \mathbb{R}_+$, and $ h: M  \rightarrow{R^m}$ be smooth functions. If $ f:\mathbb{R}_+ \rightarrow \mathbb{R}_+$  is the functions $ f(t)=\int_0^t \eta(s)ds$, let $\Phi: M\rightarrow M'$ defined by $ \Phi(x,t)=(\tilde{\Phi}(x),f(t))$ be a diffeomorphism.\\
    
    A (finite) stochastic transformation from $ M$ to $ M'$ is a quadruple $ T=(\Phi, B, \eta, h)$, where the specific action of every single component of $ T$ can be summarized as follows.
    \begin{itemize}
        \item The real valued and smooth function $\eta$ describes a (deterministic) time change transformation $T_1$  given by an absolutely continuous function $\ f:\mathbb{R}_+ \rightarrow\mathbb{R}_+$ 
        \begin{equation}
          t'=f(t)=\int_0^t \eta(s)ds\label{deterministic time change}  
        \end{equation} 
        with $\eta $ strictly positive so that $ f(0)=0$. The absolute continuity assumption ensures the validity of the fundamental theorem of calculus and, consequently, the well-posedness of the representation formula (\ref{deterministic time change}), with $\eta:=f'$. 
        The action $ P_{T_1}$  
        on the solution process is the following \[ P_{T_1}(X)=\mathcal{H}_{\eta}(X), \ \ \ \ \ \  P_{T_1}(W)=\mathcal{H}_{\eta}(W'),\ \ \  with \ \ dW'_t=\sqrt{\eta(t)}dW_t\]
        where $\mathcal{H}_{\eta}$ is the functional such that when applied to a generic process $ Y$ gives $ [\mathcal{H}_{\eta}(Y)]_t=Y_{f^{-1}(t)}$. Moreover, the action $ E_{T_1}$ of $ T_1$ on   $SDE_{\mu, \sigma}$ can be expressed as
        \[ E_{T_1}(\mu)=\frac{\mu}{\eta}, \ \ \ E_{T_1}(\sigma)=\frac{\sigma}{\sqrt{\eta}}.\]
        We note that $ T_1$ changes the original filtered probability space switching from filtration $\mathcal{F}_{t}$ to filtration $\mathcal{F}_{f^{-1}(t)}$. Since  well-known results (see \cite{paper2015}) guarantee that $\mathcal{H}_{\eta}(W')$ is a Brownian motion w.r.t. the latter filtration, then $ T_1$ is a well-defined transformation.

        Since in the following we will deal with non-autonomous transformations involving both $ X_t$ and $ t$, 
        it is convenient to extend the functional $\mathcal{H}_{\eta}$ introduced above to the whole pair $ (X_t,t)$, defining a time transformation  that in the following way: \[\mathcal{H}_{\eta}(X_t,t)=( \mathcal{H}_{\eta}(X_t),f^{-1}(t) )=( X_{f^{-1}(t)}, f^{-1}(t) )\]

        \item  The diffeomorphism  $\Phi: M \rightarrow M'$ describe the time dependent spatial transformation $T_2$. The action $ P_{T_2}$ on the solution process is given by \[ P_{T_2}(X)=\Phi(X),\ \ \ \ P_{T_2}(W)=W  \]
        while the action $ E_{T_2}$ on the equation reads as \[ E_{T_2}(\mu)=L(\Phi)\circ \Phi^{-1}, \ \ \ \ \ E_{T_2}(\sigma)=(D(\Phi) \sigma)\circ \Phi^{-1}\]
        where  $ L$ denotes the infinitesimal generator of the SDE.
        The original filtered probability space remains unchanged under the action of $\ T_2.$
        \item The matrix $B$ is responsible of the random rotation transformation of the Brownian motion, which we denote by $T_3$, extensively studied in \textit{Section \ref{Random rotation of Brownian motion}}. The double action of ${T_3}$ is given by
        \[P_{T_3}(X)=X, \ \ \ \ P_{T_3}(W)=\int_0^t B(X_s,s) dW_s\]
        \[ E_{T_3}(\mu)=\mu, \ \ \ \ E_{T_3}(\sigma)=\sigma B^{-1}.\]
        As we have already seen, $ P_{T_3}(W)$ is still a Brownian motion w.r.t. the same probability space which implies that $\ T_3$ is well-defined.
        \item The function $h$ (satisfying $h(X_t,t) \in M^2[0,T]$) provides the stochastic transformation $T_4$ that changes the original filtered probability space,  switching from the underlying probability measure $\mathbb{P}$ to an equivalent probability measure $\mathbb{Q}$ with Radon-Nikodym derivative between the two measures given by  
        \[ \frac{d \mathbb{Q}}{d \mathbb{P}}|_{\mathcal{F}t} = \exp \Big( \int_0^t \sum_{\alpha=1}^m h_{\alpha}(X_s,s) d W_s^{\alpha} - \frac{1}{2} \int_0^t \sum_{\alpha=1}^{m} (h_{\alpha}(X_s,s))^2 ds\Big). \] 
        Non explosivity hypothesis on $ h$ are required in order to ensure the well-posedness of the measure change of \textit{Girsanov}-type. The non explosivity property, in fact, ensures that the solution exists for all times almost surely. If the transformation $T_4$  is such that the transformed process remains non explosive, then one can correctly apply Girsanov theorem,  see \cite{paper2020}( Definition 3.3 and Lemma 3.4) and also \cite{paper2015},\cite{paper2023}. The action $ T_4$ on the process is given by 
        \[ P_{T_4}(X)=X, \ \ \ \ \ P_{T_4}(W)_t=W_t-\int_0^t h(X_t,t)dt,\] 
        while the action $\ E_{T_4}$ on the equation can be represented as
        \[ E_{T_4}(\mu)=\mu+\sigma h, \ \ \ \ \ \ E_{T_4}(\sigma)=\sigma.\]
        The well-known \textit{Girsanov theorem} (see, e.g. \cite{baldi}) ensures that $ P_T(W)$ is a Brownian motion w.r.t. the new probability measure $ \mathbb{Q}$, so $ T_4$ is well-defined.
    \end{itemize}
        We denote by $\ S_m(M,M')$ the set of all stochastic transformations $\ T=(\Phi,B,\eta,h)$ from $\ M$ to $\ M'$ and by $\ S_m(M)$ when $ M^\prime=M.$   
\end{definizione}

The following theorem allows us to unify the four transformations introduced in the previous definition.
\begin{teorema}[Double action of the transformation]\label{trasformate}
    Let $ T=(\Phi, B, \eta, h)$ be a stochastic transformation. Consider  the solution $ X$ to $ SDE_{\mu,\sigma}$ driven by $ W$, with $ SDE_{\mu,\sigma}$ non-explosive and with respect to which $ h$ and $\eta$ are non-explosive, with $ X$ taking values in $ \tilde{M}$ and $ W$ being a Brownian motion in the probability space $ (\Omega, \mathcal{F}, \mathbb{P})$. Then, the transformed process $ P_T(X,W)=(P_T(X),P_T(W))$ is a solution to the transformed equation $E_T(SDE_{\mu,\sigma})=SDE_{E_T{(\mu)},E_T{(\sigma)}}$, with $ P_{T}(X)$ taking values in $ \tilde{M}'$ and $ P_{T}(W)$ being Brownian motion in the space $ ( \Omega, \mathcal{F'}, \mathbb{Q})$ and with
    \begin{equation*}
        P_{T}(X)= \Phi(\mathcal{H}_{\eta}(X,t)); \ \ \ \ \ \  P_{T}(W)= \mathcal{H}_{\eta}(W') 
    \end{equation*}
    \begin{equation*}
        dW'_t = \sqrt{\eta(t)}B(X_t,t)(dW_t - h(X_t,t) dt)
    \end{equation*}
    \begin{equation*} 
        \frac{d \mathbb{Q}}{d \mathbb{P}}_{| \mathcal{F}_t} = \exp \Big( \int_0^T \sum_{\alpha=1}^m h_\alpha(X_s,s) d W^\alpha_s - \frac{1}{2} \int_0^T \sum_{\alpha=1}^m (h_\alpha(X_s,s))^2 ds \Big)
    \end{equation*}
    \begin{equation*}
        \mathcal{F'}_t=\mathcal{F}_{f^{-1}(t)}.
    \end{equation*}
 \begin{equation*}
        E_{T}(\mu)= \Big( \frac{1}{\eta} [ L(\Phi) + D(\Phi) \sigma  h] \Big) \circ \Phi^{-1}; \ \ \ \ \ \ \
        E_{T}(\sigma)= \Big(\frac{1}{\sqrt{\eta}} D(\Phi)  \sigma B^{-1} \Big) \circ \Phi^{-1}.
    \end{equation*}
    
\end{teorema}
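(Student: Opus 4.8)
The plan is to factor the stochastic transformation $T=(\Phi,B,\eta,h)$ as the ordered composition $T=T_1\circ T_2\circ T_3\circ T_4$ of the four elementary transformations of Definition~\ref{def trasf}, and then to obtain the double action of $T$ by composing, one link at a time, the elementary actions $P_{T_i}$ on the solution pair and $E_{T_i}$ on the coefficients. The order is forced by well-posedness: the measure change $T_4$ is performed first, while the driving noise is still the original $\mathbb{P}$-Brownian motion $W$ so that the classical Girsanov theorem applies verbatim; next the random rotation $T_3$, legitimate by Proposition~\ref{invarianza per rotazioni}; then the spatial diffeomorphism $T_2$, which leaves the noise untouched; and finally the deterministic time change $T_1$, which scales the noise by $\sqrt{\eta}$ and reparametrizes time through the functional $\mathcal{H}_\eta$.

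First I would follow the solution through the chain. Applying $P_{T_4}$ introduces the measure $\mathbb{Q}$ with the stated Radon--Nikodym density and the $\mathbb{Q}$-Brownian motion $\widetilde{W}_t:=W_t-\int_0^t h(X_s,s)\,ds$. Applying $P_{T_3}$ replaces $\widetilde{W}$ by $\int_0^t B(X_s,s)\,d\widetilde{W}_s$, still a $\mathbb{Q}$-Brownian motion by Proposition~\ref{invarianza per rotazioni} (used in the space $(\Omega,\mathcal{F},\mathcal{F}_t,\mathbb{Q})$). The diffeomorphism $P_{T_2}$ does not change the noise, and $P_{T_1}$ first scales it by $\sqrt{\eta}$ and then applies $\mathcal{H}_\eta$; reading off the result yields exactly $dW'_t=\sqrt{\eta(t)}\,B(X_t,t)\big(dW_t-h(X_t,t)\,dt\big)$, $P_T(W)=\mathcal{H}_\eta(W')$, and the reparametrized filtration $\mathcal{F}'_t=\mathcal{F}_{f^{-1}(t)}$. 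On the spatial side, $T_4$ and $T_3$ leave $X$ unchanged, $P_{T_2}$ sends $X$ to $\Phi(X)$, and $P_{T_1}$ prepends $\mathcal{H}_\eta$, so $P_T(X)=\Phi(\mathcal{H}_\eta(X,t))$; since $\Phi$ carries $f$ in its time component, this is the process $\tilde\Phi\big(X_{f^{-1}(t)}\big)$ expressed in the new time variable.

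Then I would compose the coefficient actions in the same order. From $(\mu,\sigma)$ the rule for $E_{T_4}$ gives $(\mu+\sigma h,\sigma)$; the rule for $E_{T_3}$ gives $(\mu+\sigma h,\sigma B^{-1})$, and here the orthogonality $B^{-1}(B^{-1})^{T}=\mathrm{Id}$ is used, so that the generator of this intermediate equation is $\partial_t+(\mu+\sigma h)\partial+\tfrac12\sigma\sigma^{T}\partial^2$, i.e.\ the original $L$ plus the extra drift term that, applied to $\Phi$, produces $D(\Phi)\sigma h$. Applying $E_{T_2}$ and then $E_{T_1}$ — the latter dividing the drift by $\eta$ and the diffusion by $\sqrt{\eta}$, consistently with the action on the time slot of $\Phi$ — gives $E_T(\mu)=\big(\tfrac1\eta[L(\Phi)+D(\Phi)\sigma h]\big)\circ\Phi^{-1}$ and $E_T(\sigma)=\big(\tfrac1{\sqrt{\eta}}D(\Phi)\sigma B^{-1}\big)\circ\Phi^{-1}$. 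That $P_T(X,W)$ solves $SDE_{E_T(\mu),E_T(\sigma)}$ then follows either by transitivity of the elementary double actions along the chain, or directly by applying the It\^o formula to $\Phi(\mathcal{H}_\eta(X,t))$ with respect to the $\mathbb{Q}$-Brownian motion $P_T(W)$.

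The main obstacle is not any single computation but the bookkeeping ensuring that every link of the chain is admissible: one must check that the intermediate equations $SDE_{\mu+\sigma h,\sigma}$, $SDE_{\mu+\sigma h,\sigma B^{-1}}$ and $E_{T_2}(SDE_{\mu,\sigma})$ remain non-explosive on $[0,T]$, so that Girsanov's theorem may be invoked for $T_4$ and the time change $\mathcal{H}_\eta$ is well defined up to the horizon — which is exactly the role of the hypotheses ``$SDE_{\mu,\sigma}$ non-explosive and $h,\eta$ non-explosive with respect to it'' (cf.\ the discussion after Definition~\ref{def trasf} and \cite{paper2020}, Definition~3.3 and Lemma~3.4). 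A secondary technical point is checking that $\mathcal{H}_\eta$ maps $\mathbb{Q}$-Brownian motions to $\mathcal{F}_{f^{-1}(\cdot)}$-Brownian motions and that the time reparametrization commutes appropriately with the previously performed drift shift and rotation; once these are in place, collecting the formulas completes the argument.
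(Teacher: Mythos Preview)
Your proposal is correct and follows essentially the same approach as the paper: the paper's own proof simply states that the theorem ``can be easily proved by iterating the actions of the four specific components summarized in the previous definition'' and refers to \cite{paper2020} for details. Your factorization $T=T_1\circ T_2\circ T_3\circ T_4$ and the step-by-step composition of the elementary actions $P_{T_i}$ and $E_{T_i}$ (measure change, then rotation, then diffeomorphism, then time change) is precisely this iteration carried out explicitly, with the correct bookkeeping on non-explosiveness and on the intermediate Brownian motions.
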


\begin{proof}
    The theorem can be easily proved by iterating the actions of the four specific components summarized in the previous definition. See \cite{paper2020} for all the details.
\end{proof}

\begin{osservazione}
 The usual distinction between weak and strong solutions to a given  SDE suggest the introduction of an analogous distinction between weak and strong stochastic transformations. Indeed, an It\^o diffusion $X$ is a weak solution to  $ SDE_{\mu,\sigma}$ if there exists a filtered probability space $ (\Omega, \mathcal{F},\mathcal{F}_t,\mathbb{P})$ and a Brownian motion $ W$ w.r.t this probability space such that $dX_t=\mu(X_t,t)dt+\sigma(X_t,t)dW_t$. Conversely, an It\^o diffusion $X$ is a strong solution to$\ SDE_{\mu,\sigma}$ if $X$ satisfies the previous equation in a  given a priori filtered probability space $ (\Omega, \mathcal{F},\mathcal{F}_t,\mathbb{P})$ with a fixed  given Brownian motion $W$, w.r.t this probability space. Similarly, a strong stochastic transformation acts only on a fixed probability space and w.r.t. a fixed Brownian motion, and cannot change neither of them: since $ h$ changes the underlying probability, $\eta$ changes the filtration and $B$ changes the Brownian motion, to have a strong stochastic transformation we need $h=0$, $\eta=1$ and $ B=I_m$.   
\end{osservazione}

\subsection{Algebraic and geometric structure of stochastic transformations}

In this section we discuss the algebraic structure of the set of finite stochastic transformations and we exploit the corresponding geometrical setting in order to introduce the notion of infinitesimal transformation.\\
Consider the group given by the direct product $( SO(m)  \times \mathbb{R}_+, \times) $ with the following law \[ (B_1, \eta_1) \times (B_2, \eta_2)=(B_2 \cdot_{SO(m)} B_1, \eta_2 \cdot_{\mathbb{R}_+} \eta_1), \]
where $\cdot_{SO(m)}$ and  $ \cdot_{\mathbb{R}_+}$ are the standard products in the matrix space and in $\mathbb{R}_+$, respectively.  
Consider now the semidirect product $ G:=((SO(m)\times \mathbb{R}_+),\times) \ltimes_{\psi} (\mathbb{R}^m,+) $
with the product law $ *$ induced by the homomorphism 
\[ \psi:  (SO(m)\times\mathbb{R}_+ ) \rightarrow Aut(\mathbb{R}^m)\]
\[ (B,\eta) \mapsto \psi_{(B,\eta)} \]
where 
\[ \psi_{(B,\eta)}: h \mapsto \psi_{(B,\eta)}(h):=\sqrt{\eta}B^{-1}h. \]
The law product $*$ in $ ((SO(m)\times \mathbb{R}_+) \ltimes_{\psi} \mathbb{R}^m , *)$ is induced by the homomorphism $\psi$ in the standard way:
\[ ((B_1,\eta_1),h_1)*((B_2,\eta_2),h_2)=((B_1,\eta_1)\times(B_2,\eta_2),h_1+_{\mathbb{R}^m}\psi_{(B_1,\eta_1)}(h_2))\]
i.e.
\[ (B_1, \eta_1, h_1)*(B_2, \eta_2, h_2)=(B_2\cdot B_1, \eta_2\cdot\eta_1,\sqrt{\eta_1}B_1^{-1}h_2 + h_1)\]

\begin{osservazione} The group $ G$ admits a matrix representation, as every element $ g=(B, \eta, h)$ can be identified with the matrix
\begin{equation*} \begin{pmatrix} \sqrt{\eta} B^{-1} & h \\ 0 & 1 \end{pmatrix} . \end{equation*}

Passing to homogeneous coordinates, with this matrix representation $ G$ can be seen as a subgroup of roto-translations  (matrix rotation $ B^{-1}$ and translating factor $ h$) with a positive scaling factor ($\sqrt{\eta})$ on $\mathbb{R}^m$, with composition law given by $ *$. 

\end{osservazione}
\noindent
Since stochastic transformations also include spatial diffeomorphisms, we can consider the manifold $ M \times G$, where $M:=\tilde{M} \times \mathbb{R}_+$, with the trivial principal bundle structure given by the projection $\pi_M$ onto the first component 
\begin{equation*} \pi_M : M \times G  \longrightarrow M \end{equation*} \begin{equation*} ((x,t),g) \mapsto (x,t) \end{equation*} and a right action of $ G$ on $ M \times G$ that leaves $ M $ invariant:
\begin{equation*} R_{M,g}:  M \times G \longrightarrow M \times G \end{equation*} \begin{equation*} ((x,t),g_1) \mapsto ((x,t),g_1*g) \end{equation*}
where $*$ is the product in $G$ defined above.

\begin{definizione} Given two (trivial) principal bundles $ M \times G$ and $  M' \times  G $, an isomorphism between principal bundles is a diffeomorphism $ F:  M \times G \rightarrow M' \times G$ that preserves the principal bundle structure of $ M \times G$ and $ M' \times G$. In other words, there exists a diffeomorphism $\Phi: M \rightarrow M'$ such that 
\begin{equation}\label{isomorfismo fibrati} 
 \pi_{M'} \circ F = \Phi \circ \pi_{M}, \ \ \ \ \ \  \ \ \ \ \ \  F \circ R_{M,g} = R_{M',g} \circ F \ \ \ \forall g\in G.
\end{equation} 
\end{definizione}
 
\begin{osservazione}\label{identificazione} The isomorphism $ F$ is completely determined by the value of $ ((x,t),e)$, where $ e=(I_m,1,\underline{0})$ is the neutral element of the group $ G$. Indeed, $\forall ((x,t),g_1) \in M \times G$, we have that
\begin{equation*} ((x,t),g_1) = ((x,t), e \cdot g_1) = R_{M,g_1}((x,t),e)
\end{equation*} 
thus
\begin{equation*} 
F((x,t),g_1)=F \circ R_{M,g_1}((x,t),e) \underbrace{=}_{(\ref{isomorfismo fibrati})} R_{M',g_1}F((x,t),e). \end{equation*} 
Hence, given the image of $ ((x,t),e)$, to reconstruct the image of a generic element $ ((x,t),g_1)$ it is sufficient to multiply the second component of $ F((x,t),e)$ by $ g_1$. Since by definition there exists $\Phi$ such that $F\circ\pi_{M'} = \pi_{M}\circ\Phi$, the first component of every couple $ ((x,t),g_1)$ is mapped to $\Phi(x,t)$. So,
assuming that $ F((x,t),e)= (\Phi(x,t),g)$, we have that 
\begin{equation*} F((x,t),g_1) = R_{M',g_1}F((x,t),e)= R_{M',g_1}( \Phi(x,t),g) = (\Phi(x,t),g*g_1). 
\end{equation*} 
\end{osservazione}

\noindent
From Remark \ref{identificazione}, it follows that there is a one-to-one correspondence between $ F$ and the pair $ (\Phi(x,t),g)= F( (x,t),e)$, with $ g=(B,\eta,h)$, i.e., there is a natural identification between a stochastic transformation $ T=(\Phi, B, \eta, h)$ and the isomorphism $ F_T: M \times G\rightarrow M' \times G$ defined as 
\begin{equation*} F_T((x,t),g) =\Big(\Phi(x,t), \big(B(x,t),\eta(t), h(x,t)\big)\cdot g  \Big). \end{equation*}

 Thus, $S_m(M,M')$ inherits the properties of the set $Iso(G \times M, G \times M')$: in particular, the natural composition between an element of $ Iso(G \times M, G \times M')$ and an element of $ Iso(G \times M', G \times M'')$ to obtain an element of $ Iso(G \times M, G \times M'')$ guarantees the existence of a natural composition law between elements of $ S_m(M,M') $ and $ S_m(M',M'')$ to obtain an element of $ S_m(M,M'')$. Indeed, we can exploit the identification of stochastic transformations with the isomorphisms of suitable trivial principal bundles in order to define the following algebraic structure on the group of stochastic transformations:
 
 \begin{definizione}[Composition law] \label{legge di composizione} Let us consider two stochastic transformations $T_1=(\Phi_1, B_1, \eta_1, h_1) \in S_m(M,M')$ and $ T_2=(\Phi_2, B_2, \eta_2, h_2)\in S_m(M',M'')$. We define $ T_2\circ T_1$ as the unique transformation corresponding to the composition $F_{T_2}\circ F_{T_1}$ of the associated isomorphisms, i.e. \begin{equation*}
 T_2 \circ T_1 := \big( \Phi_2 \circ \Phi_1, \ (B_2 \circ \Phi_1)  B_1, \ (\eta_2 \circ f_1)  \eta_1,\ \sqrt{\eta_1}B_1^{-1}( h_2 \circ \Phi_1) + h_1 \big) \in S_m(M,M''),
 \end{equation*}
 where $\Phi_2 \circ \Phi_1=(\tilde{\Phi}_2\circ \tilde{\Phi}_1,f_2\circ f_1).$
 In the same way, we define $\ T_1^{-1}$ as the unique transformation corresponding to the inverse isomorphism $\ F_{T_1}^{-1}$, i.e.
 \begin{equation*}
        T_1^{-1}:=( \Phi_1^{-1}, \ (B_1 \circ \Phi_1^{-1})^{-1}, \ (\eta_1 \circ f_1^{-1})^{-1}, - \frac{1}{\sqrt{\eta_1}} B_1  h_1 \circ \Phi_1^{-1}) \in S_m(M',M).
    \end{equation*}
 \end{definizione}

The geometric description of stochastic transformations has a deep probabilistic foundation, which ensures a correspondence between the composition of transformations just described and the composition in terms of transformed processes and the transformation of the coefficients of an SDE. In particular we have the following result.

\begin{teorema}\label{composizione di processi}
    Let $ T\in S_m(M,M')$ and $ T'\in S_m(M',M'')$ be two stochastic transformations and $ SDE_{\mu,\sigma}$ be a non-explosive stochastic differential equation such that $ E_{T}(\mu,\sigma)$ and $ E_{T'}(E_{T}(\mu,\sigma))$ are non-explosive.  Given  a weak solution $(X,W)$  to $ SDE_{\mu,\sigma}$ in the probability space $ (\Omega, \mathcal{F}, \mathbb{P})$, then in the probability space $\ (\Omega, \mathcal{F}, \mathbb{Q})$
    it holds that 
    \begin{equation*} P_{T'}\circ P_{T}(X,W) = P_{T' \circ T}(X,W); \ \ \ \ \ \  E_{T'} \circ E_T (\mu,\sigma)= E_{T' \circ T}(\mu,\sigma) \end{equation*}
\end{teorema}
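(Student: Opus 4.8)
The plan is to reduce the composition identity for the pair $(X,W)$ to the composition identity for the four elementary building blocks $T_1,\dots,T_4$ from Definition \ref{def trasf}, using the fact (recorded in Remark \ref{identificazione} and Definition \ref{legge di composizione}) that $T\mapsto F_T$ is an isomorphism of the group of stochastic transformations onto a subgroup of principal-bundle isomorphisms. First I would invoke Theorem \ref{trasformate}: each of $E_T$, $P_T$ has been expressed there as the result of performing, in a fixed order, a time change, a spatial diffeomorphism, a Brownian rotation, and a Girsanov measure change. Thus it suffices to check that $P_{T'}\circ P_T=P_{T'\circ T}$ (and the analogous statement for $E$) holds when both $T$ and $T'$ are each one of the four elementary types, and then that the ordering conventions are compatible. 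Since $E_T$ acts on the deterministic data $(\mu,\sigma)$ exactly as prescribed by $F_T$ on the $G$-fibre (with the spatial part conjugated by $\Phi$), the identity $E_{T'}\circ E_T=E_{T'\circ T}$ is, modulo the bookkeeping of the $\Phi$-pullbacks, precisely the statement that $F_{T'}\circ F_T=F_{T'\circ T}$, which is true by construction of $\circ$ in Definition \ref{legge di composizione}; I would write this out by substituting the formulas for $E_{T_1}(\mu),\dots,E_{T_4}(\sigma)$ and checking they match the coefficient formulas one gets from $T'\circ T$.

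The process-level identity $P_{T'}\circ P_T(X,W)=P_{T'\circ T}(X,W)$ requires a little more care because $P_T(W)$ is defined by the stochastic integral
\begin{equation*}
dW'_t=\sqrt{\eta(t)}\,B(X_t,t)\bigl(dW_t-h(X_t,t)\,dt\bigr),
\end{equation*}
and one must verify that iterating this with a second transformation $T'=(\Phi',B',\eta',h')$ produces the same integrand as the single transformation $T'\circ T$. Here the key algebraic point is the semidirect-product law $*$: the composite rotation is $(B'\circ\Phi_1)B_1$, the composite scaling is $(\eta'\circ f_1)\eta_1$, and the composite drift shift is $\sqrt{\eta_1}B_1^{-1}(h'\circ\Phi_1)+h_1$, exactly the coordinates appearing in Definition \ref{legge di composizione}. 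The one subtlety is that the second transformation sees the already-transformed process $P_T(X)=\Phi(\mathcal H_\eta(X,t))$, so in $P_{T'}$ the functions $B',\eta',h'$ are evaluated along $P_T(X)$; undoing this evaluation via $\Phi^{-1}$ and $f^{-1}$ is precisely what turns $B'$ into $B'\circ\Phi_1$, $\eta'$ into $\eta'\circ f_1$, etc. I would make this explicit by writing $P_{T'}(P_T(X),P_T(W))$ using Theorem \ref{trasformate} applied to the SDE $E_T(SDE_{\mu,\sigma})$, then performing the change of variables in the stochastic and Lebesgue integrals.

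The main obstacle I anticipate is handling the measure change correctly: after applying $T$ we are working under $\mathbb Q$, not $\mathbb P$, so the Girsanov density contributed by $T'$ is written in terms of the $\mathbb Q$-Brownian motion $P_T(W)$, and one must show that the product of the two Radon–Nikodym densities equals the single density $d(\mathbb Q')/d\mathbb P$ associated with $T'\circ T$; this uses the multiplicativity of exponential martingales together with the substitution $dP_T(W)=\sqrt{\eta}B(dW-h\,dt)$ to re-express everything as stochastic integrals against the original $W$, and it is here that the drift-composition formula $\sqrt{\eta_1}B_1^{-1}(h'\circ\Phi_1)+h_1$ gets genuinely used. Once the densities match and the time change $\mathcal F'_t=\mathcal F_{f^{-1}(t)}$ is composed (which is immediate since $f$ is deterministic and $(\eta'\circ f_1)\eta_1$ integrates to $f'\circ f$), the remaining verifications are the routine substitutions indicated above, and I would simply remark that the full computation is carried out in \cite{paper2020}.
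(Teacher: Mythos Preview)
Your proposal is correct and in fact considerably more detailed than what the paper itself provides: the paper's proof consists solely of the sentence ``See \cite{paper2020} for the proof,'' with no argument given. Your sketch---reducing to the four elementary transformations of Definition~\ref{def trasf}, verifying that the coefficient-level identity $E_{T'}\circ E_T=E_{T'\circ T}$ is the algebraic content of $F_{T'}\circ F_T=F_{T'\circ T}$, and checking the process-level identity by iterating the stochastic integral for $P_T(W)$ and matching the Girsanov densities via the semidirect-product law---is exactly the strategy one expects and is consistent with the cited reference, which you also invoke at the end.
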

\begin{proof}
    See \cite{paper2020} for the proof.
\end{proof}

It is easy to verify that the set of stochastic transformations on the same manifold $ S_m(M)$ forms a group with respect to the composition '$\circ$'. Moreover, the identification between the group of stochastic transformations and the group of isomorphisms between bundles $Iso(G\times M, G \times M)$, which is a closed subgroup of the group of diffeomorphisms of $G \times M$, suggests considering the corresponding Lie algebra $ V_m(M)$. Indeed, with the identifications just made for the composition law and for the inverse, it is  possible to introduce the one-parameter group of transformations $ (F_T)_{T \in S_m(M)}$:  \begin{equation*}
F_{T_1}\circ F_{T_2} = F_{T_1 \circ T_2}; \ \ \ \ F_{T}^{-1} = F_{T^{-1}}; \ \ \ \ F_{e}(x)=x, 
\end{equation*} 
where $e$ denotes the identity transformation.   
Moreover, it is possible to introduce the one-parameter group of stochastic transformations $ T_{\lambda}=(\Phi_{\lambda}, B_{\lambda}, \eta_{\lambda}, h_{\lambda})$ indexed in $ (\mathbb{R}, +)$, where $\Phi_{\lambda}=(\tilde{\Phi}_{\lambda}, f_{\lambda})$, with the following notations derived from those of the isomorphisms of bundles:
\begin{equation*} 
\begin{matrix} T_{\lambda_1 + \lambda_2}= T_{\lambda_1}\circ T_{\lambda_2}; & T_{-\lambda}=T_{\lambda}^{-1;} & T_0=e. \end{matrix} 
\end{equation*} 
Thus, $(T_{\lambda})_{\lambda \in \mathbb{R}}$ satisfies all the conditions to be a one-parameter group of transformations  and we can consider the corresponding Lie algebra $ V_m(M)$. We recall that, given a Lie group $G$, its Lie algebra is isomorphic to the tangent space at the identity of $ G$: $\mathcal{g} \cong TG_{|_e}$. Therefore, the elements of the Lie algebra can be identified with  the tangent vectors at the identity ($\lambda=0$) to the maximal integral curves that constitute the Lie group, that is,  the derivatives with respect to the parameter of the one-parameter group elements evaluated at $ 0$. 
Thus,  $V_m(M)$ is constituted by the elements $ V= (Y, C, \tau, H) $, with $Y=(\tilde{Y},m)$ a vector field on $M$, $C: M \rightarrow \mathcal{so}(m) $, $\tau: \mathbb{R}_+ \rightarrow \mathbb{R}$, $ H: M \rightarrow \mathbb{R}^m$ smooth functions obtained in the usual way

\begin{equation}\label{algebra di lie} \begin{matrix} 
Y(x,t)= \partial_{\lambda}(\Phi_{\lambda}(x,t))|_{\lambda=0} & m(t)= \partial_{\lambda}(f_{\lambda}(t))|_{\lambda=0}\\
C(x,t)= \partial_{\lambda}(B_{\lambda}(x,t))|_{\lambda=0}&
\tau(t)=\partial_{\lambda}(\eta_{\lambda}(t))|_{\lambda=0}&  
H(x,t)=\partial_{\lambda}(h_{\lambda}(x,t))|_{\lambda=0} \\  
\end{matrix} \end{equation}

\begin{osservazione}
    Using (\ref{algebra di lie}) and the smoothness of  $\eta$ \footnote{More precisely, the interchange of the derivative and the expectation is formally  justified by Kunita lemma, stated in (\textit{Lemma \ref{Kunita}}).}, we have that
    \begin{equation*}
       m(t)= [\partial_{\lambda} f_{\lambda} (t)]|_{\lambda=0}= \partial_{\lambda}\int_0^t\eta_{\lambda}(s)ds|_{\lambda=0}=   \int_0^t \partial_{\lambda}  \eta_{\lambda}(s)|_{\lambda=0} ds= \int_0^t \tau(s) ds.
  \end{equation*}
\end{osservazione}
\noindent Therefore we can give the following 
\begin{definizione}
An infinitesimal  stochastic transformation is an element $V=(Y, C, \tau,H) $ of the Lie algebra $ V_m(M)$, obtained as in  (\ref{algebra di lie}), with $Y=(\tilde{Y},m)$, where $ \tilde{Y}$ is a vector field on $ \tilde{M}$, $ m: \mathbb{R}_+\rightarrow \mathbb{R}$, $ C: M \rightarrow \mathcal{so}(m)$, $\tau:\mathbb{R}_+\rightarrow \mathbb{R}$, $\ H: M\rightarrow \mathbb{R}^m$. If $ V$ is of the form $ V=((\tilde{Y},0),0,0, 0)$, then it is called a strong infinitesimal stochastic transformation. \end{definizione} 

\noindent
We have seen so far how to compute the Lie algebra $ V_m(M)$ of the given one-parameter group $ T_{\lambda}.$ On the other hand,  given an element of the Lie algebra, one can trace back to the one-parameter group that generated it.

\begin{teorema}[Reconstruction of the flow] \label{teo ricostruzione del flusso} Let $ V=(Y, C, \tau, H)$ be an infinitesimal stochastic transformation (as before, $Y=(\tilde{Y},m)$). Then, it is possible to reconstruct the corresponding one-parameter group $ T_{\lambda}$ by choosing $\Phi_{\lambda}, f_{\lambda}, B_{\lambda},\eta_{\lambda},h_{\lambda}$ as solutions to the following system
 \begin{equation}\label{ricostruzione del flusso} \begin{matrix} \partial_{\lambda}(\Phi_{\lambda}(x,t))=Y(\Phi_{\lambda}(x,t)) & \partial_{\lambda} f_{\lambda}(t)=m(f_{\lambda}(t))\\
 \partial_{\lambda}(B_{\lambda}(x,t))=C(\Phi_{\lambda}(x,t)) B_{\lambda}(x,t) & \partial_{\lambda}(\eta_{\lambda}(t))=\tau(f_{\lambda}(t))\eta_{\lambda}(t)\\ \partial_{\lambda}(h_{\lambda}(x,t))=\sqrt{\eta_{\lambda}}B_{\lambda}^{-1}(x,t) H(\Phi_{\lambda}(x,t)) & \\\end{matrix} \end{equation} with initial conditions \begin{equation*} \begin{matrix} \Phi_0=Id_{\mathbb{R}^n};& f_0=id_{\mathbb{R}}; & B_0=I_m; & \eta_0=1; & h_0=0. \end{matrix} \end{equation*} \end{teorema}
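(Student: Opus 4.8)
The plan is to verify that the one-parameter family $T_\lambda = (\Phi_\lambda, B_\lambda, \eta_\lambda, h_\lambda)$ defined by the ODE system \eqref{ricostruzione del flusso} with the prescribed initial conditions is precisely the one-parameter group whose infinitesimal generator is the given $V = (Y, C, \tau, H)$. Since we have already established (Remark \ref{identificazione} and the discussion following it) the identification between stochastic transformations $T$ and principal bundle isomorphisms $F_T \in \mathrm{Iso}(G\times M, G\times M)$, and since $\mathrm{Iso}(G\times M)$ is a closed subgroup of the diffeomorphism group of $G\times M$ with well-defined Lie algebra $V_m(M)$, the statement reduces to a standard fact: the flow of a left-invariant vector field on a Lie group (here, the infinite-dimensional group of bundle automorphisms) is recovered by solving the corresponding ODE. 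So the first step is to translate \eqref{ricostruzione del flusso} into the single equation $\partial_\lambda F_{T_\lambda} = \mathcal{V} \circ F_{T_\lambda}$ (or the appropriate right-invariant version, matching the convention $T_{\lambda_1+\lambda_2} = T_{\lambda_1}\circ T_{\lambda_2}$ from Definition \ref{legge di composizione}), where $\mathcal{V}$ is the vector field on $G\times M$ corresponding to $V$.

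The core computation is then component-by-component. First, $\Phi_\lambda$: the equation $\partial_\lambda \Phi_\lambda = Y \circ \Phi_\lambda$ with $\Phi_0 = \mathrm{Id}$ is, by definition, the flow of the vector field $Y$ on $M$, so its generator at $\lambda = 0$ is exactly $Y$, as required by \eqref{algebra di lie}; and the flow property $\Phi_{\lambda_1+\lambda_2} = \Phi_{\lambda_1}\circ\Phi_{\lambda_2}$ follows from uniqueness of solutions to autonomous ODEs. The same argument handles $f_\lambda$ via $\partial_\lambda f_\lambda = m\circ f_\lambda$, and consistency with $m(t) = \int_0^t \tau(s)\,ds$ is checked by differentiating. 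For $B_\lambda$, the equation $\partial_\lambda B_\lambda = (C\circ\Phi_\lambda)B_\lambda$ is a linear (matrix) ODE along the flow $\Phi_\lambda$; one must verify that the solution stays in $SO(m)$ — this follows because $C$ takes values in $\mathfrak{so}(m)$, so $\frac{d}{d\lambda}(B_\lambda B_\lambda^T) = (C\circ\Phi_\lambda) B_\lambda B_\lambda^T + B_\lambda B_\lambda^T (C\circ\Phi_\lambda)^T = 0$ using antisymmetry of $C$, hence $B_\lambda B_\lambda^T \equiv I_m$ — and that $\partial_\lambda B_\lambda|_{\lambda=0} = C$, matching \eqref{algebra di lie}. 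Similarly $\eta_\lambda > 0$ is preserved by the linear equation $\partial_\lambda \eta_\lambda = (\tau\circ f_\lambda)\eta_\lambda$ with $\eta_0 = 1$, and its generator is $\tau$. Finally $h_\lambda$ solves $\partial_\lambda h_\lambda = \sqrt{\eta_\lambda}\,B_\lambda^{-1}(H\circ\Phi_\lambda)$ with $h_0 = 0$, giving $\partial_\lambda h_\lambda|_{\lambda=0} = H$.

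The step requiring the most care is showing that these componentwise flows assemble into a genuine one-parameter \emph{group} under the composition law $\circ$ of Definition \ref{legge di composizione}, i.e. that $T_{\lambda_1}\circ T_{\lambda_2} = T_{\lambda_1+\lambda_2}$ for the specific (twisted) composition formula
\[
T_2 \circ T_1 = \big(\Phi_2\circ\Phi_1,\ (B_2\circ\Phi_1)B_1,\ (\eta_2\circ f_1)\eta_1,\ \sqrt{\eta_1}B_1^{-1}(h_2\circ\Phi_1) + h_1\big).
\]
The cleanest route is \emph{not} to check this directly on the components but to invoke the bundle-isomorphism identification: $F_{T_\lambda}$ satisfies a first-order autonomous ODE on $\mathrm{Iso}(G\times M)$ with $F_{T_0} = \mathrm{id}$, so by uniqueness it is a one-parameter group, $F_{T_{\lambda_1}}\circ F_{T_{\lambda_2}} = F_{T_{\lambda_1+\lambda_2}}$, and transporting this back through the identification gives the group law in $S_m(M)$ automatically. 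One should, however, double-check that the twisted composition in the $B$ and $h$ slots is exactly what the ODE system \eqref{ricostruzione del flusso} produces — in particular that the factor $C(\Phi_\lambda(x,t))B_\lambda(x,t)$ (rather than $B_\lambda C$) on the right-hand side of the $B$-equation is the correct one to make $\partial_\lambda F_{T_\lambda} = \mathcal{V}\circ F_{T_\lambda}$ hold with the chosen orientation of the group action $R_{M,g}$. Once the orientation conventions are pinned down, the remaining verifications are routine applications of existence–uniqueness for ODEs, and we refer to \cite{paper2015, paper2020} for the fully detailed computations.
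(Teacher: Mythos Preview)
Your proposal is correct and in fact considerably more detailed than the paper's own treatment, which does not give a proof at all but simply refers the reader to \cite{paper2020}. The approach you outline --- translating the problem to the flow of a vector field on the bundle-automorphism group via the identification of Remark~\ref{identificazione}, checking each component ODE against \eqref{algebra di lie}, verifying that $B_\lambda$ stays in $SO(m)$ and $\eta_\lambda>0$, and deducing the group law from uniqueness for the autonomous ODE on $\mathrm{Iso}(G\times M)$ --- is exactly the standard argument and matches the structure the paper has set up in Section~\ref{Finite_stochastic_transformations_SDE}.
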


 \begin{proof}
 The proof can be found in  \cite{paper2020}.    
 \end{proof}

\section{ Invariance properties of an SDE}\label{InvarianceProperties}
The concept of transformation is closely linked to the concept of symmetry, where a symmetry is understood as a transformation under which the transformed object remains invariant. 
So far, in \cite{paper}, \cite{paper2020}, and \cite{paper2023}, the following notion of invariance for an SDE has been assumed: an SDE is said to be (strongly/weakly) invariant under the action of a transformation $ T$ if $ T$ preserves the set of its (strong/weak) solutions, that is, if the transformed solution process remains a solution to the original SDE. As we will see later, if we allow gauge transformations such as random rotations of the driving Brownian motion, a novel notion of invariance arises.

\begin{definizione}\label{PT}
    A stochastic transformation $ T \in S_m(M)$ is a weak (finite) symmetry of  $SDE_{\mu,\sigma}$ if, for every solution process $ (X, W)$, the transformed process $$ P_T(X,W)=(P_T(X),P_T(W))$$ 
    is still a solution to $SDE_{\mu,\sigma}$. \\
    A strong stochastic transformation $ T \in S_m(M)$ is a strong (finite) symmetry of $ SDE_{\mu,\sigma}$ if for every solution $ (X,W)$ to $ SDE_{\mu,\sigma}$ the transformed process $ P_T(X,W)=(P_T(X),W)$  is still a solution to $ SDE_{\mu,\sigma}$.
\end{definizione}

\noindent The following theorem, proved in \cite{paper2020}, provides a characterization of the symmetry property in terms of the coefficients of an SDE:
\begin{teorema}\label{equazioni determinanti finite}
    A stochastic transformation $\ T=(\Phi, B, \eta,h) \in S_m(M)$ is a (weak) symmetry of  $ SDE_{\mu,\sigma}$ if and only if
   
\begin{equation}\label{simmetria estesa 1}
    \mu=\Big(\frac{1}{\eta} [ L(\Phi) + D(\Phi)  \sigma h]\Big)\circ\Phi^{-1}, \ \ \ \ \ \  \sigma=\Big(\frac{1}{\sqrt{\eta}} D(\Phi)  \sigma B^{-1} \Big)\circ\Phi^{-1}.
\end{equation}

\end{teorema}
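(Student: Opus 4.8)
The plan is to obtain Theorem \ref{equazioni determinanti finite} as a direct consequence of the double-action Theorem \ref{trasformate} together with the definition of weak symmetry (Definition \ref{PT}). By Theorem \ref{trasformate}, if $(X,W)$ is a weak solution to $SDE_{\mu,\sigma}$, then $P_T(X,W)=(P_T(X),P_T(W))$ is a weak solution, driven by the Brownian motion $P_T(W)$, to $E_T(SDE_{\mu,\sigma})=SDE_{E_T(\mu),E_T(\sigma)}$, where
\[
E_T(\mu)=\Big(\frac{1}{\eta}[L(\Phi)+D(\Phi)\sigma h]\Big)\circ\Phi^{-1},\qquad
E_T(\sigma)=\Big(\frac{1}{\sqrt{\eta}}D(\Phi)\sigma B^{-1}\Big)\circ\Phi^{-1}.
\]
Thus the statement reduces to the equivalence: $T$ is a weak symmetry of $SDE_{\mu,\sigma}$ if and only if $(E_T(\mu),E_T(\sigma))=(\mu,\sigma)$, which is exactly condition \eqref{simmetria estesa 1}.

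The implication \eqref{simmetria estesa 1} $\Rightarrow$ symmetry is immediate: if $E_T(\mu)=\mu$ and $E_T(\sigma)=\sigma$, then $E_T(SDE_{\mu,\sigma})=SDE_{\mu,\sigma}$, so by Theorem \ref{trasformate} the transformed pair $P_T(X,W)$ is again a weak solution to $SDE_{\mu,\sigma}$; by Definition \ref{PT}, $T$ is a weak symmetry.

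For the converse, assume $T$ is a weak symmetry and set $(\hat X,\hat W):=P_T(X,W)$. Then $\hat X$ is simultaneously a weak solution, driven by the same Brownian motion $\hat W$, to $SDE_{\mu,\sigma}$ (because $T$ is a symmetry) and to $SDE_{E_T(\mu),E_T(\sigma)}$ (by Theorem \ref{trasformate}), possibly on a transformed filtered probability space. Subtracting the two integral identities of the form \eqref{sol} gives, for every $t$,
\[
\int_0^t \big(\mu-E_T(\mu)\big)(\hat X_s,s)\,ds+\int_0^t\big(\sigma-E_T(\sigma)\big)(\hat X_s,s)\,d\hat W_s=0\quad\text{a.s.}
\]
By uniqueness of the semimartingale decomposition, the local-martingale part vanishes — and, having zero quadratic variation, so does its integrand $\sigma-E_T(\sigma)$ evaluated along $\hat X$ — and then the finite-variation part forces $\mu-E_T(\mu)$ to vanish along $\hat X$ as well, for a.e.\ $s$, $\mathbb P$-a.s. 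Note that this recovers $\sigma$ itself, and not merely $\sigma\sigma^T$, precisely because both representations of $d\hat X$ are driven by the same $\hat W$ — in contrast with the gauge ambiguity discussed in Section \ref{Invarian under random rotations}.

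It remains to upgrade these almost-sure pathwise identities to the pointwise equalities of functions asserted in \eqref{simmetria estesa 1}. Here one invokes the canonical realization recalled above: the equation may be solved from an arbitrary initial point, giving a family of solutions $(\hat X^x)_{x\in\tilde M}$ with $\hat X^x_0=x$; since $\mu,\sigma,E_T(\mu),E_T(\sigma)$ are continuous, letting $s\downarrow 0$ yields $(\mu-E_T(\mu))(x,0)=0$ and $(\sigma-E_T(\sigma))(x,0)=0$ for every $x\in\tilde M$, and repeating the argument for solutions started at an arbitrary time $t_0$ (equivalently, passing to the autonomous picture via Remark \ref{autonome non autonome}) extends this to every $(x,t)$. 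Hence $(E_T(\mu),E_T(\sigma))=(\mu,\sigma)$, i.e.\ \eqref{simmetria estesa 1} holds, completing the proof. I expect the main obstacle to be exactly this last step: making rigorous the passage from an a.s.\ identity along one trajectory to an identity of coefficient functions requires that the solutions explore the whole domain — which is what the canonical family $(P^x)_x$ and the regularity hypotheses on $\mu,\sigma$ provide — together with a time-translation argument to cover the explicit $t$-dependence; the algebraic handling of the integral equations is otherwise routine.
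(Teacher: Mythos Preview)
The paper does not actually prove this theorem: it is stated with the remark ``The following theorem, proved in \cite{paper2020}, provides a characterization\ldots'' and no argument is supplied. So there is no in-paper proof to compare against.

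Your proposal is correct and is the natural argument. The forward implication is immediate from Theorem~\ref{trasformate}. For the converse, your use of the uniqueness of the semimartingale decomposition to separate drift and martingale parts, followed by the quadratic-variation argument to conclude $(\sigma-E_T(\sigma))(\hat X_s,s)=0$ a.e., is exactly right, and your emphasis that both representations are driven by the \emph{same} Brownian motion $\hat W$ is the key point distinguishing this from the gauge situation in Section~\ref{Invarian under random rotations}. The final upgrade from ``along trajectories'' to ``pointwise on $M$'' is indeed the only delicate step; your outline --- vary the initial condition over $\tilde M$ (legitimate since Definition~\ref{PT} quantifies over \emph{all} solutions, and $\tilde\Phi$ is a diffeomorphism so $\hat X_0$ sweeps $\tilde M$), use continuity of the coefficients at $s\downarrow 0$, then shift the initial time --- is adequate and honestly flagged as the place where care is needed.
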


For later use, we recall some results about infinitesimal symmetry.

\begin{definizione}
    A stochastic infinitesimal transformation $ V$ that generates a one-parameter group $ T_{\lambda}$ is called an infinitesimal symmetry - respectively, strong or weak - for the $SDE_{\mu,\sigma}$ if $T_{\lambda}$ is a finite symmetry - respectively, strong or weak - of the $ SDE_{\mu,\sigma}$.
\end{definizione}

The following theorem provides a useful characterization for the infinitesimal symmetries of an SDE, presenting \textit{determining equations}. These equations are the infinitesimal counterpart of the equations \eqref{simmetria estesa 1}, but they provide a powerful tool for the explicit calculation of the symmetries of an SDE, being much simpler to solve than \eqref{simmetria estesa 1}.

\begin{teorema}\label{equazioni determinanti}
Let $ V=(Y,C,\tau,H)$ be a stochastic infinitesimal transformation. Then $ V$ is an infinitesimal symmetry of $ SDE_{\mu,\sigma}$ if and only if $ V$ generates a one-parameter group on $ M$ and the following conditions hold:
\begin{equation}\label{equazione determinante 1.1}
    Y(\mu) - L(Y)  - \sigma  H + \tau \mu=0, \ \ \ \ \  [Y, \sigma] + \frac{1}{2} \tau \sigma + \sigma C=0.
\end{equation}
where $\ [Y, \sigma]^i_{\alpha}=Y(\sigma^i_{\alpha}) - \partial_k(Y^i)\sigma^k_{\alpha}= Y^k \partial_k (\sigma^i_{\alpha}) - \partial_k(Y^i)\sigma^k_{\alpha}. $

\end{teorema}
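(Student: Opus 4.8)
The plan is to derive the infinitesimal determining equations \eqref{equazione determinante 1.1} by differentiating the finite symmetry conditions \eqref{simmetria estesa 1} of Theorem \ref{equazioni determinanti finite} with respect to the group parameter $\lambda$ and evaluating at $\lambda=0$, using the reconstruction formulas \eqref{ricostruzione del flusso} of Theorem \ref{teo ricostruzione del flusso} to express the $\lambda$-derivatives of $\Phi_\lambda, B_\lambda, \eta_\lambda, h_\lambda$ in terms of the infinitesimal data $(Y,C,\tau,H)$. Concretely, I would apply $T_\lambda$ to $SDE_{\mu,\sigma}$, so that the pair of equations reads $\mu=\big(\tfrac{1}{\eta_\lambda}[L(\Phi_\lambda)+D(\Phi_\lambda)\sigma h_\lambda]\big)\circ\Phi_\lambda^{-1}$ and $\sigma=\big(\tfrac{1}{\sqrt{\eta_\lambda}}D(\Phi_\lambda)\sigma B_\lambda^{-1}\big)\circ\Phi_\lambda^{-1}$, valid identically in $\lambda$ precisely when each $T_\lambda$ is a symmetry, and then differentiate both sides at $\lambda=0$. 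Since the left-hand sides are $\lambda$-independent, the $\lambda$-derivative of each right-hand side must vanish.

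The key steps, in order: (i) At $\lambda=0$ one has $\Phi_0=\mathrm{Id}$, $f_0=\mathrm{id}$, $B_0=I_m$, $\eta_0=1$, $h_0=0$, and by \eqref{ricostruzione del flusso} and \eqref{algebra di lie}, $\partial_\lambda\Phi_\lambda|_0=Y$, $\partial_\lambda\eta_\lambda|_0=\tau$, $\partial_\lambda B_\lambda|_0=C$, $\partial_\lambda h_\lambda|_0=H$ (using $\eta_0=1$, $B_0=I_m$ in the fifth equation of \eqref{ricostruzione del flusso}). (ii) Differentiate the drift equation: the term $\partial_\lambda(\Phi_\lambda^{-1})|_0=-Y$ contributes $-Y(\mu)$ through the chain rule on the outer composition; $\partial_\lambda(1/\eta_\lambda)|_0=-\tau$ contributes $\tau\mu$; $\partial_\lambda L(\Phi_\lambda)|_0=L(Y)$; $\partial_\lambda(D(\Phi_\lambda)\sigma h_\lambda)|_0=\sigma H$ since $h_0=0$ kills the $D(\Phi_\lambda)$-derivative term and $D(\Phi_0)=I$. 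Collecting signs gives $-Y(\mu)+L(Y)+\sigma H-\tau\mu=0$, i.e. the first equation. (iii) Differentiate the diffusion equation: $\partial_\lambda(\Phi_\lambda^{-1})|_0=-Y$ gives $-Y(\sigma)$ on the outer composition, which combined with $\partial_\lambda D(\Phi_\lambda)|_0=D(Y)$ produces $-[Y,\sigma]$ with the stated bracket convention $[Y,\sigma]^i_\alpha=Y^k\partial_k\sigma^i_\alpha-\partial_k(Y^i)\sigma^k_\alpha$; $\partial_\lambda(1/\sqrt{\eta_\lambda})|_0=-\tfrac12\tau$ gives $-\tfrac12\tau\sigma$; $\partial_\lambda(B_\lambda^{-1})|_0=-C$ gives $-\sigma C$. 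This yields $-[Y,\sigma]-\tfrac12\tau\sigma-\sigma C=0$, equivalently the second equation. (iv) Conversely, if $(Y,C,\tau,H)$ generates a one-parameter group on $M$ and satisfies \eqref{equazione determinante 1.1}, one integrates: the same computation shows $\partial_\lambda$ of the right-hand sides of \eqref{simmetria estesa 1} vanishes for all $\lambda$ (the derivative at a general $\lambda$ reduces to the $\lambda=0$ identity composed with $\Phi_\lambda$ after a change of base point), so the right-hand sides are constant in $\lambda$, equal to their $\lambda=0$ value $(\mu,\sigma)$; hence $T_\lambda$ satisfies \eqref{simmetria estesa 1} and is a finite symmetry by Theorem \ref{equazioni determinanti finite}, so $V$ is an infinitesimal symmetry.

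The main obstacle is step (iv), the converse direction: passing from "the $\lambda$-derivative vanishes at $\lambda=0$" to "the $\lambda$-derivative vanishes for every $\lambda$" requires a careful bookkeeping argument showing that the determining equations are propagated along the flow — essentially that the infinitesimal condition at a shifted base point $\Phi_\lambda(x,t)$ coincides with the derivative of the finite condition at parameter $\lambda$. This is where the group structure of $(T_\lambda)$ established via Theorem \ref{teo ricostruzione del flusso} and the composition law of Definition \ref{legge di composizione} must be invoked, together with the requirement that $V$ actually generates a genuine one-parameter group on $M$ (a completeness condition on the vector field $Y$, which is why it appears as a hypothesis). The forward direction and the algebra of collecting the five derivative contributions in each equation are routine once the chain-rule conventions for the composition $\circ\,\Phi_\lambda^{-1}$ are fixed; I would record those sign conventions explicitly before computing. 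Full details being standard, I would cite \cite{paper2020} for the complete argument and present here only the differentiation at $\lambda=0$ that produces \eqref{equazione determinante 1.1}.
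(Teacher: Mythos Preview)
Your proposal is correct and matches the paper's approach exactly: the paper's own proof consists of the single sentence ``The main idea is deriving (\ref{simmetria estesa 1}) w.r.t.\ the parameter. See~\cite{paper2020} for a complete proof,'' and what you have written is precisely a fleshed-out version of that differentiation, including the converse integration argument and the citation to \cite{paper2020} for full details. Your derivative computations at $\lambda=0$ (including the handling of $\partial_\lambda\Phi_\lambda^{-1}|_0=-Y$, $\partial_\lambda B_\lambda^{-1}|_0=-C$, and the identification of $-Y(\sigma)+D(Y)\sigma=-[Y,\sigma]$) are correct and recover \eqref{equazione determinante 1.1}.
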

\begin{proof}
    The main idea is deriving (\ref{simmetria estesa 1}) w.r.t. the parameter. See ~\cite{paper2020} for a complete proof.
\end{proof}
\subsection{A novel notion of invariance of an SDE}

According to Definition \ref{PT}, we defined a symmetry as a transformation which leaves the SDE invariant, where, as 'invariant', we meant 'exactly the same SDE'. This notion, that is very close to the one applied in the deterministic framework, led to determining equations \eqref{simmetria estesa 1}, characterizing a symmetry $ T$, basically preserving the SDE coefficients $\mu$ and $\sigma:$ \[E_T(\mu)=\mu \ \ \ E_T(\sigma)=\sigma.\]
On the other hand, this is not the only possible choice for a notion of invariance for an SDE. Indeed, since Brownian motion is rotational-invariant, given a smooth function $ B: M \rightarrow SO(m)$ and  a weak solution $ (X,W)$ to $ SDE_{\mu,\sigma}$, then $(X,W')$ is a weak solution to $\ SDE_{\mu, \sigma B^{-1}}$, where $ W'_t=\int_0^t B(X_s,s) dW_s$. Recalling Proposition \ref{Invariance under random rotations of the infinitesimal generator}, since two SDEs differ for a rotation of Brownian motion if and only if they share the same infinitesimal generator, we have that weak solutions to a Brownian motion driven SDE are not identified by the coefficients $\mu,\sigma$ of the SDE but by the generator $ L$ given in \eqref{L}. Hence, we can consider a new notion of invariance for an SDE, and a corresponding new notion of symmetry. 

\begin{definizione}\label{G 1}
A (weak) stochastic transformation $ T=(\Phi, B, \eta,h)$ is a $\mathcal{G}-$weak symmetry of a given $ SDE_{\mu,\sigma}$ if, for any $ X$ solution of the martingale problem $(\mu,\sigma\sigma^T)$,  $ P_T(X)$ remains a solution of the same martingale problem. 
\end{definizione}
\begin{osservazione}
    The notion of $\mathcal{G}-$weak symmetry embraces the one of weak symmetry given in the previous section. The converse is not true: a $\mathcal{G}-$weak symmetry is not in general a weak symmetry. 
\end{osservazione}
Once again, it is possible to characterize this new notion of symmetry in terms of the coefficients of an SDE:
\begin{teorema}\label{finite_transformation_coefficient}
    A stochastic symmetry $\ T=(\Phi, B,\eta,h) \in S_m(M)$ is a $\mathcal{G}-$weak symmetry of $\ SDE_{\mu,\sigma}$ if and only if
    \begin{equation}\label{g weak fin}
        \mu=\Big(\frac{1}{\eta}[L(\Phi)+D(\Phi)\sigma h]\Big)\circ\Phi^{-1}, \ \ \ \ \ \ \ \ \ \ \sigma \sigma^T=\Big(\frac{1}{\eta} D(\Phi) \sigma \sigma^T D(\Phi)^T\Big) \circ \Phi^{-1}.
    \end{equation}
\end{teorema}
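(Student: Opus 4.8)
The plan is to reduce the statement about the $\mathcal{G}$-weak symmetry to the already-proved characterization of ordinary weak symmetries (Theorem~\ref{equazioni determinanti finite}) by composing the transformation $T$ with a suitable random rotation. Recall that, by Definition~\ref{G 1}, $T=(\Phi,B,\eta,h)$ is a $\mathcal{G}$-weak symmetry of $SDE_{\mu,\sigma}$ precisely when $P_T(X)$ solves the martingale problem $(\mu,\sigma\sigma^T)$ whenever $X$ does. By the equivalence between the martingale-problem formulation and the weak-solution formulation (the Stroock--Varadhan result recalled in the remark after the definition of the martingale problem), and by Proposition~\ref{Invariance under random rotations of the infinitesimal generator}, solving the martingale problem $(\mu,\sigma\sigma^T)$ is the same as being a weak solution of \emph{some} $SDE_{\mu,\tilde\sigma}$ with $\tilde\sigma\tilde\sigma^T=\sigma\sigma^T$, i.e. of some element of $\mathcal{Gauge}(SDE_{\mu,\sigma})$. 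Hence $T$ is a $\mathcal{G}$-weak symmetry iff $E_T(SDE_{\mu,\sigma})\in\mathcal{Gauge}(SDE_{\mu,\sigma})$, that is, iff $E_T(\mu)=\mu$ and $E_T(\sigma)E_T(\sigma)^T=\sigma\sigma^T$.

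The second step is to make this explicit using the formula for $E_T$ from Theorem~\ref{trasformate}:
\begin{equation*}
E_T(\mu)=\Big(\tfrac1\eta[L(\Phi)+D(\Phi)\sigma h]\Big)\circ\Phi^{-1},\qquad
E_T(\sigma)=\Big(\tfrac1{\sqrt\eta}D(\Phi)\sigma B^{-1}\Big)\circ\Phi^{-1}.
\end{equation*}
The condition $E_T(\mu)=\mu$ gives directly the first equation in \eqref{g weak fin}. For the diffusion part, since $B\in SO(m)$ we have $B^{-1}(B^{-1})^T=I_m$, so
\begin{equation*}
E_T(\sigma)E_T(\sigma)^T=\Big(\tfrac1\eta D(\Phi)\sigma B^{-1}(B^{-1})^T\sigma^T D(\Phi)^T\Big)\circ\Phi^{-1}
=\Big(\tfrac1\eta D(\Phi)\sigma\sigma^T D(\Phi)^T\Big)\circ\Phi^{-1},
\end{equation*}
and imposing that this equals $\sigma\sigma^T$ yields exactly the second equation in \eqref{g weak fin}. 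Conversely, if \eqref{g weak fin} holds, then $E_T(\mu)=\mu$ and $E_T(\sigma)E_T(\sigma)^T=\sigma\sigma^T$, so by Proposition~\ref{Invariance under random rotations of the infinitesimal generator} there is $\rho\in O(m)$ with $E_T(\sigma)=\sigma\rho$; then $X'=P_T(X)$ is a weak solution of $SDE_{\mu,\sigma\rho}$, which has the same generator $L$ as $SDE_{\mu,\sigma}$, hence $X'$ solves the martingale problem $(\mu,\sigma\sigma^T)$, and $T$ is a $\mathcal{G}$-weak symmetry.

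The point requiring care — the main obstacle — is the bookkeeping of the probability-space changes introduced by the time change $\eta$ and the Girsanov drift $h$: Theorem~\ref{trasformate} produces $P_T(X,W)$ as a weak solution on a \emph{transformed} filtered probability space $(\Omega,\mathcal F',\mathbb Q)$, so one must check that ``$P_T(X)$ solves the martingale problem $(\mu,\sigma\sigma^T)$'' in Definition~\ref{G 1} is interpreted with respect to that transformed space (which is the natural reading, since the martingale-problem formulation is intrinsically a statement about the \emph{law} of the process and does not see the underlying space). Once this is fixed, the equivalence ``weak solution of an SDE with diffusion matrix $\tilde\sigma$'' $\Longleftrightarrow$ ``solves the martingale problem $(\mu,\tilde\sigma\tilde\sigma^T)$'' is exactly the Stroock--Varadhan equivalence already recalled, and the rest is the direct algebraic manipulation above. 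I would also note explicitly that $\Phi=(\tilde\Phi,f)$ with $f$ the time change, so that the composition $\circ\Phi^{-1}$ in \eqref{g weak fin} already absorbs the time reparametrization, leaving nothing further to verify on that front.
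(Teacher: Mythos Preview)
Your argument is correct and is essentially a fleshed-out version of what the paper does: the paper's proof is the single line ``immediate application of Theorem 8.4.3 in \cite{oksendal}'', i.e.\ the same martingale-problem/weak-solution equivalence you invoke, together with the observation that the generator depends only on $(\mu,\sigma\sigma^T)$. Your explicit computation of $E_T(\sigma)E_T(\sigma)^T$ via Theorem~\ref{trasformate} and the orthogonality of $B$, and your care about the transformed probability space, make the argument self-contained rather than relying on the external reference.
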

\begin{proof}
    The proof in an immediate application of Theorem 8.4.3 in \cite{oksendal}.
\end{proof}
Even in this case, determining equations are more effective in the infinitesimal setting, since they are more simple to solve than (\ref{g weak fin}).
\begin{teorema}\label{new-determining-equation}
    Let $\ V=(Y,C,\tau, H)$ be a stochastic infinitesimal transformation. Then $\ V$ is an infinitesimal $\mathcal{G}-$ weak symmetry of $\ SDE_{\mu,\sigma}$ if and only if $\ V$ generates a one-parameter group on $\ M$ and the following determining equations hold:
    \begin{equation}\label{g weak inf}
    Y(\mu)-L(Y)-\sigma H + \tau \mu =0, \ \ \ \ [Y,\sigma \sigma^T]+\tau \sigma \sigma^T=0,      
    \end{equation}
    where $\ [Y,\sigma \sigma^T]=Y(\sigma \sigma^T)-D(Y)\sigma \sigma^T -\sigma \sigma^T D(Y)^T$.
\end{teorema}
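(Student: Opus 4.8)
The plan is to derive the infinitesimal determining equations \eqref{g weak inf} from their finite counterpart \eqref{g weak fin} by differentiating along the one-parameter group $T_\lambda=(\Phi_\lambda,B_\lambda,\eta_\lambda,h_\lambda)$ at $\lambda=0$, in exactly the spirit of the proof of Theorem \ref{equazioni determinanti}. First I would assume $V=(Y,C,\tau,H)$ generates a one-parameter group $T_\lambda$ on $M$ (this is the standing nondegeneracy hypothesis), so that by Theorem \ref{teo ricostruzione del flusso} the components $\Phi_\lambda,f_\lambda,\eta_\lambda,h_\lambda$ solve the reconstruction system \eqref{ricostruzione del flusso} with the stated initial conditions $\Phi_0=\mathrm{Id}$, $f_0=\mathrm{id}$, $B_0=I_m$, $\eta_0=1$, $h_0=0$. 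By Theorem \ref{finite_transformation_coefficient}, $T_\lambda$ is a $\mathcal{G}$-weak symmetry for every $\lambda$ if and only if the pair \eqref{g weak fin} holds with $(\Phi,B,\eta,h)$ replaced by $(\Phi_\lambda,B_\lambda,\eta_\lambda,h_\lambda)$; differentiating each of these two identities in $\lambda$ and evaluating at $\lambda=0$ will produce \eqref{g weak inf}.

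The two equations are handled separately. For the drift equation $\mu=\big(\tfrac{1}{\eta_\lambda}[L(\Phi_\lambda)+D(\Phi_\lambda)\sigma h_\lambda]\big)\circ\Phi_\lambda^{-1}$, the left side is $\lambda$-independent, so the $\lambda$-derivative of the right side at $0$ must vanish. Using $\partial_\lambda\Phi_\lambda|_0=Y$, $\partial_\lambda\eta_\lambda|_0=\tau$, $\partial_\lambda h_\lambda|_0=\sqrt{\eta_0}B_0^{-1}H=H$ (from \eqref{ricostruzione del flusso}), and the standard chain-rule identity for differentiating an expression of the form $(G_\lambda)\circ\Phi_\lambda^{-1}$ at the identity — which contributes a $-Y(\,\cdot\,)$ Lie-derivative term coming from $\partial_\lambda\Phi_\lambda^{-1}|_0=-Y$ — the computation is word-for-word the one already carried out in \cite{paper2020} for the first determining equation in \eqref{equazione determinante 1.1}, because the drift part of \eqref{g weak fin} is identical to the drift part of \eqref{simmetria estesa 1}. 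This yields $Y(\mu)-L(Y)-\sigma H+\tau\mu=0$. The only genuinely new piece is the second equation: differentiating $\sigma\sigma^T=\big(\tfrac{1}{\eta_\lambda}D(\Phi_\lambda)\,\sigma\sigma^T\,D(\Phi_\lambda)^T\big)\circ\Phi_\lambda^{-1}$ at $\lambda=0$. Here $\partial_\lambda D(\Phi_\lambda)|_0=D(Y)$, $\partial_\lambda(1/\eta_\lambda)|_0=-\tau$, the composition with $\Phi_\lambda^{-1}$ again contributes $-Y(\sigma\sigma^T)$, and collecting terms gives $0 = -\tau\,\sigma\sigma^T + D(Y)\sigma\sigma^T + \sigma\sigma^T D(Y)^T - Y(\sigma\sigma^T)$, i.e. $[Y,\sigma\sigma^T]+\tau\sigma\sigma^T=0$ with the bracket as defined in the statement. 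Conversely, one must check that the infinitesimal equations \eqref{g weak inf}, together with the one-parameter group hypothesis, force \eqref{g weak fin} to hold for all $\lambda$: this is the usual uniqueness-of-ODE-flow argument — the quantities $\mu-\big(\tfrac{1}{\eta_\lambda}[\cdots]\big)\circ\Phi_\lambda^{-1}$ and $\sigma\sigma^T-\big(\tfrac{1}{\eta_\lambda}D(\Phi_\lambda)\sigma\sigma^TD(\Phi_\lambda)^T\big)\circ\Phi_\lambda^{-1}$ satisfy a linear ODE in $\lambda$ with zero initial condition, hence vanish identically — and again mirrors the argument in \cite{paper2020}.

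I expect the main obstacle to be bookkeeping rather than conceptual: correctly tracking the terms produced by differentiating the composition with $\Phi_\lambda^{-1}$ and by the $D(\Phi_\lambda)\cdots D(\Phi_\lambda)^T$ sandwich, and verifying that the symmetrized combination $D(Y)\sigma\sigma^T+\sigma\sigma^TD(Y)^T-Y(\sigma\sigma^T)$ is precisely the announced bracket $[Y,\sigma\sigma^T]$ and is consistent with $[Y,\sigma]$ appearing in \eqref{equazione determinante 1.1} via the Leibniz rule $[Y,\sigma\sigma^T]=[Y,\sigma]\sigma^T+\sigma[Y,\sigma]^T$. Since the drift equation is literally unchanged from Theorem \ref{equazioni determinanti} and the diffusion equation is obtained from the finite characterization in Theorem \ref{finite_transformation_coefficient} by the same differentiation technique, the cleanest write-up is to reduce everything to the already-established computations in \cite{paper2020}, highlighting only the modified second component. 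One should also note that the hypothesis that $V$ generates a one-parameter group on $M$ cannot be dropped, exactly as in the earlier theorems, since the reconstruction of $T_\lambda$ and the ODE-uniqueness step both rely on it.
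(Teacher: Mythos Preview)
Your proposal is correct and follows essentially the same approach as the paper: differentiate the finite characterization \eqref{g weak fin} along the flow $T_\lambda$ at $\lambda=0$ to obtain \eqref{g weak inf}, and for the converse use an ODE-uniqueness argument (the paper phrases it as showing the constant function $\lambda\mapsto\sigma\sigma^T$ solves $\partial_\lambda S_\lambda=-[Y,S_\lambda]-\tau S_\lambda$, which is equivalent to your ``difference satisfies a linear ODE with zero initial data'' formulation). The drift equation is indeed handled by direct reference to the earlier result, exactly as you suggest.
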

\begin{proof}
    If $\ V$ is an infinitesimal $\mathcal{G}-$weak symmetry, by costruction it generates a one-parameter group $\ T_{\lambda}=(\Phi_{\lambda}, B_{\lambda}, \eta_{\lambda}, h_{\lambda})$ which satisfies equations (\ref{g weak fin}): \[  \mu=\Big(\frac{1}{\eta_{\lambda}}[L(\Phi_{\lambda})+D(\Phi_{\lambda})\sigma h_{\lambda}]\Big)\circ\Phi_{\lambda}^{-1}, \ \ \ \ \ \ \ \ \ \ \sigma \sigma^T=\Big(\frac{1}{\eta_{\lambda}} D(\Phi_{\lambda}) \sigma \sigma^T D(\Phi_{\lambda})^T\Big)\circ\Phi_{\lambda}^{-1}.\]
    After deriving these equations with respect to the parameter $\lambda$ and applying straightforward Theorem \ref{teo ricostruzione del flusso}, we obtain (\ref{g weak inf}).\\
    Conversely, suppose that (\ref{g weak inf}) holds and that $ V$ generates a one-parameter group $ T_{\lambda}=(\Phi_{\lambda}, B_{\lambda}, \eta_{\lambda}, h_{\lambda}).$ The first equation is the same as in Theorem \ref{equazioni determinanti}, and the proof is identical; therefore, we refer the reader to \cite{paper2015}. Concerning the second equation, let us define
    define
    \[ \sigma_{\lambda}:=\Big(\frac{1}{\sqrt{\eta_{\lambda}}}D(\Phi_{\lambda})\sigma B_{\lambda}^{-1}\Big)\circ\Phi_{\lambda}^{-1}\ \ \ \ \ \ \ \ \ \ S_{\lambda}:=\Big(\frac{1}{\eta_{\lambda}} D(\Phi_{\lambda}) \sigma \sigma^T D(\Phi_{\lambda})^T\Big)\circ\Phi_{\lambda}^{-1}\]
    and consider $\tilde{\sigma}_{\lambda}:=\sigma_{\lambda}\circ \Phi_{\lambda}, \tilde{S}_{\lambda}:=S_{\lambda}\circ \Phi_{\lambda}.$ Then from Theorem \ref{teo ricostruzione del flusso} we have
    \[\partial_\lambda \tilde{S}_\lambda =
- (\tau\circ \Phi_{\lambda} )\tilde{S}_\lambda + (D(Y) \circ \Phi_\lambda) \cdot \tilde{S}_\lambda + \tilde{S}_\lambda \cdot (D(Y)^\top \circ \Phi_\lambda),\]
from which
\[  \partial_\lambda S_\lambda = -[Y, S_\lambda] - \tau S_\lambda,\]
 that is, for $ x$ fixed, an ordinary differential equation in $\lambda$ admitting a unique solution for any initial condition $ S_0=\sigma\sigma^T.$ Furthermore, since $\partial_\lambda S_0=0$ and since (\ref{g weak inf}) holds, then $ S_0$ solves the ODE above, and so, by uniqueness of solutions, $ S_\lambda=S_0$, that is, $ S_{\lambda}=\sigma\sigma^T$, i.e.
 \[\sigma \sigma^T=\Big(\frac{1}{\eta_{\lambda}} D(\Phi_{\lambda}) \sigma \sigma^T D(\Phi_{\lambda})^T\Big)\circ\Phi_{\lambda}^{-1}. \]But this implies that (\ref{g weak fin}) holds: $V$ generates a $\mathcal{G}-$weak finite symmetry $ T_{\lambda}$, and so $V$ is a $\mathcal{G}-$weak infinitesimal symmetry.
\end{proof}
\begin{osservazione}
    We already noticed that the notion of $\mathcal{G}-$weak symmetry is more general than the one of weak symmetry. Indeed, \eqref{equazione determinante 1.1} implies \eqref{g weak inf} since $C$ is an antisymmetric matrix. The equivalence between these two notions of symmetry can be proved only under additional regularity conditions on $\sigma^T \sigma$, namely its invertibility. In fact, using \eqref{g weak fin} we can write $\sigma$ as \[\sigma=\sigma\sigma^T \sigma (\sigma^T\sigma)^{-1}=\Big(\frac{1}{\eta}D(\Phi)\sigma\sigma^TD(\Phi)^T)\circ \Phi^{-1}\sigma (\sigma^T \sigma)^{-1},\]
and to ensure that $\ T$ is also a weak symmetry (i.e. to satisfy the condition in Theorem \ref{equazioni determinanti finite}), we are forced to impose \[ B^{-1}\circ \Phi^{-1}=\Big(\frac{1}{\sqrt{\eta}}\sigma^T D(\Phi)^T\Big)\circ \Phi^{-1}\sigma (\sigma^T\sigma)^{-1},\]
    from which $ B$ is fully recovered. An easy calculation shows that under this choice $ B$ is orthogonal, as required. The question of the invertibility of $\sigma^T\sigma (x)$ arises as part of the broader problem referred to in the literature as the \textit{square root of a matrix field} (see, e.g., \cite{Rpgerswilliams}, Chapter 2, Section 12 and the discussion in Section ~\ref{Invariance under random rotations of the infinitesimal generator}).
\end{osservazione}
\begin{osservazione}
Neither the finite nor the infinitesimal determining equations involve the rotation matrix (i.e. $B$ and $C$ in the finite and infinitesimal settings, respectively). As a result, solving these equations yields symmetries without any rotational component. Nonetheless, any rotation is itself a $\mathcal{G}$-weak symmetry, so precomposing a three-component $\mathcal{G}$-weak symmetry with \emph{any} rotation gives rise to a four-component $\mathcal{G}$-weak symmetry.\\
A particularly relevant case is when a transformation $T = (\Phi, B, \eta, h)$ is such  that $T_1 = (\Phi, \eta, h)$ is a weak symmetry in the sense of Definition~\ref{PT}. Then, $T$ can be written as
\[
T = R_{B \circ \Phi^{-1}} \circ T_1,
\]
where $R_{B \circ \Phi^{-1}} = (\mathrm{Id}, B \circ \Phi^{-1}, 1, 0)$ is a rotation (i.e., a gauge symmetry), and $T_1 = (\Phi, I_m, \eta, h)$ is a three-component weak symmetry. Since $T_1$ is a weak symmetry, it follows that $T$ is a $\mathcal{G}$-weak symmetry. In other words, this specific subclass of $\mathcal{G}$-weak symmetries—obtained by composing a gauge symmetry with a three-component weak symmetry—can be fully characterized without involving the matrix $B$ in the determining equations. In this case, the infinitesimal determining equations for $\ T$ reduce to those of standard weak symmetries for $\ T_1$, i.e. to ~\eqref{equazione determinante 1.1}  with the infinitesimal rotation matrix $C$ identically zero.
\end{osservazione}

As it will be explained in the following sections, this more general notion of symmetry is useful to better explain the rotational invariance of the integration by parts formula. Moreover, since the proofs of the quasi-invariance principle and of the integration by parts formula lie on the invariance of the law of the solution process under the action of a symmetry, $\mathcal{G}-weak$ symmetry is the most general notion of symmetry under which the two theorems remain valid.

\section{Quasi-invariance principle and analytical setting}\label{quasi invariance principle}

\subsection{A quasi-invariance principle via Lie symmetry theory}\label{quasi-invariance principle}
An analogue of the Bismut-Malliavin quasi-invariance principle was established in \cite{paper2023} using Lie symmetry theory applied to SDEs. The symmetries considered include spatial diffeomorphisms, deterministic time changes and Girsanov-type measure changes. In this work, we extend the framework by including Brownian motion rotations among the admissible transformations. This leads to a new derivation of the invariance principle and the associated integration by parts formula. Although the formulas remain formally unchanged, their derivation requires addressing new, non-trivial analytical terms.

\begin{teorema}[Quasi-invariance principle]\label{principio di quasi-invarianza}
Let $ (X,W)$ be a weak solution to a non-explosive $ SDE_{\mu,\sigma}$ and let $ (X',W')=(P_T(X),P_T(W))$ be the transformed process via the finite stochastic transformation $ T=(\Phi, B, \eta,h)$, where $ h$ is a predictable stochastic process that forms a non-explosive vector field for $SDE_{\mu,\sigma}$. If $T$ is a (finite) symmetry of $SDE_{\mu,\sigma}$, then, for any fixed time $ t \in [0,T]$ and for any measurable and bounded function $ g$, the following quasi-invariance principle holds: \begin{equation}\label{quasi-invarianza} \mathbb{E}_{\mathbb{P}}[g(X)]=\mathbb{E}_{\mathbb{P}}\Bigg[g(X') \exp\Big(\int_0^t h_{\alpha}(X_s,s) dW^{\alpha}_s - \frac{1}{2}\int_0^t h_{\alpha}^2(X_s,s)ds\Big) \Bigg]
\end{equation}
\end{teorema}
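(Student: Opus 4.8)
The plan is to combine the \emph{double action} of a stochastic transformation (Theorem~\ref{trasformate}) with uniqueness in law for $SDE_{\mu,\sigma}$ and an elementary change of measure. First I would apply Theorem~\ref{trasformate} to the given weak solution $(X,W)$: since $h$ and $\eta$ are non-explosive for $SDE_{\mu,\sigma}$, the transformed pair $(X',W')=(P_T(X),P_T(W))$ is a weak solution of $E_T(SDE_{\mu,\sigma})$ on the filtered probability space $(\Omega,\mathcal{F}',\mathbb{Q})$, where $\mathcal{F}'_t=\mathcal{F}_{f^{-1}(t)}$ and $\mathbb{Q}$ is the measure with density process
\[
Z_t:=\frac{d\mathbb{Q}}{d\mathbb{P}}\Big|_{\mathcal{F}_t}=\exp\!\Big(\int_0^t h_\alpha(X_s,s)\,dW^\alpha_s-\tfrac12\int_0^t h_\alpha^2(X_s,s)\,ds\Big).
\]
The non-explosivity of $h$ is exactly what guarantees, through Girsanov's theorem, that $(Z_t)$ is a genuine $\mathbb{P}$-martingale and hence that $\mathbb{Q}$ is a probability measure of total mass one.

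Since $T$ is a finite symmetry of $SDE_{\mu,\sigma}$, Definition~\ref{PT} (equivalently the coefficient characterization of Theorem~\ref{equazioni determinanti finite}) gives $E_T(SDE_{\mu,\sigma})=SDE_{\mu,\sigma}$, so $X'$ is in fact a weak solution of the \emph{original} equation, started from the same point as $X$, only with respect to $\mathbb{Q}$ instead of $\mathbb{P}$. Invoking uniqueness in law for $SDE_{\mu,\sigma}$ --- valid under the standing regularity assumptions on $\mu,\sigma$ through the well-posedness of the associated martingale problem (Stroock--Varadhan; cf.\ the uniqueness statement recalled in Section~\ref{Invarian under random rotations}) --- yields $\mathrm{Law}_{\mathbb{Q}}(X')=\mathrm{Law}_{\mathbb{P}}(X)$, and therefore $\mathbb{E}_{\mathbb{Q}}[g(X')]=\mathbb{E}_{\mathbb{P}}[g(X)]$ for every bounded measurable $g$. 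Finally, I would pass back to $\mathbb{P}$: boundedness of $g$ makes $g(X')$ integrable against $Z$, and since $g(X')$ is measurable with respect to the $\sigma$-algebra generated up to time $t$, the martingale property of $(Z_t)$ lets me replace the terminal density by $Z_t$, giving $\mathbb{E}_{\mathbb{Q}}[g(X')]=\mathbb{E}_{\mathbb{P}}[g(X')\,Z_t]$, which is exactly the right-hand side of \eqref{quasi-invarianza}.

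The main obstacle is not this chain of equalities but the analytic bookkeeping hidden in the first and last steps, which is precisely where the ``new, non-trivial analytical terms'' produced by admitting random rotations of $W$ appear. One has to keep track of how the deterministic time change $\eta$ --- which reparametrizes the trajectory of $X'$ and turns $\mathcal{F}_t$ into $\mathcal{F}_{f^{-1}(t)}$ --- and the Brownian rotation $B$ interact with the Girsanov exponent, so that the density appearing in \eqref{quasi-invarianza} is genuinely the one restricted to the correct $\sigma$-algebra and $g(X')$ lies in it; one must also stay within the class of symmetries preserving the initial datum, so that the two laws compared above are actually comparable; and one must verify the integrability and true-martingale conditions that legitimize the change of measure. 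All of these are controlled by the non-explosivity hypotheses on $h$ and $\eta$ together with the structural results recalled in Section~\ref{Finite_stochastic_transformations_SDE}, and this is the step where the present argument genuinely extends the one of \cite{paper2023}, in which $B$ was absent.
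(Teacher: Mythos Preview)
Your proof is correct and follows essentially the same route as the paper: use the symmetry to conclude that $X'$ under $\mathbb{Q}$ solves the same $SDE_{\mu,\sigma}$ and hence has the same law as $X$ under $\mathbb{P}$, then convert $\mathbb{E}_{\mathbb{Q}}[g(X')]$ back to a $\mathbb{P}$-expectation via the Radon--Nikodym density. The paper's argument is slightly terser (it does not explicitly invoke Stroock--Varadhan uniqueness, simply asserting equality of laws), but the logic is the same.

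One correction, however: your final paragraph mislocates the technical difficulties. The paper explicitly remarks, immediately after this proof, that the inclusion of the Brownian rotation $B$ does \emph{not} affect the Radon--Nikodym density or the quasi-invariance argument in any way; the proof relies only on the symmetry property of $T$ and is identical to the case without rotation. The ``new, non-trivial analytical terms'' you refer to appear only later, in Lemma~\ref{lemma tecnico mb} and the proof of the integration by parts formula (Theorem~\ref{teo integrazione per parti 2}), where one must control $\partial_\lambda^2$ of the density along the one-parameter group and the rotation matrix $B_\lambda$ genuinely enters through $\partial_\lambda h_\lambda$. So the ``analytic bookkeeping'' you describe is real, but it belongs to Section~\ref{analytical considerations}, not to this theorem.
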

\begin{proof} 
Since $ T$ is a symmetry, the transformed process $ (X',W')$ is a solution to the same $ SDE_{\mu,\sigma}$ (cf. Definition \ref{PT}) and, in particular, $ X'$ under $\mathbb{Q}$ has the same law as $ X$ under $\mathbb{P}$, from which 
\begin{equation}\label{a}
\mathbb{E}_{\mathbb{P}}[g(X)]= \mathbb{E}_{\mathbb{Q}}[g(X')]. 
\end{equation}
Applying the Radon-Nikodym theorem and recalling that
\[
\frac{d\mathbb{Q}}{d\mathbb{P}}|_{\mathcal{F}t}=\exp\Big(\int_0^t h_{\alpha}(X_s,s) dW^{\alpha}_s - \frac{1}{2}\int_0^t h_{\alpha}^2(X_s,s)ds\Big)
\]
for every fixed $ t \in [0,T]$, then (\ref{a}) can be rewritten as \begin{equation*} 
\mathbb{E}_{\mathbb{P}}[g(X)]= \mathbb{E}_{\mathbb{P}}\Bigg[g(X')\exp\Big(\int_0^t h_{\alpha}(X_s,s) dW^{\alpha}_s - \frac{1}{2}\int_0^t h_{\alpha}^2(X_s,s)ds\Big)\Bigg].
\end{equation*}
\end{proof} 
\begin{osservazione} 
The regularity and boundedness assumptions on \( g \), together with the non-explosiveness assumptions, guarantee that both sides of \eqref{quasi-invarianza} are well defined. In particular, the non-explosiveness of \( h \) ensures that \( \frac{d\mathbb{Q}}{d\mathbb{P}} \) is a local martingale, allowing the application of Girsanov’s theorem to establish the equivalence of the probability measures \( \mathbb{P} \) and \( \mathbb{Q} \).\end{osservazione} 
\begin{osservazione}
The proof mirrors that of \textit{Theorem (25) in \cite{paper2023}}: the inclusion of Brownian rotation does not affect the Radon-Nikodym density, and the argument relies solely on the symmetry property of the transformation \( T \), independently of the components it acts upon. As discussed in Remark~\ref{remark generatore infinitesimale}, this invariance reflects the well-known invariance of the SDE’s infinitesimal generator under Brownian motion rotations. This observation motivates the introduction of gauge symmetry, regarded as the most general class of symmetries for which the results established in this paper remain valid.
\end{osservazione} 

\noindent In Malliavin-Bismut calculus, the fundamental invariance principle yields an integration by parts formula; a similar result holds here using Lie symmetries. For simplicity, this formula is proved only for functionals of processes at single times\footnote{The generalization to the case of cylindrical functionals of diffusion processes is discussed in \cite{paper2023}}. Some preliminary analytical regularity results, necessary for stating and proving the formula, will be addressed in the next section.

\subsection{Preliminary stochastic and analytical considerations}\label{analytical considerations}
Although closely related, quasi-invariance has an elegant and elementary formulation, while the integration by parts formula requires regularity assumptions and preliminary analytical results to ensure both its well-posedness and the development of its proof. \\

We start by proving the following result on the time change formula for Itô integrals, which slightly generalizes the result in \cite{oksendal} by also including rotation matrices:
\begin{lemma}[Time change formula for It\^o integrals]\label{Oksendal}
 \textcolor{white}{,}\\   Suppose $\eta(s,\omega)$, $\ B(s,\omega)$ and $\alpha(s,\omega)$ are $ s$-continuous, $\alpha(0,\omega)=0$ for a.a. $\omega$, $\mathbb{E}[\alpha_t]<\infty$ and $ B(s,\omega) \in SO(m)$. Let $ W_s$ be an $ m$-dimensional Brownian motion and let $ v(s,\omega)$ be bounded and $ s$-continuous. Define
 \begin{equation}\label{Btilde}
     \tilde{W}_t=\lim_{j \rightarrow \infty} \sum_{j} \sqrt{\eta(\alpha_j,\omega)} B(\alpha_j,\omega)\Delta W_{\alpha_j} = \int_0^{\alpha_t}  \sqrt{\eta(s,\omega)} B(s,\omega) dW_s.
 \end{equation}
 Then  $\tilde{W}_t$ is an $\mathcal{F}_{\alpha_t}$-Brownian motion and
 \begin{equation}\label{cambio tempo stocastico}
     \int_0^{\alpha_t} v(s,\omega) dW_s = \int_0^t v(\alpha_r,\omega) \sqrt{\alpha'_r(\omega)} B^{-1}(\alpha_r,\omega) d\tilde{W}_r \ \ \mathbb{P}-a.s.,
 \end{equation}
 where $ \alpha'_r(\omega)=\frac{1}{\eta(\alpha_r,\omega)} .$
\end{lemma}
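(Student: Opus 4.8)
\emph{Proof idea.} The plan is to follow the strategy of the classical time-change theorem for It\^o integrals in \cite{oksendal}, inserting the rotation field $B$ and exploiting its orthogonality at each step; the only algebraic input added by $B$ is the identity $B B^{T}=B^{T}B=I_m$. Throughout, write $\mathcal{G}_t:=\mathcal{F}_{\alpha_t}$ for the time-changed filtration. Since $r\mapsto\alpha_r$ is continuous, adapted and nondecreasing, each $\alpha_t$ is an $\mathcal{F}_\bullet$-stopping time, so $\mathcal{G}_t$ is well defined; moreover the process $\tilde W$ in \eqref{Btilde} is obtained from the continuous $\mathcal{F}_\bullet$-local martingale $u\mapsto\int_0^u\sqrt{\eta(s,\omega)}\,B(s,\omega)\,dW_s$ by optional sampling along the times $\alpha_t$, hence it is a continuous $\mathcal{G}_\bullet$-local martingale, and the Riemann-sum limit in \eqref{Btilde} is the usual approximation of this time-changed stochastic integral.

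\textbf{Step 1 ($\tilde W$ is a $\mathcal{G}_\bullet$-Brownian motion).} I would invoke L\'evy's characterization. Because $B(s,\omega)\in SO(m)$, the matrix integrand satisfies $(\sqrt\eta\,B)(\sqrt\eta\,B)^{T}=\eta\,B B^{T}=\eta\,I_m$, so the components of $\tilde W$ have covariation
\[
[\tilde W^i,\tilde W^j]_t=\delta_{ij}\int_0^{\alpha_t}\eta(s,\omega)\,ds=\delta_{ij}\,t ,
\]
where the last equality follows from $\tfrac{d}{dr}\!\int_0^{\alpha_r}\eta(s,\omega)\,ds=\eta(\alpha_r,\omega)\,\alpha'_r(\omega)=1$, which is exactly the defining relation $\alpha'_r=1/\eta(\alpha_r,\cdot)$. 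A continuous $\mathcal{G}_\bullet$-local martingale starting at $0$ with this covariation is an $m$-dimensional $\mathcal{G}_\bullet$-Brownian motion; this is the time-changed analogue of the rotational invariance recalled in Proposition~\ref{invarianza per rotazioni}.

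\textbf{Step 2 (the identity \eqref{cambio tempo stocastico}).} Set $M_t:=\int_0^{\alpha_t}v(s,\omega)\,dW_s$ and $N_t:=\int_0^t v(\alpha_r,\omega)\sqrt{\alpha'_r(\omega)}\,B^{-1}(\alpha_r,\omega)\,d\tilde W_r$. Both are continuous $\mathcal{G}_\bullet$-local martingales vanishing at $0$: the integrand of $N$ is $\mathcal{G}_\bullet$-adapted and, since $v$ and $B$ are bounded and $\int_0^t\alpha'_r\,dr=\alpha_t<\infty$ $\mathbb{P}$-a.s., locally square-integrable with respect to $\tilde W$. I would then show $M\equiv N$ by checking that $M-N$ has identically vanishing quadratic variation. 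Using $d[\tilde W^k,\tilde W^l]_r=\delta_{kl}\,dr$, the substitution $s=\alpha_r$ (so $ds=\alpha'_r\,dr$), the relation $\sqrt{\eta(\alpha_r)}\,\alpha'_r=\sqrt{\alpha'_r}$, and the orthogonality $B^{-1}(B^{-1})^{T}=B^{T}B=I_m$, one computes
\[
[M^i,M^j]_t=\int_0^{\alpha_t}(v v^{T})_{ij}(s,\omega)\,ds,\qquad
[N^i,N^j]_t=\int_0^t\alpha'_r\,(v v^{T})_{ij}(\alpha_r,\omega)\,dr=\int_0^{\alpha_t}(v v^{T})_{ij}(s,\omega)\,ds,
\]
and likewise, from $d[M^i,\tilde W^l]_r=\sqrt{\eta(\alpha_r)}\,(v B^{T})_{il}(\alpha_r)\,\alpha'_r\,dr$, that $[M^i,N^j]_t=\int_0^{\alpha_t}(v v^{T})_{ij}(s,\omega)\,ds$ as well. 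Hence $[M^i-N^i,M^j-N^j]_t\equiv0$ for all $i,j$, so $M-N$ is a continuous local martingale with zero quadratic variation starting at $0$, whence $M\equiv N$ $\mathbb{P}$-a.s., which is precisely \eqref{cambio tempo stocastico}.

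\textbf{Main obstacle.} The manipulations in Step~2 are essentially bookkeeping once the framework is fixed; the genuinely delicate point is setting up the time-changed picture rigorously — verifying that $\alpha_t$ is a stopping time, identifying $\mathcal{F}_{\alpha_t}$, checking that $\tilde W$ is a $\mathcal{G}_\bullet$-local martingale so that L\'evy's theorem is applied with respect to the \emph{correct} filtration, and reconciling the Riemann-sum definition in \eqref{Btilde} with the stochastic-integral form. The rotation $B$ adds no real difficulty, since it enters only through $B B^{T}=I_m$; this is exactly why the formula holds verbatim for every $SO(m)$-valued $B$, mirroring at the level of stochastic integrals the invariance of the infinitesimal generator under random rotations.
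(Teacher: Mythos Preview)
Your argument is correct, but it follows a different route from the paper. The paper's proof is a short, purely formal manipulation with Riemann sums: it writes $\int_0^{\alpha_t} v\,dW_s=\lim\sum v(\alpha_j)\Delta W_{\alpha_j}$, inserts $I_m=\big(\sqrt{1/\eta}\,B^{-1}\big)\big(\sqrt{\eta}\,B\big)$ at each partition point, recognizes $\sqrt{\eta(\alpha_j)}B(\alpha_j)\Delta W_{\alpha_j}=\Delta\tilde W_j$ from \eqref{Btilde}, and passes back to the limit; the Brownian property of $\tilde W$ is simply quoted from Section~\ref{Random rotation of Brownian motion}. Your approach instead works entirely inside the continuous-martingale framework: L\'evy's characterization for Step~1, and a quadratic-variation identity $[M-N,M-N]\equiv 0$ for Step~2. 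The paper's proof is shorter and mirrors the classical presentation in \cite{oksendal}, at the cost of relying on the Riemann-sum approximation of the time-changed integral without further justification. Your argument is longer but more self-contained and makes explicit the two facts that actually drive the result: the substitution $\int_0^{\alpha_t}\eta\,ds=t$ and the orthogonality $B^{-1}(B^{-1})^T=I_m$. Both buy the same conclusion; yours also makes transparent why the rotation $B$ is ``invisible'' in the final formula, which is precisely the point the paper is trying to emphasize.
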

\begin{proof}
    The existence of the limit and the second identity in \eqref{Btilde}, together with the fact that $\tilde{W}$ is still a Brownian motion, have already been proved in Section \ref{Random rotation of Brownian motion}. It remains to prove \eqref{cambio tempo stocastico}: using the fact that It\^o integrals can be seen as limits of Riemannian sums \footnote{c.f.r. Proposition 7.4 in Baldi-Stochastic calculus}, we have 
    \begin{equation*}
        \int_0^{\alpha_t} v(s,\omega) dW_s=\lim_{j \rightarrow \infty} \sum_j v(\alpha_j, \omega) \Delta W_{\alpha_j}=
        \end{equation*}
    \begin{equation*}
        = \lim_{j \rightarrow \infty} \sum_j v(\alpha_j, \omega)\sqrt{\frac{1}{\eta(\alpha_j,\omega)}} B(\alpha_j,\omega)^{-1} \Delta \tilde{W}_j=\int_0^t v(\alpha_r,\omega)\sqrt{\frac{1}{\eta(\alpha_r,\omega)}} B(\alpha_r,\omega)^{-1} d\tilde{W}_r.
    \end{equation*}
\end{proof}
In the following it will be useful to remember the conditions under which it is possible to transport the derivative with respect to a parameter inside an expectation.

\begin{lemma}\label{derivata sotto valore atteso}
  Let $\ g(\lambda, X) : \mathbb{R} \times M \rightarrow \mathbb{R}$ be an integrable function for all $\lambda$, differentiable with continuity with respect to $\lambda$ and such that $\mathbb{E}_{\mathbb{P}}[|\partial^2_{\lambda}g(\lambda,X)|]<\infty.$ Then
  \begin{equation*}
      \partial_{\lambda}\mathbb{E}_{\mathbb{P}}[g(\lambda,X)] = \mathbb{E}_{\mathbb{P}}[\partial_{\lambda}g(\lambda,X)].
  \end{equation*}
\end{lemma}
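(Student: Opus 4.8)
The plan is to prove Lemma~\ref{derivata sotto valore atteso} as a standard application of the mean value theorem together with the dominated convergence theorem, using the hypothesis on the second derivative to produce the required domination. First I would fix $\lambda$ and, for $h\neq 0$, write the difference quotient
\[
\frac{g(\lambda+h,X)-g(\lambda,X)}{h}=\partial_\lambda g(\lambda,X)+\frac{1}{h}\Big(g(\lambda+h,X)-g(\lambda,X)-h\,\partial_\lambda g(\lambda,X)\Big),
\]
and apply Taylor's formula with integral (or Lagrange) remainder in the variable $\lambda$: since $g$ is twice differentiable in $\lambda$, there exists $\xi=\xi(h,X)$ between $\lambda$ and $\lambda+h$ with
\[
g(\lambda+h,X)-g(\lambda,X)-h\,\partial_\lambda g(\lambda,X)=\tfrac{1}{2}h^2\,\partial_\lambda^2 g(\xi,X).
\]
Hence the difference quotient equals $\partial_\lambda g(\lambda,X)+\tfrac12 h\,\partial_\lambda^2 g(\xi,X)$, which converges pointwise (in $\omega$) to $\partial_\lambda g(\lambda,X)$ as $h\to 0$.

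Next I would set up the domination. One cannot directly bound $\partial_\lambda^2 g(\xi,X)$ by $\partial_\lambda^2 g(\lambda,X)$, so the clean way is to restrict to $|h|\le 1$ and use
\[
\Big|\frac{g(\lambda+h,X)-g(\lambda,X)}{h}\Big|
=\Big|\int_0^1 \partial_\lambda g(\lambda+sh,X)\,ds\Big|
\le |\partial_\lambda g(\lambda,X)|+\int_{-1}^{1}|\partial_\lambda^2 g(\lambda+u,X)|\,du,
\]
where the last step again uses the fundamental theorem of calculus in $\lambda$ to control $\partial_\lambda g(\lambda+sh,X)$ by $\partial_\lambda g(\lambda,X)$ plus an integral of $\partial_\lambda^2 g$. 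The right-hand side is independent of $h$; to make it an admissible dominating function I would observe that its expectation is finite: $\mathbb{E}_{\mathbb P}[|\partial_\lambda g(\lambda,X)|]<\infty$ because $\partial_\lambda g(\lambda,\cdot)$ is the pointwise limit of the integrable difference quotients (or can be taken as a standing integrability assumption implicit in ``differentiable with continuity''), and $\mathbb{E}_{\mathbb P}\big[\int_{-1}^{1}|\partial_\lambda^2 g(\lambda+u,X)|\,du\big]=\int_{-1}^{1}\mathbb{E}_{\mathbb P}[|\partial_\lambda^2 g(\lambda+u,X)|]\,du<\infty$ by Tonelli together with the hypothesis $\mathbb{E}_{\mathbb P}[|\partial_\lambda^2 g(\lambda,X)|]<\infty$ (applied at each shifted point; in practice this hypothesis is understood uniformly on the relevant $\lambda$-interval).

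Finally I would invoke the dominated convergence theorem: since $\frac{g(\lambda+h,X)-g(\lambda,X)}{h}\to\partial_\lambda g(\lambda,X)$ pointwise and is dominated for $|h|\le 1$ by the integrable function above, we may pass to the limit under the expectation,
\[
\partial_\lambda \mathbb{E}_{\mathbb P}[g(\lambda,X)]
=\lim_{h\to 0}\mathbb{E}_{\mathbb P}\Big[\frac{g(\lambda+h,X)-g(\lambda,X)}{h}\Big]
=\mathbb{E}_{\mathbb P}[\partial_\lambda g(\lambda,X)],
\]
which is the claim. The only genuinely delicate point is the domination step: the hypothesis as literally stated bounds $\partial_\lambda^2 g$ only at the single point $\lambda$, whereas the argument needs a bound on a whole interval around $\lambda$; I would resolve this either by reading the hypothesis as holding locally uniformly in $\lambda$ (the natural reading, given that the statement is about differentiability \emph{at every} $\lambda$), or by noting that on the canonical one-parameter-group applications the relevant $\lambda$ ranges over a compact interval and continuity of $\partial_\lambda^2 g$ in $\lambda$ upgrades the pointwise bound to a uniform one. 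Everything else is routine.
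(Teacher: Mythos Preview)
Your approach is the standard and correct one, and the caveat you flag at the end is exactly the right one: the literal hypothesis bounds $\partial_\lambda^2 g$ only at a single $\lambda$, whereas any DCT-based proof needs an integrable envelope over a neighborhood of $\lambda$. Your resolution (read the hypothesis as locally uniform in $\lambda$, which is the natural reading since the lemma is applied to smooth one-parameter families on compact $\lambda$-intervals) is the appropriate one. One small point: the claim that $\mathbb{E}_{\mathbb P}[|\partial_\lambda g(\lambda,X)|]<\infty$ ``because it is a pointwise limit of integrable difference quotients'' is not a valid justification on its own; rather, deduce it from the integral Taylor remainder, e.g.\ $\partial_\lambda g(\lambda,X)=g(\lambda+1,X)-g(\lambda,X)-\int_0^1(1-s)\,\partial_\lambda^2 g(\lambda+s,X)\,ds$, which is integrable under the same locally-uniform second-derivative hypothesis together with the integrability of $g(\cdot,X)$ for all $\lambda$.

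As for comparison with the paper: the paper does not give a self-contained proof of this lemma at all, it simply refers to \cite{paper2023}, Section~5. So there is no alternative argument to contrast with; your mean-value/Taylor plus dominated convergence argument is precisely the kind of elementary proof one expects behind such a reference.
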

\begin{proof}
See \cite{paper2023}, \textit{Section 5}.
\end{proof}
\noindent
The following lemma concerns continuity and differentiability of stochastic integrals with respect to parameters (see \cite{kunita} for details). 
\begin{lemma}[Kunita]\label{Kunita}
    Let $\Sigma=\mathbb{R}^e$ be the parameter set and let $\ p > min(e,2)$. Suppose that
    \begin{equation}\label{lemma tecnico 1}
        \sup_{\lambda} \mathbb{E}\Bigg[\int_0^T |g_{\lambda}(r)|^p dr \Bigg] < \infty], \ \ \mathbb{E}\Bigg[\int_0^T |g_{\lambda}(r)-g_{\lambda'}(r)|^p dr \Bigg] < c_p |\lambda - \lambda'|^p
    \end{equation}
    where $\ g_{\lambda}(r)$ is $\ n$-times continuously differentiable with respect to $\lambda$ and, for $\ |i|\leq n$, the derivatives $\partial_{\lambda}^ig_{\lambda}(r)$ satisfy $\ (\ref{lemma tecnico 1})$. Then the family of stochastic integrals $\ (\int_0^t g_{\lambda}(r) dW_r)_t$ has a modification
which is $\ n$-times continuously differentiable with respect to $\lambda$ a.s. Forall $\ |i| \leq n$, we have that
    \begin{equation*}
        \partial^i_{\lambda}\int_0^t g_{\lambda}(r) dW_r = \int_o^t \partial^i_{\lambda}g_{\lambda}(r) dW_r \  \ \forall (t,\lambda) \ a.s.
    \end{equation*}
\end{lemma}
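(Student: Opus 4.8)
The plan is to follow the classical route of \cite{kunita}: first transfer the $L^p$ estimates \eqref{lemma tecnico 1} from the integrands $g_\lambda$ to the stochastic integrals $M_\lambda(t):=\int_0^t g_\lambda(r)\,dW_r$ by means of the Burkholder--Davis--Gundy inequality, and then apply a Kolmogorov--Chentsov-type regularity criterion for random fields indexed by the parameter $\lambda$, so that the smoothness assumed on $\lambda\mapsto g_\lambda$ is inherited by $\lambda\mapsto M_\lambda$. I only sketch the argument, referring to \cite{kunita} for the complete (and somewhat technical) details.

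For the base case $n=0$, the Burkholder--Davis--Gundy inequality and Hölder's inequality on $[0,T]$, combined with \eqref{lemma tecnico 1}, give
\[
\mathbb{E}\big[|M_\lambda(t)-M_{\lambda'}(t)|^p\big]
\le C_p\,\mathbb{E}\Big[\Big(\int_0^T|g_\lambda(r)-g_{\lambda'}(r)|^2\,dr\Big)^{p/2}\Big]
\le C_p'\,|\lambda-\lambda'|^p ,
\]
uniformly in $t\in[0,T]$, together with $\sup_\lambda\mathbb{E}\big[\sup_{t\le T}|M_\lambda(t)|^p\big]<\infty$. Coupling this with the standard estimate $\mathbb{E}[|M_\lambda(t)-M_\lambda(s)|^p]\le C|t-s|^{p/2}$ yields a joint Hölder-moment bound on $[0,T]\times\mathbb{R}^e$, to which the Kolmogorov--Chentsov theorem applies — this is where the assumed relation between the integrability exponent $p$ and the dimension $e$ of the parameter set is used — producing a modification of $(t,\lambda)\mapsto M_\lambda(t)$ that is jointly continuous.

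For $n\ge 1$ one argues by induction. Assume the statement up to order $n-1$, fix a coordinate direction $e_j$, and set $N^{j}_\lambda(t):=\int_0^t\partial_{\lambda_j}g_\lambda(r)\,dW_r$; since the integrands $\partial_{\lambda_j}g_\lambda$ again satisfy \eqref{lemma tecnico 1} by hypothesis, the inductive hypothesis gives $N^{j}$ an $(n-1)$-times continuously differentiable modification. Writing the difference quotient as a single stochastic integral,
\[
\frac{M_{\lambda+he_j}(t)-M_\lambda(t)}{h}-N^{j}_\lambda(t)
=\int_0^t\Big(\int_0^1\big(\partial_{\lambda_j}g_{\lambda+she_j}(r)-\partial_{\lambda_j}g_\lambda(r)\big)\,ds\Big)\,dW_r ,
\]
and invoking once more the Burkholder--Davis--Gundy inequality together with the $L^p$-Lipschitz bound in \eqref{lemma tecnico 1} for $\partial_{\lambda_j}g$, the right-hand side tends to $0$ in $L^p$ as $h\to 0$, with rate $|h|$. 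Feeding this estimate into the same Kolmogorov--Chentsov machinery (now viewing the difference-quotient process as a random field in $(h,\lambda)$) upgrades the convergence to almost sure, locally uniformly in $(t,\lambda)$, so that $\partial_{\lambda_j}M_\lambda=N^{j}_\lambda$; iterating over all coordinate directions and all multi-indices $|i|\le n$ gives the claimed identity $\partial^i_\lambda\int_0^t g_\lambda(r)\,dW_r=\int_0^t\partial^i_\lambda g_\lambda(r)\,dW_r$ and the joint $C^n$-regularity.

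I expect the main obstacle to be precisely this last step: passing from $L^p$-convergence of the difference quotients to genuine almost sure differentiability with a continuous derivative field, one cannot simply interchange the limit $h\to 0$ with the ``for every $\lambda$, almost surely'' statement, and a uniform Kolmogorov- (or Sobolev-embedding-) type control of the difference quotients as a random field in the parameters is required. It is exactly there that both the assumed $C^n$-smoothness of $g_\lambda$ and the integrability condition on $p$ relative to $e$ are essential; the remaining estimates are routine applications of the Burkholder--Davis--Gundy and Hölder inequalities.
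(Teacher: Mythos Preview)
Your sketch is correct and follows exactly the classical Burkholder--Davis--Gundy plus Kolmogorov--Chentsov argument that Kunita uses; the paper, however, does not give its own proof of this lemma at all, merely stating the result and referring the reader to \cite{kunita} for details. In that sense you have supplied more than the paper does, and your outline matches the standard source the paper is citing.
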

\noindent
The next result pertains to the non-explosivenessess of a solution to an SDE. Since it is based on the theory of Lyapunov functions, it is useful to recall the following. \\
\textbf{Lyapunov condition:} Given  an infinitesimal generator $L$ of the form \eqref{L}, there exists $\phi \in C^{1,2}(\mathbb{R}^n \times [0,T])$, called Lyapunov function, such that $\phi \geq 0$
and
\begin{equation*}
    \lim_{r \rightarrow \infty}(\inf_{0\leq t \leq T, \  |x| \leq r} \phi(x,t)) = \infty \ \ \ \ \ \ \ \  (\partial_t + L) \phi(x,t) \leq M \phi(x,t)
\end{equation*}
where the second condition is given for some constant $ M$, a.s. on $\mathbb{R}^n \times [0,T]$.
\begin{teorema}[Lyapunov]\label{Lyapunov}
    Let $\ a_{ij}:=(\sigma \sigma^T)^{ij} \in W^{1,p}_{loc}(\mathbb{R}^n) \cup C(\mathbb{R}^n)$ 
    and $\mu \in L^p_{loc}(\mathbb{R}^n), p \in (n,\infty).$ A sufficient condition for the martingale problem associated with the infinitesimal generator $ L$ to be well-posed is that, for every $ T > 0$, there exists a number $ M = M_t > 0$ and a non-negative function $\phi \in C^{1,2}(\mathbb{R}^n \times [0,T]) $ such that $\phi$ is a Lyapunov function on $\mathbb{R}^n \times [0,T]$. In this case, the solution process is non-explosive and, for every $ (x,t) \in \mathbb{R}^n \times [0,T]$, the following holds
    \begin{equation*}
        \mathbb{E}_x[\phi(X_t,t)] \leq \exp(Mt) \phi(X_0,0).
    \end{equation*}
\end{teorema}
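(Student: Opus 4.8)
The plan is to reduce the statement to a standard result on well-posedness of martingale problems under a Lyapunov (Khasminskii-type) condition, and then to extract the quantitative estimate via a localization-and-Gronwall argument. First I would recall that, under the stated regularity assumptions $a_{ij}\in W^{1,p}_{loc}\cap C(\mathbb{R}^n)$ and $\mu\in L^p_{loc}(\mathbb{R}^n)$ with $p\in(n,\infty)$, the \emph{local} martingale problem for $L$ is well-posed up to an explosion time $\zeta$: existence and uniqueness of a solution $(P^x)$ on the space of paths with lifetime, killed at $\zeta=\lim_k\tau_k$ where $\tau_k=\inf\{t:\ |X_t|\ge k\}$. This is the classical Stroock--Varadhan / Krylov theory in the Sobolev-coefficient setting; I would cite it rather than reprove it. The remaining content of the theorem is then precisely that the Lyapunov function forces $\zeta=\infty$ a.s., i.e.\ non-explosiveness, together with the exponential bound on $\mathbb{E}_x[\phi(X_t,t)]$.

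The key steps, in order, are: (i) Fix $x$ and $T>0$, and work under the (local) solution measure $P^x$. Apply Itô's formula / Dynkin's formula to the process $e^{-Ms}\phi(X_s,s)$ on the stopped interval $[0,t\wedge\tau_k]$; since $(\partial_t+L)\phi\le M\phi$, the drift of $e^{-Ms}\phi(X_s,s)$ is $e^{-Ms}\big((\partial_t+L)\phi-M\phi\big)(X_s,s)\le 0$, so $e^{-M(s\wedge\tau_k)}\phi(X_{s\wedge\tau_k},s\wedge\tau_k)$ is a nonnegative supermartingale. (ii) Taking expectations gives $\mathbb{E}_x[e^{-M(t\wedge\tau_k)}\phi(X_{t\wedge\tau_k},t\wedge\tau_k)]\le\phi(x,0)$. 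On the event $\{\tau_k\le t\}$ we have $\phi(X_{\tau_k},\tau_k)\ge \inf_{0\le s\le T,\ |y|=k}\phi(y,s)=:c_k$, and $c_k\to\infty$ by the coercivity hypothesis on $\phi$; hence $e^{-MT}c_k\,P^x(\tau_k\le t)\le\phi(x,0)$, giving $P^x(\tau_k\le t)\le e^{MT}\phi(x,0)/c_k\to 0$. Since this holds for every $t\le T$ and every $T$, we get $P^x(\zeta<\infty)=0$, i.e.\ the solution is non-explosive, and (by the standard argument) global well-posedness of the martingale problem follows from the local one. (iii) With non-explosiveness in hand, $\tau_k\to\infty$ a.s., so $t\wedge\tau_k\to t$; the integrand $e^{-M(t\wedge\tau_k)}\phi(X_{t\wedge\tau_k},t\wedge\tau_k)$ is nonnegative, so by Fatou's lemma $\mathbb{E}_x[e^{-Mt}\phi(X_t,t)]\le\liminf_k\mathbb{E}_x[e^{-M(t\wedge\tau_k)}\phi(X_{t\wedge\tau_k},t\wedge\tau_k)]\le\phi(x,0)$, which rearranges to $\mathbb{E}_x[\phi(X_t,t)]\le e^{Mt}\phi(X_0,0)$, the claimed estimate.

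The main obstacle is step (i): justifying Itô's formula for $\phi(X_s,s)$ when the coefficients of $L$ are only in $W^{1,p}_{loc}$ (so classical $C^2$ Itô does not immediately apply to establish that $\phi(X_s,s)-\phi(x,0)-\int_0^s(\partial_t+L)\phi\,dr$ is a local martingale). Here one uses precisely the martingale-problem formulation: by definition a solution to the martingale problem for $L$ makes $f(X_t,t)-f(x,0)-\int_0^t(\partial_s+L)f(X_s,s)\,ds$ a local martingale for $f\in C^{1,2}$, and $\phi$ is assumed $C^{1,2}$, so this is available \emph{by hypothesis} on the stopped intervals (localizing to keep everything bounded). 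Combining with the differential inequality and the optional stopping theorem for supermartingales then closes the argument; everything else is routine. I would therefore present the proof compactly, citing the Sobolev-coefficient well-posedness for the local problem and carrying out (i)--(iii) in a few lines.
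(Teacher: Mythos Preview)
Your plan is correct and is the standard Khasminskii-type argument for Lyapunov criteria: localize, use the supermartingale property of $e^{-Ms}\phi(X_{s\wedge\tau_k},s\wedge\tau_k)$ from the differential inequality, extract non-explosion from coercivity, and pass to the limit with Fatou. The handling of the potential subtlety in step~(i), namely that $\phi\in C^{1,2}$ so the local-martingale property is given directly by the martingale-problem formulation rather than needing an It\^o formula under weak coefficient regularity, is exactly right.

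There is nothing to compare against: the paper does not give its own proof of this theorem but simply refers the reader to \cite{paper2023}. Your write-up therefore supplies considerably more than what appears in the paper, and what you outline is precisely the argument one would expect to find behind that citation. One minor point: the coercivity hypothesis in the paper is stated as $\lim_{r\to\infty}\inf_{0\le t\le T,\ |x|\le r}\phi(x,t)=\infty$, which is evidently a typo for $|x|\ge r$ (or $|x|=r$); you have silently corrected this in your step~(ii), which is the only sensible reading.
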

\begin{proof} See, e.g., \cite{paper2023}.
\end{proof}
The following analytical result is preparatory to the integration by parts theorem, and it focus on verifying in a particular case the assumptions of Lemma \ref{derivata sotto valore atteso}, namely to establish under which conditions it is possible to bring the differentiation inside an expected value. \\

The regularity assumptions assumed in the present paper are the following.
\\
\\
{Hypotesis A'} \textit{Forall} $ X_t$ \textit{solution to} $ SDE_{\mu, \sigma}$, \textit{with initial deterministic conditions, 
let us assume the following regularity hypothesis:}
\begin{equation*}
   C_{\alpha, k} H_{k}(X_t,t),  H_{\alpha}(X_t,t), Y(H_{\alpha}) (X_t,t), L(Y^i)(X_t,t), \Sigma_{\alpha}(Y^i)(X_t,t),
   \end{equation*}
   \begin{equation}\label{ipotesi A'} 
   L(Y(Y^i))(X_t,t), \Sigma_{\alpha}(Y(Y^i))(X_t,t) \in L^2(\Omega).
\end{equation}
\begin{osservazione}
    Notice that, compared to the regularity assumptions used in \cite{paper2023}, Hypothesis A' \eqref{ipotesi A'} adds a regularity condition on the term $ C_{\alpha k}H_k$. If there are no rotations (i.e. if we are in the setting already studied in \cite{paper2023}), then $ B_{\lambda}=I_m$ and $ C$ is the null matrix: the term $ CH $ vanishes and \textit{Hypothesis A'} reduces to the \textit{Hypothesis A} assumed in \cite{paper2023}.
\end{osservazione}

\begin{lemma}\label{lemma tecnico mb}
   Consider an $ SDE_{\mu,\sigma}$ with infinitesimal symmetry $ V=(Y, C, \tau, H)$ satisfying Hypothesis A' (\ref{ipotesi A'}). Let $ T_{\lambda}=(\Phi_{\lambda}, B_{\lambda}, \eta_{\lambda}, h_{\lambda})$ be the one-parameter group associated with $ V$. Then, given
    \begin{equation*}
        P(\lambda)= \frac{d \mathbb{Q}_{\lambda}}{d \mathbb{P}} \cdot \Bigg( \int_0^{f_{- \lambda}(t)} L(F) \circ \Phi_{\lambda}(X_s,s) f'_{\lambda}(s) ds \Bigg),
    \end{equation*}
   we have that 
    \begin{equation*}
        \mathbb{E}_{\mathbb{P}}[|\partial^2_{\lambda} P(\lambda) | ] < \infty.
    \end{equation*}
\end{lemma}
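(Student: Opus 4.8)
The plan is to differentiate $P(\lambda)$ twice in $\lambda$ and bound the $L^1(\mathbb{P})$-norm of the result by splitting it via the product rule into finitely many terms, each of which is a product of three factors: (i) a power of the Radon–Nikodym density $\tfrac{d\mathbb{Q}_\lambda}{d\mathbb{P}}$ (or one of its $\lambda$-derivatives, which by the explicit Girsanov form in Theorem~\ref{trasformate} is the density times a polynomial in stochastic integrals of $h_\lambda$ and its derivatives); (ii) the time-change factor $f'_\lambda$ and its $\lambda$-derivatives, which are smooth bounded functions of $t$ by the reconstruction equations \eqref{ricostruzione del flusso} together with the non-explosiveness of $\tau$; and (iii) the iterated-integral factor $\int_0^{f_{-\lambda}(t)} L(F)\circ\Phi_\lambda(X_s,s)\,f'_\lambda(s)\,ds$ and its $\lambda$-derivatives. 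The key observation is that differentiating the inner integrand in $\lambda$ produces, via the chain rule and Theorem~\ref{teo ricostruzione del flusso}, exactly the quantities appearing in Hypothesis~A$'$ \eqref{ipotesi A'}: one $\lambda$-derivative of $L(F)\circ\Phi_\lambda$ brings down $Y$ acting on the argument and the infinitesimal generators/vector fields $L(Y^i)$, $\Sigma_\alpha(Y^i)$ (the latter being the diffusion-coefficient contributions coming from the Itô correction when one differentiates along the flow), while the second derivative brings down the second-order terms $L(Y(Y^i))$, $\Sigma_\alpha(Y(Y^i))$; the new term $C_{\alpha k}H_k$ enters through differentiating the $B_\lambda$-dependence inside $h_\lambda$ according to the fifth equation in \eqref{ricostruzione del flusso}.

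First I would write $P(\lambda) = D(\lambda)\cdot I(\lambda)$ with $D(\lambda) = \tfrac{d\mathbb{Q}_\lambda}{d\mathbb{P}}$ and $I(\lambda)$ the bracketed iterated integral, compute $\partial_\lambda^2 P = D''I + 2D'I' + DI''$, and treat the three summands separately. For the density factor, I would use that $D'$ and $D''$ are $D$ times (a sum of) terms of the form $\int_0^t(\partial_\lambda^j h_\lambda)_\alpha\,dW^\alpha$ and $\int_0^t(\partial_\lambda^j h_\lambda)_\alpha^2\,ds$; since $D$ is a true martingale under the non-explosiveness hypothesis (Theorem~\ref{Lyapunov}), a Cauchy–Schwarz/Hölder argument passing to $\mathbb{Q}_\lambda$ together with boundedness of $F$ and $\eta$ and the $L^2(\Omega)$ membership in Hypothesis~A$'$ controls these contributions; here one also invokes the Kunita lemma (Lemma~\ref{Kunita}) to justify that $\partial_\lambda$ commutes with the stochastic integrals defining $h_\lambda$ and $D$. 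For $I'$ and $I''$ I would differentiate under the $ds$-integral — legitimate because the integrand is smooth in $\lambda$ and the integration is over a bounded deterministic interval $[0,f_{-\lambda}(t)]$ with smooth endpoint — picking up a boundary term from $\partial_\lambda f_{-\lambda}(t)$ (bounded) plus interior terms in which the chain rule produces precisely $Y(L(F))$, $L(Y^i)\partial_i F$, $\Sigma_\alpha(Y^i)$, and at second order $L(Y(Y^i))$, $\Sigma_\alpha(Y(Y^i))$, evaluated along $X$; boundedness of $F$ and its first derivative together with \eqref{ipotesi A'} then gives $I',I''\in L^2(\Omega)$ uniformly on the relevant $\lambda$-interval.

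Assembling the pieces, each of $D''I$, $D'I'$, $DI''$ is estimated in $L^1(\mathbb{P})$ by Cauchy–Schwarz as a product of an $L^2(\mathbb{P})$-bound on a density-type factor (finite by Girsanov and the non-explosiveness assumption, after possibly using higher integrability of $D$ near $\lambda=0$ as in \cite{paper2023}) and an $L^2(\mathbb{P})$-bound on an $I$-type factor (finite by Hypothesis~A$'$), yielding $\mathbb{E}_\mathbb{P}[|\partial_\lambda^2 P(\lambda)|]<\infty$. The main obstacle I anticipate is the bookkeeping for the density factor: one must show that $D'$ and $D''$ lie in $L^2(\mathbb{P})$ (not merely $L^1$), which requires the stochastic exponential $D$ to have moments slightly above those dictated by a bare Girsanov argument — this is where a local uniformity in $\lambda$ and the Lyapunov/non-explosiveness estimate of Theorem~\ref{Lyapunov} must be combined carefully, essentially reproducing the delicate step of \cite{paper2023} but now with the extra $C_{\alpha k}H_k$ term present in $\partial_\lambda h_\lambda$. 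Once that uniform integrability of the density derivatives is in place, the rest is a routine Cauchy–Schwarz assembly.
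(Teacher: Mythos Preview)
Your decomposition into $D''I+2D'I'+DI''$ and your identification of the new term $C_{\alpha k}H_k$ arising from $\partial_\lambda h_\lambda$ are correct, but your final assembly has a genuine gap. You propose to estimate each piece in $L^1(\mathbb{P})$ by Cauchy--Schwarz as a product of an $L^2(\mathbb{P})$-bound on a density-type factor and an $L^2(\mathbb{P})$-bound on an $I$-type factor, and you rightly flag that this forces $D,D',D''\in L^2(\mathbb{P})$, i.e., moments of the stochastic exponential strictly above~$1$. These are \emph{not} available under the stated hypotheses: non-explosiveness (Theorem~\ref{Lyapunov}) gives only that $D=\tfrac{d\mathbb{Q}_\lambda}{d\mathbb{P}}$ is a true martingale, hence in $L^1$, not in $L^p$ for any $p>1$. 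Your suggestion that a Lyapunov argument fills this in does not work as stated, and the paper you cite does not proceed that way either.

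The paper avoids this obstacle entirely, and this is the mechanism you are missing. Every summand of $\partial_\lambda^2 P(\lambda)$ still carries the factor $D$ (since $D'$ and $D''$ equal $D$ times polynomials in stochastic integrals of $\partial_\lambda^j h_\lambda$), so each term is of the form $D\cdot G$ and $\mathbb{E}_\mathbb{P}[|D\cdot G|]=\mathbb{E}_{\mathbb{Q}_\lambda}[|G|]$. The crucial step is then to apply the time-change formula of Lemma~\ref{Oksendal}, using the orthogonality $B_\lambda^{-1}B_\lambda^{-T}=I_m$ to collapse the matrix factors, and to recognise $G$ as a functional of $(P_{T_\lambda}(X),P_{T_\lambda}(W))$. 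One now invokes the \emph{symmetry} of $T_\lambda$: the law of $(P_{T_\lambda}(X),P_{T_\lambda}(W))$ under $\mathbb{Q}_\lambda$ coincides with that of $(X,W)$ under $\mathbb{P}$. This converts $\mathbb{E}_{\mathbb{Q}_\lambda}[|G|]$ into a $\mathbb{P}$-expectation in the \emph{untransformed} process with no density factor whatsoever, after which H\"older and It\^o isometry with the $L^2$ assumptions of Hypothesis~A$'$ suffice. You mention ``passing to $\mathbb{Q}_\lambda$'' in one line but never use the symmetry to return to $\mathbb{P}$, reverting instead to direct $L^2(\mathbb{P})$-bounds on $D$; that return trip via the symmetry is precisely the missing idea. (A minor separate point: $\partial_\lambda[L(F)\circ\Phi_\lambda]=Y(L(F))\circ\Phi_\lambda$ is a deterministic chain rule along the flow $\partial_\lambda\Phi_\lambda=Y\circ\Phi_\lambda$; there is no It\^o correction producing $\Sigma_\alpha(Y^i)$ at that step.)
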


\begin{proof}
 Simply applying the product rule for differentiation and using Theorem \ref{teo ricostruzione del flusso} for the derivative of $\Phi_{\lambda}$ we have that
\small \begin{equation*}
     \partial^2_{\lambda} P(\lambda) = \frac{d \mathbb{Q}_{\lambda}}{d \mathbb{P}} \cdot \Big( \int_0^{f_{-\lambda}(t)} L(F) \circ \Phi_{\lambda}(X_s,s) f'_{\lambda}(s) ds \Big)\Big( \int_0^T \partial_{\lambda} h_{\alpha, \lambda}(X_t,t) dW^{\alpha}_t - \int_0^T h_{\alpha, \lambda}(X_t,t) \partial_{\lambda} h_{\alpha, \lambda}(X_t,t) dt\Big)^2 +
        \end{equation*}
        \begin{equation*}
             + 2 \frac{d \mathbb{Q}_{\lambda}}{d \mathbb{P}} (\partial_{\lambda}f_{-\lambda}(t)) L(F) \circ \Phi_{\lambda}(X_{f_{-\lambda}(t)}, f_{-\lambda}(t)) f'_{\lambda}(f_{-\lambda}(t)) \Big( \int_0^T \partial_{\lambda} h_{\alpha, \lambda}(X_t,t) dW^{\alpha}_t
            - \int_0^T h_{\alpha, \lambda}(X_t,t) \partial_{\lambda} h_{\alpha, \lambda}(X_t,t) dt\Big) + 
        \end{equation*}
        \begin{equation*}
           + 2 \frac{d \mathbb{Q}_{\lambda}}{d \mathbb{P}} \Big( \int_o^{f_{-\lambda}(t)} Y(L(F)) \circ \Phi_{\lambda}(X_s,s) f'_{\lambda}(s) ds \Big) \cdot  \Big( \int_0^T \partial_{\lambda} h_{\alpha, \lambda}(X_t,t) dW^{\alpha}_t - \int_0^T h_{\alpha, \lambda}(X_t,t) \partial_{\lambda} h_{\alpha, \lambda}(X_t,t) dt\Big)+ 
        \end{equation*}
        \begin{equation*}
            +2   \frac{d \mathbb{Q}_{\lambda}}{d \mathbb{P}} \Big( \int_0^{f_{-\lambda}(t)} L(F) \circ \Phi_{\lambda}(X_s,s) \partial_{\lambda} f'_{\lambda}(s) ds \Big) \cdot \Big( \int_0^T \partial_{\lambda} h_{\alpha, \lambda}(X_t,t) dW^{\alpha}_t - \int_0^T h_{\alpha, \lambda}(X_t,t) \partial_{\lambda} h_{\alpha, \lambda}(X_t,t) dt\Big) + 
        \end{equation*}
        \begin{equation*}
           +  \frac{d \mathbb{Q}_{\lambda}}{d \mathbb{P}} \Big( \int_0^{f_{-\lambda}(t)} L(F) \circ \Phi_{\lambda}(X_s,s) f'_{\lambda}(s) ds \Big) \cdot  \Big( \int_0^T \partial^2_{\lambda} h_{\alpha, \lambda}(X_t,t) dW^{\alpha}_t - \int_0^T (\partial_{\lambda} h_{\alpha, \lambda}(X_t,t))^2 dt +
        \end{equation*}
        \begin{equation*}
            - \int_0^T h_{\alpha,\lambda}(X_t,t) \partial^2_{\lambda}h_{\alpha, \lambda}(X_t,t) dt\Big) 
            + 2\frac{d \mathbb{Q}_{\lambda}}{d \mathbb{P}}(\partial_{\lambda} f_{-\lambda}(t)) Y(L(F)) \circ \Phi_{\lambda}(X_{f_{-\lambda}(t)}, f_{-\lambda}(t)) f'_{\lambda}(f_{-\lambda}(t)) + 
        \end{equation*}
        \begin{equation*}
          +  \frac{d \mathbb{Q}_{\lambda}}{d \mathbb{P}}   L(F) \circ \Phi_{\lambda}(X_{f_{-\lambda}(t)}, f_{-\lambda}(t))\partial_{\lambda}(\partial_{\lambda} f_{-\lambda}(t) f'_{\lambda}(f_{-\lambda}(t)) +  \frac{d \mathbb{Q}_{\lambda}}{d \mathbb{P}} \Big(\int_0^{f_{-\lambda}(t)} Y(Y(L(F))) \circ \Phi_{\lambda}(X_s,s) f'_{\lambda}(s) ds \Big) +
        \end{equation*}
        \begin{equation*}
            + 2 \frac{d \mathbb{Q}_{\lambda}}{d \mathbb{P}} \Big(\int_0^{f_{-\lambda}(t)} Y(L(F)) \circ \Phi_{\lambda}(X_s,s) \partial_{\lambda}f'_{\lambda}(s) ds\Big) + \frac{d \mathbb{Q}_{\lambda}}{d \mathbb{P}} \Big( (\partial_{\lambda} f_{-\lambda}(t)) L(F) \circ \Phi_{\lambda}(X_{f_{-\lambda}(t)}, f_{-\lambda}(t)) \partial_{\lambda} f'_{\lambda}(f_{-\lambda}(t) \Big) +
        \end{equation*}
        \begin{equation*}
            + \frac{d \mathbb{Q}_{\lambda}}{d \mathbb{P}} \cdot \Big(  \int_0^{f_{-\lambda}(t)} L(F) \circ \Phi_{\lambda}(X_s,s) \partial^2_{\lambda} f'_{\lambda}(s) ds \Big).
        \ 
\end{equation*}
   \normalsize
\noindent      
    To prove the statement, it suffices to show that each term in the sum lies in \( L^1(\mathbb{P}) \). Only the terms involving the second partial derivative of \( h_{\alpha,\lambda} \)—specifically the fifth—are affected by the Brownian motion rotation, unlike in \cite{paper2023}. We illustrate this by proving the \( L^1 \) membership of a term without such a derivative (requiring no extra assumptions), and then of the term with it, which demands a mild strengthening of the regularity conditions in \cite{paper2023}, forming hypothesis \( A' \).
\\ \\
\noindent
Let us denote with $ (P_{T_{\lambda}}(X),P_{T_{\lambda}}(W))$ the transformed process of $(X,W)$. 
\noindent     We start with the first term: using the Radon-Nikodym theorem, factoring out the common terms, explicitly stating the partial derivative of $ h_{\lambda}$ according to Theorem \ref{teo ricostruzione del flusso}, and applying Definition \ref{trasformate}, we have that
       \begin{equation*}
           \mathbb{E}_{\mathbb{P}}\Bigg[ \frac{d \mathbb{Q}_{\lambda}}{d \mathbb{P}} \cdot \Big( \int_0^{f_{-\lambda}(t)} L(F) \circ \Phi_{\lambda}(X_s,s) f'_{\lambda}(s) ds \Big)\Big( \int_0^T \partial_{\lambda} h_{\alpha, \lambda}(X_t,t) dW^{\alpha}_t - \int_0^T h_{\alpha, \lambda}(X_t,t) \partial_{\lambda} h_{\alpha, \lambda}(X_t,t) dt\Big)^2 \Bigg] =
       \end{equation*}
 \begin{equation*}
     =   \mathbb{E}_{\mathbb{Q}}\Bigg[  \Big( \int_0^{f_{-\lambda}(t)} L(F) \circ \Phi_{\lambda}(X_s,s) f'_{\lambda}(s) ds \Big) \Big( \int_0^T \sqrt{\eta_{\lambda}} B_{\lambda_{\alpha,k}}^{-1} H_{k} \circ \Phi_{\lambda} (X_t,t) \cdot(dW^{\alpha}_t - h_{\alpha, \lambda}(X_t,t) dt ) \Big)^2 \Bigg] .
 \end{equation*}  
 We apply to the first factor the time variable change $\ u=f_{\lambda}(s)$ and we obtain
\begin{equation*}
    \int_0^{f_{-\lambda}(t)} L(F) \circ \Phi_{\lambda}(X_s,s) f'_{\lambda}(s) ds =   \int_0^t L(F) \circ \Phi_{\lambda}(\mathcal{H}_{\eta_{\lambda}}(X)_u du = \int_0^t L(F) (P_{T_{\lambda}}(X))_u  d u.
 \end{equation*}
\noindent
 Similarly, we apply to the second integral the stochastic time change  $ u=f_{\lambda}(t)$. Recalling Lemma \ref{Oksendal} and recalling that $ B \in SO(m)$) and 
 \begin{equation*} 
 dP_{T_{\lambda}}(W_t)=\sqrt{\eta_{\lambda}(f_{-\lambda}(t))}B_{\lambda}(X_{f_{-\lambda}(t)},f_{-\lambda}(t))(dW_{f_{-\lambda}(t)}-h_{\lambda}(X_{f_{-\lambda}(t)},f_{-\lambda}(t)))
 \end{equation*}
 we obtain 
\begin{equation*}
    \int_0^{f_{\lambda}(T)} B_{\lambda_{\alpha,k}}^{-1} H_{k} \circ \Phi_{\lambda} (X_{f_{-\lambda}(u)},f_{-\lambda}(u))  B_{\lambda_{\alpha,k}}^{-1} \cdot dP_{T_{\lambda}}(W^k_u) = 
     \int_0^{f_{\lambda}(T)}   H_{k}   (P_{T_{\lambda}}(X_u))\cdot dP_{T_{\lambda}}(W^k_u)
\end{equation*}

where we used that, since $ B_{\lambda} \in SO(m)$, $ B_{\lambda_{\alpha,i}}^{-1} B_{\lambda_{\alpha,k}}^{-1}= B_{\lambda_{\alpha,i}}^T B_{\lambda_{\alpha,k}}^{-1}= B_{\lambda_{i,\alpha}}B_{\lambda_{\alpha,k}}^{-1}=(B_{\lambda}\cdot B_{\lambda}^{-1})_{i,k}=(I_m)_{ik}=\delta_{i,k}$, where $\delta$ is the Kronecker's delta.
Since $V$ is an infinitesimal symmetry, $X_t$ under $\mathbb{P}$ has the same law as $ (P_{T_{\lambda}}(X_t))$ under $\mathbb{Q}_{\lambda}$, so
\begin{equation*}
     \mathbb{E}_{\mathbb{Q}_{\lambda}}\Bigg[ \Big(\int_0^t L(F (P_{T_{\lambda}}(X_u))) du \Big) \Big( \int_0^{f_{\lambda}(T)}  H_{k}   (P_{T_{\lambda}}(X_u))\cdot dP_{T_{\lambda}}(W^k_u) \Big)^2 \Bigg] =
  \end{equation*}   
   \begin{equation*}  
 =    \mathbb{E}_{\mathbb{P}}\Bigg[ \Big(\int_0^t L(F (X_u))  d u \Big) \Big( \int_0^{T}  H_{k}   (X_u,u)\cdot d W^k_u \Big)^2 \Bigg]. 
\end{equation*}
Applying Holder inequality and It\^o isometry
\begin{equation*}
 \mathbb{E}_{\mathbb{P}}\Bigg[ \Big| \Big(\int_0^t L(F (X_u))  d u \Big) \Big( \int_0^{T} H_{k}   (X_u,u)\cdot d W^k_u \Big)^2 \Big| \Bigg]
  \end{equation*}
\begin{equation*}
  \leq  \Bigg( \mathbb{E}_{\mathbb{P}}\Bigg[  \Bigg(\int_0^t L(F (X_u))  d u \Bigg)^2 \Bigg] \Bigg)^{\frac{1}{2}} \cdot \Bigg( \mathbb{E}_{\mathbb{P}}\Bigg[ \Big( \int_0^{T}   H_{k}   (X_u,u)\cdot d W^k_u \Big)^4\Bigg] \Bigg)^{\frac{1}{2}} 
\end{equation*}
\begin{equation*}
  \leq ||F||_{C^2} \cdot \Bigg( \mathbb{E}_{\mathbb{P}} \Bigg[ \int_0^t |\mu(X_u,u)|^2 du \Bigg] + \mathbb{E}_{\mathbb{P}}\Bigg[ \int_0^t |\sigma(X_u,u)|^2 du \Bigg]\Bigg)^{\frac{1}{2}} \cdot 
\Bigg( \mathbb{E}_{\mathbb{P}}\Bigg[ \Big( \int_0^T \Big|  H_{k}   (X_u,u) \Big|^2 d u \Big)^2\Bigg] .\Bigg)^{\frac{1}{2}} 
\end{equation*}
\noindent
Since $\mu, \sigma,  H_k \in L^2 $ for hypothesis $\ A'$, we just proved that $\partial^2_{\lambda}P(\lambda) \in L^1.$\\
\\
Consider now the fifth addend of $\partial_{\lambda}^2P(\lambda).$ 
Preliminarily, deriving the identity $ B_{\lambda} \cdot B_{\lambda}^{-1} = I_n$ w.r.t. $\lambda$ and applying  Theorem \ref{teo ricostruzione del flusso}, we observe that $   \partial_{\lambda}B_{\lambda}^{-1} = - B_{\lambda}^{-1}  C\circ \Phi_{\lambda}$.
Furthermore, 

\begin{equation*}
    \partial^2_{\lambda} h_{\alpha, \lambda} = \partial_{\lambda} \big( \sqrt{\eta}_{\lambda} B_{\lambda_{\alpha,k} }^{-1}H_{k} \circ \Phi_{\lambda}\big) =  \frac{\tau \circ f_{\lambda}}{2}\sqrt{\eta}_{\lambda} B_{\lambda_{\alpha,k} }^{-1} H_{k} \circ \Phi_{\lambda} - \sqrt{\eta}_{\lambda} (B_{\lambda}^{-1} C )_{\alpha,k} H_{k}\circ \Phi_{\lambda} + \sqrt{\eta_{\lambda}} B_{\lambda_{\alpha,k}}^{-1}Y(H_{k})\circ \Phi_{\lambda}.
\end{equation*}
Let us begin with the second factor of the fifth addend. By appropriately collecting the terms, expanding the derivatives, and applying the time change $ u=f_{\lambda}(t)$, we obtain
 \begin{equation*}
    \int_0^T \partial^2_{\lambda} h_{\alpha,\lambda}(X_t,t) \big( dW^{\alpha}_t-h_{\alpha,\lambda}(X_t,t)dt \big) - \int_0^T (\partial_{\lambda} h_{\alpha, \lambda}(X_t,t))^2 dt=
\end{equation*}
\begin{equation*}
     = \int_0^T \big( \frac{\tau(f_{\lambda}(t))}{2}B^{-1}_{\lambda_{\alpha,k}}H_k\circ \Phi_{\lambda}(X_t,t)-(B_{\lambda}^{-1} C)_{\alpha,k} H_k \circ \Phi_{\lambda}(X_t,t) + 
\end{equation*}
\begin{equation*}
   +B^{-1}_{\lambda_{\alpha,k}}Y(H_k)\circ \Phi_{\lambda}(X_t,t)\big)\sqrt{\eta_{\lambda}(t)} \cdot \Big( (B^{-1}_{\lambda} \cdot B_{\lambda})_{\alpha,j}\big( dW^{j}_t-h_{j,\lambda}(X_t,t)dt \big)\Big)  - \int_0^T \eta_{\lambda}(t) \big( B^{-1}_{\lambda_{\alpha,k}}H_k\circ\Phi_{\lambda}(X_t,t) \big)^2 dt=
\end{equation*}
\begin{equation*}
     = \int_0^T \big( \frac{\tau(f_{\lambda}(t))}{2}(B_{\lambda} B^{-1}_{\lambda})_{k,l} H_k\circ \Phi_{\lambda}(X_t,t)- (B_{\lambda}  B^{-1}_{\lambda})_{i,l} C_{i,k} H_k \circ \Phi_{\lambda}(X_t,t) + 
\end{equation*}
\begin{equation*}
   +(B_{\lambda} B^{-1}_{\lambda})_{k,l} Y(H_k)\circ \Phi_{\lambda}(X_t,t)\big)\sqrt{\eta_{\lambda}(t)}  B_{\lambda_{l,j}}\big( dW^{j}_t-h_{j,\lambda}(X_t,t)dt \big)  - \int_0^T \eta_{\lambda}(t) \big( B^{-1}_{\lambda_{\alpha,k}}H_k\circ\Phi_{\lambda}(X_t,t) \big)^2 dt=
\end{equation*}
\begin{equation*}
     = \int_0^T \big( \frac{\tau(f_{\lambda}(t))}{2}H_l\circ \Phi_{\lambda}(X_t,t)-  C_{l,k} H_k \circ \Phi_{\lambda}(X_t,t) + 
\end{equation*}
\begin{equation*}
   + Y(H_l)\circ \Phi_{\lambda}(X_t,t)\big)\sqrt{\eta_{\lambda}(t)}  B_{\lambda_{l,j}}\big( dW^{j}_t-h_{j,\lambda}(X_t,t)dt \big)  - \int_0^T \eta_{\lambda}(t) \big( B^{-1}_{\lambda_{\alpha,k}}H_k\circ\Phi_{\lambda}(X_t,t) \big)^2 dt,
\end{equation*}
where, once again, we used that $ B_{\lambda_{\alpha,i}}^{-1} B_{\lambda_{\alpha,k}}^{-1}=\delta_{i,k}$.
Applying the time change $\  t=f_{-\lambda}(s)$ the expectation under $\mathbb{P}$ of the fifth addend becomes
\small \begin{equation*}
    \mathbb{E}_{\mathbb{P}}\Bigg[\frac{d \mathbb{Q}_{\lambda}}{d \mathbb{P}} \Big( \int_0^{f_{-\lambda}(t)} L(F) \circ \Phi_{\lambda}(X_s,s) f'_{\lambda}(s) ds \Big) \cdot  \Bigg( \int_0^T \partial^2_{\lambda} h_{\alpha, \lambda}(X_t,t) (dW^{\alpha}_t-h_{\alpha,\lambda}(X_t,t)dt)  - \int_0^T (\partial_{\lambda} h_{\alpha, \lambda}(X_t,t))^2 dt\Bigg)\Bigg]
\end{equation*}
\begin{equation*}
=\mathbb{E}_{\mathbb{Q_{\lambda}}}\Bigg[ \Big( \int_0^t L(F) (P_{T_{\lambda}}(X_s)) ds \Big) \cdot \Bigg( \int_0^{f_{\lambda}(T)} \frac{\tau}{2} H_l(P_{T_{\lambda}}(X_t),t) -C_{l,k}H_k(P_{T_{\lambda}}(X_t),t)+
\end{equation*}
\begin{equation*}
  + Y(H_l) (P_{T_{\lambda}}(X_t),t)dP_{T_{\lambda}}(W_t)^l  - \int_0^{f_{\lambda}(T)} ( B^{-1}_{\lambda_{\alpha,k}})^2 H^2_k(P_{T_{\lambda}}(X_t),t) ds\Bigg)\Bigg]=
\end{equation*}
\begin{equation*}
=\mathbb{E}_{\mathbb{P}}\Bigg[ \Big( \int_0^t L(F (X_s)) ds \Big) \cdot \Bigg( \int_0^{f_{\lambda}(T)} \frac{\tau}{2} H_l(X_s,s) -C_{l,k}H_k(X_s,s)+  Y(H_l) (X_s,s)dW_s^l  - \int_0^{f_{\lambda}(T)} H^2_k(X_s,s) ds \Bigg)\Bigg]
\end{equation*}\normalsize
where we used again the fact that $X_t,W_t $ under $\mathbb{P}$ has the same law of $ P_{T_{\lambda}}(X_t,W_t)$ under $\mathbb{Q}_{\lambda}$ and that $ B_{\lambda}B_{\lambda}^T=I_m$. Consider the second term, which is the only one affected by the presence of rotation. By using Hölder inequality and Itô isometry
\footnotesize\begin{equation*}
  \mathbb{E}_{\mathbb{P}}\Bigg[ \Bigg|\int_0^t L(F (X_s)) ds  \Bigg| \cdot \Bigg|\int_0^T C_{l,k} H_{k}(X_t,t) \cdot dW_t\Bigg|\Bigg] 
    \leq \Bigg(\mathbb{E}_{\mathbb{P}}\Bigg[ \Bigg(\int_0^t L(F (X_s)) ds  \Bigg)^2 \Bigg] \Bigg)^{\frac{1}{2}} \Bigg(\mathbb{E}_{\mathbb{P}}\Bigg[ \Bigg(  \int_0^T   C_{l,k} H_{k}(X_t,t) \cdot dW_t\Bigg)^2 \Bigg] \Bigg)^{\frac{1}{2}} 
\end{equation*}
\begin{equation*}
  \leq  ||F||_{C^2} \Bigg(\mathbb{E}_{\mathbb{P}}\Bigg[ \int_0^t |\mu(X_s,s)|^2 ds + \int_0^t |\sigma(X_s,s)|^2ds \Bigg] \Bigg)^{\frac{1}{2}} \Bigg(\mathbb{E}_{\mathbb{P}}\Bigg[  \int_0^{f_{\lambda(T)}} \Big| C_{
    l,k} H_{k}(X_t,t)\Big|^2 dt  \Bigg] \Bigg)^{\frac{1}{2}} 
\end{equation*}\normalsize
and this term is finite if, as assumed in hypothesis A', $\mu, \sigma, C_{l,k}H_k \in L^2.$\\
 All other terms can be treated similarly.
\end{proof}

\section{A rotational invariant integration by parts formula}\label{Rotational invariante formula}
In this work, we extend the integration by parts formula derived in \cite{paper2023} by including also rotations of Brownian motion. Notably, this extension leaves the structure of the formula unchanged, while the analytical results in Section \ref{analytical considerations} highlight the refinements required for a rigorous formulation.
\begin{teorema}[integration by parts formula]\label{teo integrazione per parti 2}
Let $ (X,W)$ be a solution to  $ SDE_{\mu,\sigma}$ and let $ V= (Y, C, \tau, H)$ be an infinitesimal stochastic symmetry for  $SDE{\mu,\sigma}$. Let $T_{\lambda}=(\Phi_{\lambda}, B_{\lambda}, \eta_{\lambda}, h_{\lambda})$ be the one-paramer group related to $ V$, where, as usual $\Phi_{\lambda}=(\tilde{\Phi}_{\lambda},f_{\lambda})$ and $\ Y=(\tilde{Y},m)$. Assuming Hypothesis A' (\ref{ipotesi A'}), then the following integration by parts formula holds for every $ t \in [0,T]$, for all $ F$  bounded functional with bounded first and second derivative:
\begin{equation}\label{integrazione per parti 2}
    -m(t) \mathbb{E}_{\mathbb{P}}[L(F(X_t))] + \mathbb{E}_{\mathbb{P}}[F(X_t) \int_0^t (H_{\alpha}(X_s,s) d W_s^{\alpha})] + \mathbb{E}_{\mathbb{P}}[Y(F(X_t))] - \mathbb{E}_{\mathbb{P}}[Y(F(X_0))]=0.
\end{equation}
\end{teorema}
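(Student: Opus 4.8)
The plan is to derive \eqref{integrazione per parti 2} by differentiating, at the parameter value $\lambda=0$, a $\lambda$-independent identity produced by the quasi-invariance principle (Theorem~\ref{principio di quasi-invarianza}). Let $T_\lambda=(\Phi_\lambda,B_\lambda,\eta_\lambda,h_\lambda)$ be the one-parameter group generated by $V$ (Theorem~\ref{teo ricostruzione del flusso}) and set $X^{(\lambda)}:=P_{T_\lambda}(X)$, which under $\mathbb{Q}_\lambda$ has the same law as $X$ under $\mathbb{P}$ because $V$ is a symmetry. First I would show that, with $P(\lambda)$ the quantity of Lemma~\ref{lemma tecnico mb},
\begin{equation*}
\mathbb{E}_{\mathbb{P}}[P(\lambda)]=\mathbb{E}_{\mathbb{P}}[F(X_t)]-F(X_0)\qquad\text{for every }\lambda\text{ in a neighbourhood of }0.
\end{equation*}
Indeed, the deterministic time change $u=f_\lambda(s)$ rewrites the inner integral of $P(\lambda)$ as $\int_0^t L(F)(X^{(\lambda)}_u,u)\,du$, so that $\mathbb{E}_{\mathbb{P}}[P(\lambda)]=\mathbb{E}_{\mathbb{Q}_\lambda}\!\big[\int_0^t L(F)(X^{(\lambda)}_u,u)\,du\big]$ by the Radon--Nikodym theorem; the martingale-problem characterization of $X^{(\lambda)}$ (which, being the image of a symmetry, solves the same martingale problem and hence has the same generator $L$) turns this into $\mathbb{E}_{\mathbb{Q}_\lambda}[F(X^{(\lambda)}_t)-F(X^{(\lambda)}_0)]$, and law-preservation gives $\mathbb{E}_{\mathbb{P}}[F(X_t)]-F(X_0)$. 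Passing through $P(\lambda)$ instead of differentiating $F(X^{(\lambda)}_t)$ directly is essential: in $P(\lambda)$ the $\lambda$-dependence sits only in the smooth coefficients $\Phi_\lambda,\eta_\lambda,h_\lambda$ and in a Lebesgue integral, hence is genuinely differentiable, whereas $X^{(\lambda)}$ is not differentiable in its time variable.

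Since the right-hand side above is constant in $\lambda$, $\partial_\lambda\mathbb{E}_{\mathbb{P}}[P(\lambda)]\big|_{\lambda=0}=0$; by Lemma~\ref{derivata sotto valore atteso} this equals $\mathbb{E}_{\mathbb{P}}[\partial_\lambda P(\lambda)|_{\lambda=0}]$, the hypothesis $\mathbb{E}_{\mathbb{P}}[|\partial^2_\lambda P(\lambda)|]<\infty$ being precisely Lemma~\ref{lemma tecnico mb} under Hypothesis~A' (with Lemma~\ref{Kunita} justifying the differentiation of the stochastic integrals in $h_{\alpha,\lambda}$, and the non-explosiveness of Theorem~\ref{Lyapunov} underpinning the integrability). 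I would then evaluate $\partial_\lambda P(\lambda)|_{\lambda=0}$ by the product rule, reading off from Theorem~\ref{teo ricostruzione del flusso} that $\partial_\lambda\Phi_\lambda|_0=Y$, $\partial_\lambda f_\lambda|_0=m$ (so $\partial_\lambda f_{-\lambda}|_0=-m$), $\partial_\lambda\eta_\lambda|_0=\tau$, $\partial_\lambda h_\lambda|_0=H$, $\partial_\lambda B_\lambda|_0=C$, together with $\Phi_0=\mathrm{id}$, $f_0=\mathrm{id}$, $\eta_0=1$, $h_0=0$, $B_0=I_m$, $\frac{d\mathbb{Q}_0}{d\mathbb{P}}=1$. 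Three contributions arise: differentiating the density gives $\big(\int_0^t H_\alpha(X_s,s)\,dW^\alpha_s\big)\big(\int_0^t L(F)(X_s,s)\,ds\big)$ (the boundary term at $f_{-\lambda}(t)$ vanishes because $h_0=0$); differentiating the upper limit of the inner integral gives $-m(t)L(F)(X_t,t)$; differentiating the integrand gives $\int_0^t\big(Y(L(F))+\tau L(F)\big)(X_s,s)\,ds$. The matrix $B_\lambda$ reduces to the identity at $\lambda=0$, so it does not appear here; it enters only in the second-derivative estimate of Lemma~\ref{lemma tecnico mb}, through the genuinely new term $C_{\alpha k}H_k$, which is exactly why the regularity hypotheses had to be strengthened to Hypothesis~A'.

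It remains to reorganize $\mathbb{E}_{\mathbb{P}}[\partial_\lambda P(\lambda)|_{\lambda=0}]=0$ into \eqref{integrazione per parti 2}. Using the It\^o representation $\int_0^t L(F)(X_s,s)\,ds=F(X_t)-F(X_0)-\int_0^t\partial_iF(X_s)\,\sigma^i_\alpha(X_s,s)\,dW^\alpha_s$ and taking expectations, the first contribution becomes $\mathbb{E}_{\mathbb{P}}\!\big[F(X_t)\int_0^t H_\alpha\,dW^\alpha\big]-\mathbb{E}_{\mathbb{P}}\!\big[\int_0^t H_\alpha\sigma^i_\alpha\partial_iF\,ds\big]$ (the $F(X_0)$-piece vanishes since $X_0$ is deterministic, the cross term by It\^o isometry). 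The remaining terms are then handled via the operator identity
\begin{equation*}
Y(L(F))+\tau L(F)-H_\alpha\sigma^i_\alpha\partial_iF=L_s\big(Y(F)\big),
\end{equation*}
which follows from the determining equations \eqref{equazione determinante 1.1} (the drift equation governs the first-order part, $[Y,\sigma\sigma^T]+\tau\sigma\sigma^T=0$ the second-order part), combined with one further application of the martingale problem, now to the function $(x,s)\mapsto Y(F)(x,s)$: this gives $\mathbb{E}_{\mathbb{P}}\!\big[\int_0^t L_s(Y(F))(X_s,s)\,ds\big]=\mathbb{E}_{\mathbb{P}}[Y(F(X_t))]-Y(F(X_0))$. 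Collecting the three contributions produces exactly \eqref{integrazione per parti 2}.

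The main obstacle is the interchange of differentiation and expectation in the second step; this is what Lemma~\ref{lemma tecnico mb} is built to supply, and it is precisely here that the random rotation of the Brownian motion makes its appearance, via the new term $C_{\alpha k}H_k$ in $\partial^2_\lambda P$, which is absent from the setting of \cite{paper2023} and is responsible for the refined Hypothesis~A'. A secondary, more bookkeeping-type difficulty is the careful handling of the deterministic and stochastic time changes (Lemma~\ref{Oksendal}) together with the cancellations $B_\lambda B_\lambda^{-1}=I_m$, which are what make the orthogonal rotation transparent both in the law-preservation argument and in all the $L^1$-estimates.
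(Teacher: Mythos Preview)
Your proposal is correct and follows essentially the same route as the paper: establish that $\mathbb{E}_{\mathbb{P}}[P(\lambda)]$ is $\lambda$-independent via quasi-invariance, differentiate at $\lambda=0$ using Lemma~\ref{lemma tecnico mb} and Lemma~\ref{derivata sotto valore atteso} to justify the interchange, and then reorganize the three contributions with the commutator identity coming from the determining equations together with a second application of It\^o's formula to $Y(F)$. Your handling of the cross term $\mathbb{E}_{\mathbb{P}}\big[(\int_0^t H_\alpha\,dW^\alpha)(\int_0^t L(F)\,ds)\big]$ is somewhat more economical than the paper's: you substitute the It\^o representation $\int_0^t L(F)\,ds=F(X_t)-F(X_0)-\int_0^t\partial_iF\,\sigma^i_\alpha\,dW^\alpha$ directly and invoke It\^o isometry, whereas the paper expands this term via the It\^o product formula into pieces labelled \textbf{(A)} and \textbf{(B)} and recombines them; the two arguments are equivalent. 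One small point: when the density is differentiated, the stochastic integral that appears runs up to the fixed horizon $T$ rather than $t$, and a conditioning/martingale argument is needed to replace $\int_0^T$ by $\int_0^t$ before the It\^o-isometry step; the paper makes this reduction explicit.
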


\begin{proof}
Let $ P_{T_\lambda}(X,W):= (P_{T_{\lambda}}(X), P_{T_{\lambda}}(W)) $ be the solution process transformed by the one-parameter group $ T_{\lambda}$.
    Since $ V$ is an infinitesimal symmetry,  the one-parameter group $T_{\lambda}$ generated by $V$ is a finite stochastic symmetry, so, from Quasi invariance principle (\ref{quasi-invarianza})
    \begin{equation*}
        \mathbb{E}_{\mathbb{P}}[F(X_t)]= \mathbb{E}_{\mathbb{Q_{\lambda}}}[F(P_{T_{\lambda}}(X_t))] \ \ \ \ \forall t \in [0, T],
    \end{equation*}
    where $\mathbb{Q_{\lambda}}$ is the new probability measure obtained by Girsanov theorem via $ T_{\lambda}$.
    Applying It\^o formula and recalling the definition of transformed process we have that    
    \begin{equation*}
        \mathbb{E}_{\mathbb{Q_{\lambda}}}[F(P_{T_{\lambda}}(X_t))] = \mathbb{E}_{\mathbb{Q_{\lambda}}} \Bigg[\int_0^t L(F([P_{T_{\lambda}}(X)]_s)) ds + \partial_i (F)([P_{T_{\lambda}}(X)]_s) \sigma^i_{\alpha} d[P_{T_{\lambda}}(W^{\alpha})]_s\Bigg] = 
    \end{equation*}
    \begin{equation}\label{dim sparisce Wl}
        =\mathbb{E}_{\mathbb{Q_{\lambda}}} \Bigg[\int_0^t L(F([P_{T_{\lambda}}(X)]_s)) ds \Bigg] = \mathbb{E}_{\mathbb{Q_{\lambda}}} \Bigg[\int_0^t L(F(\Phi_{\lambda}(X_{f_{-\lambda}(s)}))) ds\Bigg],
    \end{equation}
    where the penultimate equality is justified by the martingale property of the stochastic integral. In fact, given $ g(s,X_s)=\partial_i (F \circ \Phi_{\lambda})(X_s) \sigma^i_{\alpha}$, if $ g$ is $\mathcal{B} \otimes \mathcal{F}$-measurable, if $\forall s$, $g(s,.) $ is $\mathcal{F}$-measurable and, $\forall s, g(s,.) \in L^2(\Omega, \mathcal{F}, \mathbb{Q}_{\lambda})$, $\mathbb{E}_{\mathbb{Q}_{\lambda}}\Bigg[\int_0^t |g(s,.)|^2 ds \Bigg]<\infty,$ then the stochastic integral with respect to the $\mathbb{Q}_{\lambda}$ -Brownian motion $ P_{T_{\lambda}}(W)$ is a (global) martingale, and therefore has a constant expected value equal to the initial value (which is zero). Hypothesis A' ensures that the integrand is in $\ L^2$  so that It\^o integral is well defined.\\
   Let \( f_{-\lambda} \) denote the inverse of \( f_{\lambda} \). Applying the change of variables \(  u = f_{-\lambda}(s) \)  and explicitly using the Radon-Nikodym derivative provided by Girsanov theorem (see Section \ref{Finite_stochastic_transformations_SDE}), we obtain
\begin{equation*}
      \mathbb{E}_{\mathbb{Q_{\lambda}}} \Bigg[\int_0^t L(F(\Phi_{\lambda}(X_{f_{-\lambda}(s)}))) ds\Bigg]= \mathbb{E}_{\mathbb{P}}\Bigg[\frac{ d\mathbb{Q}_{\lambda}}{d \mathbb{P}} \int_0^{f_{-\lambda}(t)}L(F(\Phi_{\lambda}(X_u))) f'_{\lambda}(u )))du  \Bigg].
\end{equation*}
Retracing the chain of equalities, we thus obtain that 
\begin{equation}\label{uguaglianza valori attesi}
    \mathbb{E}_{\mathbb{P}}[F(X_t)] = \mathbb{E}_{\mathbb{P}}\Bigg[\frac{ d \mathbb{Q}_{\lambda}}{d \mathbb{P}}\int_0^{f_{-\lambda}(t)}L(F(\Phi_{\lambda}(X_u))) f'_{\lambda}(u )du  \Bigg].
\end{equation}
Denoting by $ P(\lambda)$ the argument of the second expectation $    P(\lambda)=Z_{\lambda} \int_0^{f_{-\lambda}(t)}L(F(\Phi_{\lambda}(X_s)))f'_{\lambda}(s) ds$ and with $ Z_{\lambda}=\frac{ d\mathbb{Q}_{\lambda}}{d\mathbb{P}}$, Hypothesis A', jointly with Lemma \ref{lemma tecnico mb} ensure $\forall \lambda  \ \  \partial^2_{\lambda} P(\lambda) \in L^2,$ and therefore, from Lemma \ref{Kunita}, $    \partial_{\lambda} \mathbb{E}_{\mathbb{P}}[P(\lambda)]=\mathbb{E}_{\mathbb{P}}[\partial_{\lambda} P(\lambda)].$
Differentiating both sides of equation \eqref{uguaglianza valori attesi} with respect to the parameter $\lambda$ and evaluating at $\lambda=0$, we obtain 
\begin{equation}\label{5.7}
0 = \mathbb{E}_{\mathbb{P}} \Bigg[ \Bigg(\int_0^T H_{\alpha}(X_t,t) dW_t^{\alpha} \Bigg)\Bigg(\int_0^t L(F(X_s)) ds\Bigg)\Bigg] - m(t) \mathbb{E}_{\mathbb{P}}[L(F(X_t))] + 
\end{equation}
\begin{equation*}
    +\mathbb{E}_{\mathbb{P}}\Bigg[\int_0^t Y(L(F(X_s)))ds\Bigg]+\mathbb{E}_{\mathbb{P}}\Bigg[\int_0^t \tau(s) L(F(X_s)) ds\Bigg]
\end{equation*}
where the first term comes from the derivative of $ Z_{\lambda}$, recalling that (c.f. Theorem \ref{teo ricostruzione del flusso}) 
\begin{equation}\label{derivata h}
\partial_{\lambda}h_{\alpha,\lambda}|_{\lambda=0}=\big(\frac{1}{\sqrt{\eta}_{\lambda}}B_{\lambda}^{-1}H_{\alpha}\circ\Phi_{\lambda}\big)_{|_{\lambda=0}}=\frac{1}{\sqrt{\eta}_{0}}B_0^{-1}H_{\alpha}\circ\Phi_0=H_{\alpha};
\end{equation}
the second term is obtained by applying the fundamental theorem of calculus 
and from the contribution of the derivative w.r.t. $\lambda$ of the integration limits, considering that $\partial_{\lambda} f_{-\lambda}(t) _{|_{\lambda=0}} = - \partial_{\lambda} f_{\lambda}(t) _{|_{\lambda=0}} = - m(t) $ and that $\ f'_{\lambda}(t)=:\eta_{\lambda}(t)$; the third addend is due to the derivative w.r.t. $\lambda$ of $\Phi_{\lambda}$ and recalling that $\Phi_0=Id$; finally the last term is obtained by observing that $\partial_{\lambda} f'_{\lambda} = \partial_{\lambda} \eta_{\lambda} = \tau \circ \Phi_{\lambda} \cdot \eta_{\lambda}$, e $\eta_0=1$ e $\Phi_0=Id.$ Since for an infinitesimal symmetry $ [Y,L] = - \tau L + H\sigma D$, we have that
\begin{equation*}
    \mathbb{E}_{\mathbb{P}}[\int_0^t Y(L(F(X_s)))ds] =\mathbb{E}_{\mathbb{P}}[\int_0^t L(Y(F(X_s)))ds] + [Y,L] = 
\end{equation*}
\begin{equation*}
  =\mathbb{E}_{\mathbb{P}}[\int_0^t L(Y(F(X_s)))ds] - \mathbb{E}_{\mathbb{P}}[\int_0^t \tau(s) L(F(X_s))ds]+   \mathbb{E}_{\mathbb{P}}[\int_0^t H_{\alpha}(X_s,s) \sigma_{\alpha} D F (X_s) ds] .
\end{equation*}
By It\^o formula we have
\begin{equation*}
    \mathbb{E}_{\mathbb{P}}[Y(F(X_t))]= \mathbb{E}_{\mathbb{P}}[Y(F(X_0))] + \mathbb{E}_{\mathbb{P}}[\int_0^t L(Y(F(X_s)))ds] + \mathbb{E}_{\mathbb{P}}[\int_0^t \partial_iY(F(X_s)) \sigma_{\alpha}^i dW_s^{\alpha}] =
\end{equation*}
\begin{equation*}
   = \mathbb{E}_{\mathbb{P}}[Y(F(X_0))] + \mathbb{E}_{\mathbb{P}}[\int_0^t L(Y(F(X_s)))ds],
\end{equation*}
from which we obtain$\ 
    \mathbb{E}_{\mathbb{P}}[\int_0^t L(Y(F(X_s)))ds] = \mathbb{E}_{\mathbb{P}}[Y(F(X_t))] - \mathbb{E}_{\mathbb{P}}[Y(F(X_0))].$
Thus (\ref{5.7}) becomes
\begin{equation}\label{5.7 finale}
    0 = \mathbb{E}_{\mathbb{P}} \Bigg[ \left(\int_0^T H_{\alpha}(X_t,t) dW_t^{\alpha} \right)\left(\int_0^t L(F(X_s))) ds\right)\Bigg] - m(t) \mathbb{E}_{\mathbb{P}}[L(F(X_t))] +
\end{equation}
\begin{equation*}
    + \mathbb{E}_{\mathbb{P}}[Y(F(X_t))] - \mathbb{E}_{\mathbb{P}}[Y(F(X_0))] 
 + \mathbb{E}_{\mathbb{P}}\Bigg[\int_0^t H_{\alpha}(X_s,s) \sigma_{\alpha} D F (X_s) ds](X_s)ds\Bigg].
\end{equation*}

Let us consider the first addend of (\ref{5.7 finale}). Since stochastic integrals are martingales we get
\begin{equation*}
    \mathbb{E}_{\mathbb{P}} \Bigg[ \left(\int_0^T H_{\alpha}(X_t,t) dW_t^{\alpha} \right)\left(\int_0^t L(F(X_s)) ds\right)\Bigg] = \mathbb{E}_{\mathbb{P}} \Bigg[ \left(\int_0^t H_{\alpha}(X_t,t) dW_t^{\alpha} \right)\left(\int_0^t L(F(X_s)) ds\right)\Bigg]
\end{equation*}
and integrating by parts we have
\begin{equation*}
     \left(\int_0^t H_{\alpha}(X_t,t) dW_t^{\alpha} \right)\left(\int_0^t L(F(X_s)) ds\right) = \int_0^t \left( \int_0^s H_{\alpha}(X_{\tau},\tau)dW_{\tau}^{\alpha}\right) L(F(X_s))ds + 
\end{equation*}
\begin{equation*}
    +\int_0^t \left( \int_0^s L(F(X_{\tau}))d\tau\right) H_{\alpha}(X_s,s) dW_s^{\alpha} + \bcancel{\left[\int_0^t H_{\alpha}(X_s,s) dW_s^{\alpha}, \int_0^t L(F(X_s))ds\right]} 
\end{equation*}
where the last quadratic covariation is zero because the second term is absolutely continuous.
Therefore the first addend of (\ref{5.7 finale}) becomes
\begin{equation*}
      \mathbb{E}_{\mathbb{P}} \Bigg[ \left(\int_0^T H_{\alpha}(X_t,t) dW_t^{\alpha} \right)\left(\int_0^t L(F(X_s)) ds\right)\Bigg] =    \mathbb{E}_{\mathbb{P}} \Bigg[ \left(\int_0^t H_{\alpha}(X_t,t) dW_t^{\alpha} \right)\left(\int_0^t L(F(X_s)) ds\right)\Bigg]=
\end{equation*}
\begin{equation*}
    = \mathbb{E}_{\mathbb{P}}\Bigg[ \int_0^t \left( \int_0^s H_{\alpha}(X_{\tau},\tau)dW_{\tau}^{\alpha}\right) L(F(X_s))ds\Bigg] +\mathbb{E}_{\mathbb{P}}\Bigg[\int_0^t \left( \int_0^s L(F(X_{\tau}))d\tau\right) H_{\alpha}(X_s,s) dW_s^{\alpha}\Bigg]=: \textbf{(A)}+\textbf{(B)}. 
\end{equation*}
Again, since stochastic integrals are martingales we have
\begin{equation*}
\textbf{(A)}=     \mathbb{E}_{\mathbb{P}}\Bigg[\int_0^t \left( \int_0^s H_{\alpha}(X_{\tau},\tau) dW_{\tau}^{\alpha}\right) L(F(X_s)) ds + \int_0^t\left(\int_0^s H_{\alpha}(X_{\tau},\tau)dW_{\tau}^{\alpha}\right) \sigma_{\alpha} D F(X_s) dW_s^{\alpha} \Bigg]
\end{equation*}
and integrating by parts the second addend we obtain 
\begin{equation*}
\int_0^t \left(\int_0^s H_{\alpha}(X_\tau,\tau) dW_{\tau}^{\alpha}\right) \sigma_{\alpha} D F(X_s) dW_s^{\alpha} = \left(\int_0^t H_{\alpha}(X_s,s) dW_s^{\alpha}\right)\left(\int_0^t \sigma_{\alpha} DF (X_s) dW_s^{\alpha}\right)+
\end{equation*}
\begin{equation*}
     - \int_0^t \left( \int_0^s \sigma_{\alpha} D F (X_{\tau}) d W_{\tau}^{\alpha}\right) H_{\alpha}(X_s,s) dW_s^{\alpha} - \Big[\int_0^t H_{\alpha}(X_s,s) dW_s^{\alpha}, \int_0^t \sigma_{\alpha} D F(X_s) dW_s^{\alpha}\Big]
\end{equation*}
with the quadratic variation equal to
\begin{equation*}
    \Big[\int_0^t H_{\alpha}(X_s,s) dW_s^{\alpha}, \int_0^t \sigma_{\alpha} D F(X_s) dW_s^{\alpha}\Big]= \int_0^t \sigma_{\alpha} D F(X_s) \cdot H_{\alpha}(X_s,s).
\end{equation*}
So 
\small \begin{equation*}
  \textbf{(A)}+ \textbf{(B)} =   \mathbb{E}_{\mathbb{P}}\Big[\int_0^t  \left(\int_0^s H_{\alpha}(X_{\tau},\tau) dW_{\tau}^{\alpha}\right) L(F(X_s)) ds\Big] + \mathbb{E}_{\mathbb{P}}\Big[ \left(\int_0^t H_{\alpha}(X_s,s) dW_s^{\alpha}\right)\left(\int_0^t \sigma_{\alpha} D F (X_s) dW_s^{\alpha}\right)\Big] \end{equation*}
\small\begin{equation*} - \mathbb{E}_{\mathbb{P}}\Big[\int_0^t \left( \int_0^s \sigma_{\alpha} D F (X_{\tau}) d W_{\tau}^{\alpha}\right) H_{\alpha}(X_s,s) dW_s^{\alpha}\Big]  - \mathbb{E}_{\mathbb{P}}\Big[\int_0^t \sigma_{\alpha} D F(X_s) \cdot H_{\alpha}(X_s,s)\Big] +
\end{equation*}
\small\begin{equation*}
+\mathbb{E}_{\mathbb{P}}\Big[\int_0^t \left( \int_0^s L(F(X_{\tau}))d\tau\right) H_{\alpha}(X_s,s) dW_s^{\alpha}\Big],
\end{equation*}
\normalsize i.e., by factoring, by martingality of stochastic integrals and by It\^o formula the first term of \eqref{5.7 finale} becomes
\begin{equation*}
    \mathbb{E}_{\mathbb{P}}\Big[\left(\int_0^t L(F(X_s))ds + \int_0^t \sigma_{\alpha} D F(X_s) dW_s^{\alpha}\right) \left(\int_0^t H_{\alpha}(X_s,s) dW_s^{\alpha}\right)\Big]
\end{equation*}
\begin{equation*}
    - \bcancel{\mathbb{E}_{\mathbb{P}} \Bigg[\int_0^t \Bigg(\int_0^s L(F(X_{\tau}))d\tau + \int_0^s \sigma_{\alpha} D F(X_{\tau}) dW_{\tau}^{\alpha}\Bigg)H_{\alpha}(X_s,s) dW_s^{\alpha}\Bigg]
 } + 
\end{equation*}
\begin{equation*}
     - \mathbb{E}_{\mathbb{P}}\Big[\int_0^t \sigma_{\alpha} D F(X_s) H_{\alpha}(X_s,s) ds\Big] =   \mathbb{E}_{\mathbb{P}}[F(X_t)\int_0^t H_{\alpha}(X_s,s) dW_s^{\alpha}]  - \mathbb{E}_{\mathbb{P}}\Big[\int_0^t \sigma_{\alpha} DF(X_s) H_{\alpha}(X_s,s) ds\Big] .
\end{equation*}
Hence, substituting the expression found for the first addend \eqref{5.7 finale} becomes
\begin{equation*}
    0 =  \mathbb{E}_{\mathbb{P}}\Big[F(X_t)\int_0^t H_{\alpha}(X_s,s) dW_s^{\alpha}\Big]  - \bcancel{\mathbb{E}_{\mathbb{P}}\Big[\int_0^t \sigma_{\alpha} D F(X_s) H_{\alpha}(X_s,s) ds\Big]}  - m(t) \mathbb{E}_{\mathbb{P}}[L(F(X_t))] +
\end{equation*}
\begin{equation*}
    +\mathbb{E}_{\mathbb{P}}[Y(F(X_t))] - \mathbb{E}_{\mathbb{P}}[Y(F(X_0))] 
 +  \bcancel{\mathbb{E}_{\mathbb{P}}\Big[\int_0^t H_{\alpha}(X_s,s) \sigma_{\alpha} D F (X_s) ds\Big]}
\end{equation*}
and the proof is complete.
\end{proof}

\begin{osservazione}
The proof mirrors \textit{Theorem 28 in \cite{paper2023}}, which addresses the case without rotation. The only differences arise from the rotation matrix \( B_{\lambda} \), which appears in the transformed Brownian motion \( P_{T_{\lambda}}(W) \) and affects certain derivatives in \( T_{\lambda} \) (see (\ref{derivata h})). In contrast, \( P_{T_{\lambda}}(X) \) and the Radon-Nikodym derivative remain unchanged. Additionally, the Brownian component vanishes due to the martingale property of the stochastic integral, and since \( B_0 = I_m \), the derivative of the density \( Z_{\lambda} \) at \( \lambda = 0 \) coincides with the non-rotated case.
\end{osservazione}

\begin{osservazione}\label{remark generatore infinitesimale}
 The invariance of the integration by parts formula under a rotation of Brownian motion is linked to the well-known invariance of the infinitesimal generator of an 
$ SDE_{\mu,\sigma} $ under the same transformation. 
Indeed, this formula is obtained by taking advantage of the invariance of the solution's law, and since the process law is determined by its generator \( L \), a rotation of Brownian motion preserves both the solution law and the structure of the integration by parts formula.
\end{osservazione}

\section{Examples}\label{Examples}
\subsection*{Two dimensional Brownian motion}
Consider the  SDE
\begin{equation}\label{BM2}
    \begin{pmatrix}
        dX_t\\
        dY_t\\
        dZ_t
    \end{pmatrix} = \underbrace{\begin{pmatrix}
        0\\
        0\\
        1
    \end{pmatrix}}_{\mu}dt + \underbrace{\begin{pmatrix}
        1 & 0 \\
        0 & 1 \\
        0 & 0
    \end{pmatrix}}_{\sigma} \begin{pmatrix}
        dW^1_t\\
        dW^2_t
    \end{pmatrix}
\end{equation}
corresponding to a two-dimensional Brownian motion, where the third component, admitting solution $ Z_t = t$, has been introduced in order to include in our setting also time dependent transformations.\\
We are looking for infinitesimal symmetries $ V=(Y,C,\tau,H)$, with
\begin{equation*}
\begin{pmatrix}
   Y= \begin{pmatrix}
        f(x,y,z)\\
        g(x,y,z)\\
        m(x,y,z)
    \end{pmatrix};
 &

    C= \begin{pmatrix}
        0 & c(x,y,z) \\
        -c(x,y,z) & 0
    \end{pmatrix};
    &   \tau=\tau(x,y,z);

    &   H=\begin{pmatrix}
        H_1(x,y,z)\\
        H_2(x,y,z)
    \end{pmatrix}
\end{pmatrix}
\end{equation*} 
satisfying the determining equations from Theorem \ref{equazioni determinanti}.
From the first determining equation in \eqref{equazione determinante 1.1} we have 
\begin{equation*}
    L(Y)-Y(\mu) + \sigma H =\tau \mu \underbrace{\Rightarrow}_{Y(\mu)=0}
  \underbrace{ [ \frac{1}{2} \partial_{xx} + \frac{1}{2} \partial_{yy}  + 1 \partial_z]}_{L=A^{ij}\partial_{ij}+\mu^i\partial_i} \underbrace{\begin{pmatrix}
       f\\
       g\\
       m
   \end{pmatrix}}_{Y} + \underbrace{\begin{pmatrix}
      1 & 0 \\
        0 & 1 \\
        0& 0
   \end{pmatrix}}_{\sigma } \underbrace{\begin{pmatrix}
       H_1\\
       H_2\\
   \end{pmatrix}}_{H} = \tau \underbrace{\begin{pmatrix}
       0\\
       0\\
       1
   \end{pmatrix}}_{\mu}
\end{equation*}
$\ i.e.$ 
\begin{equation*}
    \begin{matrix}
        \frac{1}{2} f_{xx}+\frac{1}{2}f_{yy} + f_z + k_x=0; &
     \frac{1}{2} g_{xx}+\frac{1}{2}g_{yy} + g_z + k_y=0; &
     \frac{1}{2} m_{xx}+\frac{1}{2}m_{yy} + m_z =\tau\\
    \end{matrix}.
\end{equation*}
From the second determining equation we have
\begin{equation*}
    [Y,\sigma] + \frac{1}{2}\tau\sigma+\sigma C=0 \underbrace{\Rightarrow}_{Y(\sigma)=0} \underbrace{- \begin{pmatrix}
        f_x & f_y & f_z \\
        g_x & g_y & g_z \\
        m_x & m_y & m_z 
    \end{pmatrix}\begin{pmatrix}
    1 & 0\\
    0& 1 \\
    0 & 0
    \end{pmatrix}}_{\sigma(Y)= D(Y) \sigma} + \underbrace{\begin{pmatrix}
        \frac{1}{2}\tau & 0\\
        0 & \frac{1}{2} \tau\\
        0 & 0
    \end{pmatrix}}_{\frac{1}{2}\tau \sigma} + \underbrace{\begin{pmatrix}
        0 & c\\
        -c & 0\\
        0 & 0
    \end{pmatrix}}_{\sigma C}=0
\end{equation*}
$\ i.e.$
\begin{equation*}
    \begin{matrix}
        f_x=\frac{1}{2}\tau; &
        f_y=c; &
        g_x=-c; &
        g_y=\frac{1}{2} \tau; &
        m_x=0;&
        m_y=0.
    \end{matrix}
\end{equation*}
So we must solve
\begin{equation*}
    \begin{matrix}
        \frac{1}{2} f_{xx}+\frac{1}{2}f_{yy}+f_z+ H_1=0, \ \ &
        \frac{1}{2} g_{xx}+\frac{1}{2}g_{yy}+g_z+H_2=0,\ \ &
        \frac{1}{2} m_{xx}+\frac{1}{2}m_{yy}+m_z = \tau, \\
        f_x=\frac{1}{2}\tau,\ \ \ \ \ \  \  \  \ \ \
        f_y=c,&
        g_x=-c, \ \ \ \  \ \ \ \ \ 
        g_y=\frac{1}{2}\tau,&
        m_x=0,\ \ \ \ \ \ \ \ \ \ 
        m_y=0.
    \end{matrix}
\end{equation*}
Since there are no conditions on the components of the vector field $ H$, we can find two families of infinite-dimensional symmetries, each of them depending on an arbitrary function of $ z$ (respectively, $\alpha(z)$ and $\beta(z)$).\\
\noindent In fact, from the last two equations $ m_x=m_y=0$ we obtain $ m(x,y,z)=m(z),$ from which two choices are possible: $m$ is an arbitrary function of $ z$, or $ m \equiv 0$. The first family of symmetries $V_{\alpha}$ is obtained by choosing
 $ m=m(z)$ , so
\begin{equation}\label{Va}
V_{\alpha}= \Bigg( \begin{pmatrix}
    \frac{1}{2}\alpha(z)x \\ \frac{1}{2}\alpha(z) y \\ \int \alpha(z) dz
\end{pmatrix}, \begin{pmatrix}
    0 & 0\\ 0 & 0
\end{pmatrix}, \alpha(z), \begin{pmatrix}
    -\frac{1}{2} x \alpha'(z)\\ -\frac{1}{2}y \alpha'(z)
\end{pmatrix}\Bigg) .   
\end{equation}
The second family is obtained by choosing $ m=0$:
\begin{equation}\label{Vb}
    V_{\beta}= \Bigg( \begin{pmatrix}
        \beta(z)y\\-\beta(z)x\\0
    \end{pmatrix}, \begin{pmatrix}
        0 & \beta(z) \\ - \beta(z) & 0 
    \end{pmatrix}, 0, \begin{pmatrix}
        -y\beta'(z)\\ x\beta'(z)
    \end{pmatrix}
    \Bigg).
\end{equation}

\begin{osservazione}\label{invariance mb}
Symmetries $V_{\alpha}$ are related to the random time invariance of Brownian 
motion,
while symmetries $V_{\beta}$ are related to the invariance of Brownian motion with respect to random rotation (see Section \ref{Random rotation of Brownian motion}). Indeed,  given $ B(t)$ a rotation matrix depending on time $ t$, while Proposition \ref{invarianza per rotazioni} ensures that $ W'_t=\int_0^t B(t) dW_t$ is still a $\mathbb{P}-$Brownian motion, the symmetry $V_{\beta}$ provides a generalization of this classical result, considering instead of $W'$ the process $\tilde{W'}_t= B(t)  W_t.$ Since $W'_{t'}$ is already a $\mathbb{P}$-Brownian motion, $\tilde{W'}$ cannot be himself a $\mathbb{P}$-Brownian motion, but is still a Brownian motion with respect to a new probability measure $\mathbb{Q}.$ This is well known from classical It\^o's calculus, since \[ d\tilde{W'}_t=d (B(t)W_t)=B(t)dW_t+B'(t)W_t dt=B(t)dW_t-(-B'(t)W_t) dt=dW'_t-(-B'(t)W_t)dt\] and therefore, $d\tilde{W'}$
  is composed of a Brownian component $dW$ from which a drift term is subtracted, namely $h(t,X_t)=h(t,W_t)=B'(t)W_t$ (remember that in this example the solution $ X$ is the two dimensional Brownian motion $W$). By Girsanov’s theorem, $\tilde{W'}$ is a Brownian motion under the new probability measure $\mathbb{Q}$, whose Radon-Nikodym density is given by \footnotesize$\ \frac{d\mathbb{Q}}{d\mathbb{P}}_{|_{\mathcal{F_t}}}= \exp(\int_0^t h(s,X_s) dW_s-\frac{1}{2}\int_0^t |h(s,X_s)|^2ds)= \exp\Bigg( - \int_0^t  B'(s) W_s \cdot dW_s - \frac{1}{2} \int_0^T | B'(s) \cdot W_s|^2 ds \Bigg).$ \normalsize  Symmetry $ V_{\beta}$ encodes this invariance property. Indeed, from Theorem \ref{teo ricostruzione del flusso}, we know that $V_{\beta}$ is related to a finite transformation $ T_{\lambda}=(\Phi_{\lambda}, B_{\lambda}, \eta_{\lambda}, h_{\lambda})$ where\footnote{From flow reconstruction it is sufficient to solve the following ODEs:\\ \scriptsize $\frac{d x_{\lambda}}{d\lambda}=\beta(z_{\lambda})y, \frac{d y_{\lambda}}{d\lambda}=-\beta(z_{\lambda})x, \frac{dz_{\lambda}}{d\lambda}=0, \frac{d B_{\lambda}}{d\lambda}=\begin{pmatrix}
      0&\beta(z_{\lambda}\\-\beta(z_{\lambda}&0
  \end{pmatrix}B_{\lambda}, \frac{d\eta_{\lambda}}{d\lambda}=0,\\ \frac{d h_{\lambda}}{d\lambda}=\begin{pmatrix}
      \cos(\beta(z)\lambda)&-\sin(\beta(z)\lambda)\\ \sin(\beta(z)\lambda)&\cos(\beta(z)\lambda)
  \end{pmatrix}\cdot \begin{pmatrix}
      -y_{\lambda}\beta'(z)\\x_{\lambda}\beta'(z)
  \end{pmatrix} $}
  
 \footnotesize   \[  \Phi_{\lambda}\begin{pmatrix}
      x\\y\\z
  \end{pmatrix}=\begin{pmatrix} \cos(\beta(z)\lambda)\cdot x + \sin(\beta(z)\lambda)\cdot y\\ -\sin(\beta(z)\lambda)\cdot x + \cos(\beta(z)\lambda)\cdot y\\ z\end{pmatrix}; \ \ \ \   B_{\lambda}(x,y,z)=\begin{pmatrix}
      \cos(\beta(z)\lambda) & \sin(\beta(z)\lambda)\\-\sin(\beta(z)\lambda) & \cos(\beta(z)\lambda)
  \end{pmatrix};\]\[ \eta_{\lambda}(z)=1;\ \ \ \ \ \ \ \   h_{\lambda}(x,y,z)=\begin{pmatrix}
      -\beta'(z)\lambda \cdot y \\ \beta'(z)\lambda\cdot x
  \end{pmatrix}.\] 
  \normalsize Since $ V_{\beta}$ is a symmetry, by construction we have that $ (P_{T_{\lambda}}(X,W))$ solves the same  SDE as $(X,W)$, that is
\footnotesize  \[ \underbrace{d \begin{pmatrix}
      \cos(\beta\lambda)\cdot X_t + \sin(\beta\lambda)\cdot Y_t\\
      -\sin(\beta\lambda)\cdot X_t + \cos(\beta\lambda)\cdot Y_t \\ Z_t
  \end{pmatrix}}_{dP_{T_{\lambda}}(X)=d \tilde{W}_t}=\underbrace{\begin{pmatrix}
      \cos(\beta\lambda)&\sin(\beta\lambda)\\-\sin(\beta\lambda)&\cos(\beta\lambda)\\0&0
  \end{pmatrix}\begin{pmatrix}
      dW^1_t\\dW^2_t
  \end{pmatrix}-\begin{pmatrix}
      \beta'\lambda \sin(\beta\lambda)\cdot X_t-\beta'\lambda \cos(\beta\lambda)\cdot Y_t\\
       \beta'\lambda \cos(\beta\lambda)\cdot X_t+\beta'\lambda \sin(\beta\lambda)\cdot Y_t
  \end{pmatrix}dt}_{dP_{T_{\lambda}}(W)= dW'_t-hdt} \]
  \normalsize In particular, $ (X,W)$ under $\mathbb{P}$ has the same law as $ (P_{T_{\lambda}}(X,W))$ under $\mathbb{Q}_{\lambda}$: $\ P_{T_{\lambda}}(X)=\tilde{W}$ is a $\mathbb{Q}_{\lambda}-$Brownian motion.\\
In the same way, if we consider the deterministic time change $ t'=f(t)$, then $ W'_{t'}=\int_0^{t'} \sqrt{f'(f^{-1}(s))}dW_{f^{-1}(s)}$ is still a $\mathbb{P}$-Brownian motion by Proposition \ref{invarianza per rotazioni}.

The symmetry $V_{\alpha}$ provides a generalization of the previous fact, considering $\tilde{W}_{t'}= \sqrt{f'(f^{-1}(t'))}W_{f^{-1}(t')}$ instead of 
the integral $\int_0^{t'} \sqrt{f'(f^{-1}(s))}dW_{f^{-1}(s)}$. Again, since $ W'_{t'}$ is already a $\mathbb{P}$-Brownian motion, $\tilde{W'}_{t'}$ cannot 
be himself a $\mathbb{P}$-Brownian motion. But, since $V_{\alpha}$ is a symmetry, using Girsanov transformation we have that $\tilde{W'}$ is a $\mathbb{Q}$-Brownian motion, where $\mathbb{Q}$ has density with respect to $\mathbb{P}$ given by \small $ \frac{d\mathbb{Q}}{d\mathbb{P}}|_{\mathcal{F}_T}=\exp\Bigg( \int_0^T -\frac{f''(s)}{2\sqrt{f'(s)}}W_s dW_s + \frac{1}{2}\int_0^T \Big|\frac{f''(s)}{2\sqrt{f'(s)}}W_s\Big|^2 ds\Bigg).$ \normalsize 
This first example of two-dimensional Brownian motion shows how the theory of symmetries applied to SDEs is a useful tool for studying invariance properties of stochastic processes and analyzing their laws.
\end{osservazione}
\noindent We now want to apply Theorem \ref{teo integrazione per parti 2} to the family $V_{\beta}$. First, we need to verify Hypothesis A' \eqref{ipotesi A'}:
\begin{equation*}
   \begin{matrix}
    H_{\alpha}=\begin{pmatrix}
       -y\beta'(z)\\x\beta'(z)
   \end{pmatrix}; \ \ \ \ \ \ \ \ \ \ \ \ \
        C_{\alpha,k} H_{k} =\begin{pmatrix}
            \beta(z)\beta'(z)x\\\beta(z)\beta'(z)y
        \end{pmatrix};
        \\ Y(H_{\alpha})= \beta(z) y \partial_x \begin{pmatrix}
         -y\beta'(z)\\
            x\beta'(z)
    \end{pmatrix} - \beta(z) x \partial_y \begin{pmatrix}
 -y\beta'(z)\\
            x\beta'(z)
    \end{pmatrix}=
    \begin{pmatrix}
        \beta(z) \beta'(z) x \\
        \beta(z) \beta'(z) y
    \end{pmatrix};\\
    L(Y^i) =
    [\partial_z + \frac{1}{2}\partial^2_x + \frac{1}{2}\partial^2_y]\begin{pmatrix}
        \beta(z)y\\-\beta(z)x\\0
    \end{pmatrix}=
    \begin{pmatrix}
        \beta'(z) \\ -\beta'(z) \\ 0 
    \end{pmatrix};\\
     \Sigma_{\alpha}(Y^i) = \begin{bmatrix}
        \Sigma_{\alpha}(Y^1)|&\Sigma_{\alpha}(Y^2)|&\Sigma_{\alpha}(Y^3)
    \end{bmatrix}=\\
    =\begin{bmatrix}
      [   \sigma^j_{\alpha}\partial_j ](\beta(z)y)&      [   \sigma^j_{\alpha}\partial_j ](-\beta(z)x)&      [   \sigma^j_{\alpha}\partial_j ](0)
    \end{bmatrix}    =\begin{bmatrix}
        \begin{pmatrix}
            0\\\beta(z)
        \end{pmatrix}| & \begin{pmatrix}
            -\beta(z) \\0
        \end{pmatrix}| \begin{pmatrix}
            0\\0
        \end{pmatrix}
    \end{bmatrix};
    \\
    L(Y(Y^i))=[\partial_z + \frac{1}{2}\partial^2_x + \frac{1}{2}\partial^2_y]\Bigg( \beta(z)y\partial_x\begin{pmatrix}
        \beta(z)y\\-\beta(z)x\\0
    \end{pmatrix} - \beta(z)x\partial_y\begin{pmatrix}
        \beta(z)y\\-\beta(z)x\\0
    \end{pmatrix}  \Bigg)=\\
   = [\partial_z + \frac{1}{2}\partial^2_x + \frac{1}{2}\partial^2_y]\begin{pmatrix}
        -\beta(z)^2x \\ -\beta(z)^2 y \\ 0
        \end{pmatrix}= \begin{pmatrix}
            -2\beta(z)\beta'(z) \\ -2\beta(z)\beta'(z) y \\ 0
        \end{pmatrix} ;      \\   
        \Sigma_{\alpha}(Y(Y^i))=\begin{bmatrix}
           \Sigma_{\alpha}(Y(Y^1))|&\Sigma_{\alpha}(Y(Y^2))|&\Sigma_{\alpha}(Y(Y^3))
        \end{bmatrix}=\\=\begin{bmatrix}
             [  \sigma^j_{\alpha}\partial_j ](-\beta(z)^2x)& [  \sigma^j_{\alpha}\partial_j ](-\beta(z)^2y)& [   \sigma^j_{\alpha}\partial_j ](0)
        \end{bmatrix}=\begin{bmatrix}
            \begin{pmatrix}
                -\beta(z)^2\\0
            \end{pmatrix}|&\begin{pmatrix}
                0\\-\beta(z)^2
            \end{pmatrix}|&\begin{pmatrix}
                0\\0
            \end{pmatrix}
        \end{bmatrix}.
    \end{matrix}
\end{equation*}
Since the previous expressions are continuous in  $z$ at most with linear growth in $ x$ and $y$, we have that $C_{\alpha,k} H_{k}, Y(H_{\alpha}), L(Y^i), \Sigma_{\alpha}(Y^i), L(Y(Y^i)), \Sigma_{\alpha}(Y(Y^i)) \in L^2$  since are polynomials of Gaussian r.v. \noindent
Applying Theorem \ref{teo integrazione per parti 2} we finally get our integration by parts formula for any arbitrary function of time $\beta$: 
\begin{equation}\label{ibp vb}
    0 = \mathbb{E}_{\mathbb{P}} \Big[ F(X_t,Y_t) \int_0^t - Y_s \beta'(z)dW^1_s + X_s \beta'(z) d W^2_s  \Big] + \mathbb{E}_{\mathbb{P}}\Big[\beta(t) Y_t\partial_x F(X_t,Y_t)- \beta(t)X_t \partial_y F(X_t,Y_t) \Big].
\end{equation}
\\
In the same way, we can apply Theorem \ref{teo integrazione per parti 2} to the family $V_{\alpha}$.  First of all, we need to verify, once again, Hypothesis A' \eqref{ipotesi A'}:
\begin{equation*}
    H_{\alpha}=\begin{pmatrix}
        -\frac{1}{2}x\alpha'(z)\\ \frac{1}{2}y\alpha'(z)
    \end{pmatrix};\ \ \ \ \ \ 
    C_{\alpha,k }H_{k}=\begin{pmatrix}
        0 \\0
    \end{pmatrix};
\end{equation*}
\begin{equation*}
    Y(H_{\alpha})=[\frac{1}{2}\alpha(z)x\partial_x + \frac{1}{2}\alpha(z)y\partial_y+\int \alpha(z)dz \partial_z]\begin{pmatrix}
        -\frac{1}{2}x\alpha'(z)\\ \frac{1}{2}y\alpha'(z)
    \end{pmatrix}=\begin{pmatrix}
        -\frac{1}{4}\alpha'(z)\alpha(z)x -\frac{1}{2}x \alpha''(z)\int\alpha(z)dz\\
        \frac{1}{4}\alpha'(z)\alpha(z)x +\frac{1}{2}y \alpha''(z)\int\alpha(z)dz
    \end{pmatrix};
\end{equation*}
\begin{equation*}
    L(Y^i)=[\partial_z + \frac{1}{2}\partial^2_x + \frac{1}{2}\partial^2_y]\begin{pmatrix}
        \frac{1}{2}\alpha(z)x \\ \frac{1}{2}\alpha(z)y\\ \int \alpha(z)dz
    \end{pmatrix}=\begin{pmatrix}
        \frac{1}{2}\alpha'(z)x \\ \frac{1}{2}\alpha'(z) y \\ \alpha(z)
    \end{pmatrix};
\end{equation*}
\begin{equation*}
    \Sigma_{\alpha}(Y^i)= \begin{bmatrix}
        \Sigma_{\alpha}(Y^1)|&\Sigma_{\alpha}(Y^2)|&\Sigma_{\alpha}(Y^3)
    \end{bmatrix}   =\begin{bmatrix}
      [\sigma^j_{\alpha}\partial_j](\frac{1}{2}\alpha(z)x)&[\sigma^j_{\alpha}\partial_j](\frac{1}{2}\alpha(z)y)&[\sigma^j_{\alpha}\partial_j](\int \alpha(z)dz)
    \end{bmatrix} =\end{equation*}\begin{equation*}=\begin{bmatrix}
     \begin{pmatrix}   \frac{1}{2}\alpha(z)\\0 \end{pmatrix}|&\begin{pmatrix}
         0\\ \frac{1}{2}\alpha(z)
     \end{pmatrix}|&\begin{pmatrix}
         0\\0
     \end{pmatrix}
    \end{bmatrix} ;
\end{equation*}
\begin{equation*}
    L(Y(Y^i))=\Big[\partial_z + \frac{1}{2}\partial^2_x + \frac{1}{2}\partial^2_y\Big]\Bigg(\Big[\frac{1}{2}\alpha(z)x\partial_x + \frac{1}{2}\alpha(z)y\partial_y+\int \alpha(z) dz \partial_z\Big] \begin{pmatrix}
        \frac{1}{2}\alpha(z) x \\ \frac{1}{2}\alpha(z) y \\ \int \alpha(z) dz
    \end{pmatrix}\Bigg) =
\end{equation*}
\begin{equation*}
    = [\partial_z + \frac{1}{2}\partial^2_x + \frac{1}{2}\partial^2_y]\begin{pmatrix}
        \frac{1}{4}\alpha^2(z) x + \frac{1}{2}  \alpha'(z) x \int \alpha(z) dz \\ \frac{1}{4}\alpha^2(z) y + \frac{1}{2} \alpha'(z) y \int \alpha(z) dz \\ \alpha(z) \int \alpha(z) dz
    \end{pmatrix}= \begin{pmatrix}
        \alpha(z) \alpha'(z) x + \frac{1}{2} x \alpha''(z) \int \alpha(z) dz \\  \alpha(z) \alpha'(z) y + \frac{1}{2} y \alpha''(z) \int \alpha(z) dz  \\ \alpha'(z) \int \alpha(z) dz + \alpha^2(z)
    \end{pmatrix};
\end{equation*}
\begin{equation*}
    \Sigma_{\alpha}(Y(Y^i))=\begin{bmatrix}
           \Sigma_{\alpha}(Y(Y^1))|&\Sigma_{\alpha}(Y(Y^2))|&\Sigma_{\alpha}(Y(Y^3))
        \end{bmatrix}=
\end{equation*}
\begin{equation*}
    =\begin{bmatrix}
        \begin{pmatrix}
            \frac{1}{4}\alpha^2(z)+\frac{1}{2}\alpha'(z)\int \alpha(z)dz \\ 0
        \end{pmatrix}|\begin{pmatrix}
            0\\ \frac{1}{4}\alpha^2(z)+\frac{1}{2}\alpha'(z)\int \alpha(z)dz 
        \end{pmatrix}| \begin{pmatrix}
            0\\0
        \end{pmatrix}
    \end{bmatrix}.
\end{equation*}
Even in this case the previous expressions are continuous in  $z$ at most with linear growth in $x$ and $y$. Thus, we have that $H_{\alpha},C_{\alpha,k} H_{k}, Y(H_{\alpha}), L(Y^i), \Sigma_{\alpha}(Y^i), L(Y(Y^i)), \Sigma_{\alpha}(Y(Y^i)) \in L^2$  since are polynomials of Gaussian r.v. \noindent
Applying Theorem \ref{teo integrazione per parti 2} we finally get our integration by parts formula for $V_{\alpha}$ for any arbitrary function of time $\alpha$: 
\begin{equation*}
    \int_0^t \alpha(z) dz \cdot \mathbb{E}_{\mathbb{P}}\Big[\frac{1}{2} D^2F(X_t,Y_t)\Big]= \mathbb{E}_{\mathbb{P}}\Big[F(X_t,Y_t) \int_0^t -\frac{1}{2}X_t \alpha'(z) dW^1_t + \frac{1}{2} Y_t \alpha'(z) dW^2_t\Big] +
\end{equation*}
\begin{equation}\label{ibp va}
    + \mathbb{E}_{\mathbb{P}}\Big[\frac{1}{2}\alpha(t) X_t \partial_x F(X_t,Y_t) + \frac{1}{2} \alpha(t) Y_t \partial_y F(X_t,Y_t)\Big].
\end{equation}
\begin{osservazione}\label{remark stein}
With appropriate choices of $\alpha$ and $\beta$,  it is possible to recover well-known formulas from probability theory. For instance, choosing $\alpha$ to be a (non zero) constant, then, dividing  (\ref{ibp va}) by $\alpha$ we get
    \begin{equation}\label{stein va}t \cdot \mathbb{E}_{\mathbb{P}}\Big[ \partial_x^2 F(X_t,Y_t)+\partial_y^2F(X_t,Y_t)\Big]=\mathbb{E}_{\mathbb{P}}\Big[X_t\partial_x F(X_t,Y_t)+Y_t\partial_y F(X_t,Y_t)\Big],\end{equation}
    that is exactly what Stein's lemma states for $ (X_t,Y_t)$ solving  \eqref{BM2}. Indeed, Stein's lemma says that random vector  $  X = (X_1, X_2,.., X_d)$ is  centered Gaussian if and only if \begin{equation}\label{stein}\mathbb{E}_{\mathbb{P}}\Bigg[\sum_{i=1}^d X_i \partial_i f(X)\Bigg]=\mathbb{E}_{\mathbb{P}}\Bigg[\sum_{i,j=1}^d Cov(X_i,X_j) \partial_{ij} f(X)\Bigg]\end{equation} for all smooth and bounded enough $ f: \mathbb{R}^d \rightarrow \mathbb{R}.$ In our example, $(X,Y)$ is a two dimensional Brownian motion, and so $ X_t, Y_t$ are independent and $\sim \mathcal{N}(0,t)$. In particular, $ Cov(X_t,X_t)=t$ and $Cov(X_t,Y_t)=0$, hence (\ref{stein}) and (\ref{stein va}) coincide.\\
    Similarly, choosing $\beta$ (non-zero) constant, then, dividing by $\beta$ (\ref{ibp vb}) becomes
    \begin{equation}\label{isserlis vb}
        \mathbb{E}_{\mathbb{P}}\Big[Y_t \partial_xF(X_t,Y_t)\Big]=\mathbb{E}_{\mathbb{P}}\Big[X_t \partial_y F(X_t,Y_t)\Big]
    \end{equation}
   which is a consequence of Isserli-Wick's theorem in probability theory. Indeed, Isserli-Wick's theorem states that if $ X=(X_1,...,X_d)$ is a zero-mean multivariate normal random vector, then $\forall j=1,..,d$
\begin{equation}\label{isserli}
    \mathbb{E}_{\mathbb{P}}\Big[X_j f(X_1,...,X_d)\Big]=\sum_{i=1}^d Cov(X_j,X_i)\mathbb{E}_{\mathbb{P}}\Big[\partial_i f(X_1,..,X_d)\Big].
\end{equation}
Applying Isserli's theorem to $ f=\partial_x F$ and $ f=\partial_y F$ respectively, then, thanks to Schwarz's theorem, both the right hand side and the left hand side of (\ref{isserlis vb}) are equal to  $ t\cdot \mathbb{E}_{\mathbb{P}}[\partial_{xy} F(X_t,Y_t)]$, hence they coincide and (\ref{isserlis vb}) is recovered. 
    
\end{osservazione}

\subsection*{An additive perturbation of Brownian motion}
Let us now consider the following SDE
\begin{equation}\label{additive mb}
    \begin{pmatrix}
        dX_t\\dY_t\\dZ_t
    \end{pmatrix}= a(X^2_t + Y^2_t) \begin{pmatrix}
        X_t\\Y_t\\0
    \end{pmatrix}dt + b(X^2_t + Y^2_t) \begin{pmatrix}
        -Y_t\\X_t\\0
    \end{pmatrix}dt + \begin{pmatrix}
        0\\0\\1
    \end{pmatrix}dt
    + \begin{pmatrix}
        dW^1_t\\dW^2_t\\0
    \end{pmatrix},
\end{equation}
where $a, b$ have at most polynomial growth and $ a$ is a negative function.
Naturally, \eqref{additive mb} can be restated as follows to emphasize the forms of  $\mu$ and $\sigma$:
\begin{equation*}
    \begin{pmatrix}
        dX_t\\dY_t\\dZ_t
    \end{pmatrix}=\underbrace{\begin{pmatrix}
        a(R^2_t)X_t-b(R^2_t)Y_t\\a(R^2_t)Y_t+b(R^2_t)X_t\\1
    \end{pmatrix}}_{\mu}dt+\underbrace{\begin{pmatrix}
        1 & 0\\0 & 1 \\ 0 & 0
    \end{pmatrix}}_{\sigma}\begin{pmatrix}
        dW^1_t \\ dW^2_t
    \end{pmatrix}
\end{equation*}
where $R^2_t=X^2_t+Y^2_t$. To compute the symmetries of (\ref{additive mb}) we consider the determining equation (\ref{equazioni determinanti}) and we look for
 infinitesimal symmetries $V=(Y,C,\tau,H)$ with
\begin{equation*}
\begin{pmatrix}
   Y= \begin{pmatrix}
        f(x,y,z)\\
        g(x,y,z)\\
        m(x,y,z)
    \end{pmatrix};
 &

    C= \begin{pmatrix}
        0 & c(x,y,z) \\
        -c(x,y,z) & 0
    \end{pmatrix};
    &   \tau=\tau(x,y,z);

    &   H=\begin{pmatrix}
        H_1(x,y,z)\\
        H_2(x,y,z)
    \end{pmatrix}
\end{pmatrix}
\end{equation*} 
satisfying the determining equations. From the second determining equation in \eqref{equazione determinante 1.1}, after straightforward computations, we obtain

\begin{equation}\label{additive mb 1}
    \begin{matrix}
        f_x(x,y,z)=\frac{1} {2}\tau(x,y,z),&f_y(x,y,z)=c(x,y,z),\\
        g_x(x,y,z)=-c(x,y,z),&
        g_y(x,y,z)=\frac{1}{2}\tau(x,y,z),\\
        m_x(x,y,z)=0&m_y(x,y,z)=0.
            \end{matrix}
\end{equation}
On the other hand, from the first determining equation in \eqref{equazione determinante 1.1} we obtain
\begin{equation*}
2x^2 f a'(r^2)+f a(r^2)-2xy f b'(r^2)+2xy g a'(r^2)-2y^2g b'(r^2) -g b(r^2)   -\frac{1}{2}f_{xx}-\frac{1}{2}f_{yy}+
\end{equation*}
\begin{equation}\label{additive mb 2}
    -(a(r^2)x-b(r^2)y)f_x- (a(r^2)y+b(r^2)x)f_y-f_z-H_1+\tau(    a(r^2)x-b(r^2)y)=0 
\end{equation}
\\
\begin{equation*}
       2xy f a'(r^2)+f b(r^2)+2x^2f b'(r^2)+g a(r^2)+2y^2g a'(r^2)+2xy g b'(r^2) -\frac{1}{2}g_{xx}-\frac{1}{2}g_{yy}+
\end{equation*}
\begin{equation}\label{additive mb 3}
-(a(r^2)x-b(r^2)y)g_x  - (a(r^2)y+b(r^2)x)g_y-g_z-H_2+\tau(a(r^2)y+b(r^2)x)    =0
\end{equation}
\\
\begin{equation}\label{additive mb 4}
     -\frac{1}{2}m_{xx}-\frac{1}{2}m_{yy}-(a(r^2)x-b(r^2)y)m_x - (a(r^2)y+b(r^2)x)m_y-m_z+\tau=0.
\end{equation}
\\
As in the previous case, from $ m_x=m_y=0$ two choices are possible: either $ m\equiv 0$ or $ m=m(z)$. \\
Hence, the first family of symmetries is obtained by choosing $m \equiv 0$. From \eqref{additive mb 4} we thus have that also $\tau=0$,
so, from  \eqref{additive mb 1} we obtain 
\begin{equation*}
    c(x,y,z)=\beta(z),\ \ \ \ \ \ f(y,z)=\beta(z)y, \ \ \ \ \ \  g(x,z)=-\beta(z)x
\end{equation*}
with $\beta$ an arbitrary function of time. Replacing these quantities into \eqref{additive mb 2} and \eqref{additive mb 3} we obtain
\begin{equation*}
    H_1=-y\beta'(z), \ \ \ \  \ \ \ \ \ \ H_2 =x\beta'(z).
\end{equation*}
Hence, we get the first family of symmetry of (\ref{additive mb}) 
\begin{equation}\label{simmetria additive mb 1}
    V_{\beta}=\Bigg( \begin{pmatrix}
        \beta(z)y\\-\beta(z)x\\0
    \end{pmatrix}, \begin{pmatrix}
        0 & \beta(z) \\ - \beta(z) & 0 
    \end{pmatrix}, 0, \begin{pmatrix}
        -y\beta'(z)\\ x\beta'(z)
    \end{pmatrix}\Bigg)
\end{equation}
depending on an arbitrary function $\beta(z)$.\\
\\
The second family is obtained by choosing $ m=m(z).$ From \eqref{additive mb 4} we thus have $\tau=m_z(z)$, so we can put
\begin{equation*}
 \tau=\alpha(z), \ \ \ \ \ \ \ \ \ \ \ m(z)=\int \alpha(z)dz,
\end{equation*}

with $\alpha$ an arbitrary function of time. From \eqref{additive mb 1} we obtain
\begin{equation*}
    f=\frac{1}{2}\alpha(z)x, \ \ \ \ \ \ g=\frac{1}{2}\alpha(z)y, \ \ \ \ \ \ \ c=0.
\end{equation*}
Replacing these equalities into \eqref{additive mb 2} and \eqref{additive mb 3} we obtain
\begin{equation*}H_1= \alpha(z) x^3a'(r^2)-x^2y\alpha(z)b'(r^2)+xy^2\alpha(z)a'(r^2)-y^3\alpha(z)b'(r^2)-\frac{1}{2}\alpha'(z)x +x\alpha(z)a(r^2)-y\alpha(z)b(r^2)
\end{equation*} 
\begin{equation*}
    H_2=  -\frac{1}{2}y\alpha'(z) +\alpha(z) y^3 a'(r^2)+\alpha(z) x^2ya'(r^2)+ \alpha(z) x y^2 b'(r^2)+\alpha(z) x^3 b'(r^2) +\alpha(z) y a(r^2)  + \alpha(z) x b(r^2) .
\end{equation*}

Thus, the second family of symmetries of \eqref{additive mb} is the following 
\begin{equation}\label{simmetria additive mb 2}
    V_{\alpha}=\Bigg(  \begin{pmatrix}
    \frac{1}{2}\alpha x \\ \frac{1}{2}\alpha y \\ \int \alpha dz
\end{pmatrix}, \begin{pmatrix}
    0 & 0\\ 0 & 0
\end{pmatrix}, \alpha, \begin{pmatrix}
    \alpha x^3a'-x^2y\alpha b'+xy^2\alpha a'
 -y^3\alpha b'-\frac{1}{2}\alpha'x +x\alpha a-y\alpha b
    \\
    -\frac{1}{2}y\alpha'+\alpha y^3 a'+\alpha x^2ya'+\alpha x y^2 b'+\alpha x^3 b' +\alpha y a + \alpha x b
\end{pmatrix}\Bigg).
\end{equation}
\\

We now want to apply Theorem \ref{teo integrazione per parti 2} to $ V_{\alpha}$ (\ref{simmetria additive mb 1}) and $ V_{\beta}$ (\ref{simmetria additive mb 2}).\\
Let us begin with $\ V_{\beta}$. 
In order to prove that Hypothesis A' holds, we have to check that
\begin{equation*}
    \begin{matrix}
    H_{\alpha}=\begin{pmatrix}
        -y\beta'(z)\\x\beta'(z)
    \end{pmatrix} \; \; 
        C_{\alpha,k} H_{k}
    =\begin{pmatrix}
        \beta(z)\beta'(z)x\\\beta(z)\beta'(z)y
    \end{pmatrix}
            \\ Y(H_{\alpha})= 
    \begin{pmatrix}
        \beta(z) \beta'(z) x \\
        \beta(z) \beta'(z) y
    \end{pmatrix}\\
    L(Y^i) =
    \begin{pmatrix}
        \beta'(z)\big( 1 - (a(r^2)y+b(r^2)x\big) \\ -\beta'(z)\big(1-(a(r^2)x-b(r^2)y\big) \\ 0 
    \end{pmatrix}\\
    \Sigma_{\alpha}(Y^i) =
   \begin{bmatrix}
            \begin{pmatrix}
                0\\ \beta(z)
            \end{pmatrix}| \begin{pmatrix}
                -\beta(z)\\0
            \end{pmatrix}|\begin{pmatrix}
                0\\0
            \end{pmatrix}
        \end{bmatrix}\\    
    L(Y(Y^i)) 
        = \begin{pmatrix}
            -2\beta(z)\beta'(z) -\beta^2(z)(a(r^2)x-b(r^2)y)\\ -2\beta(z)\beta'(z) y (a(r^2)y + b(r^2)x) \\ 0
        \end{pmatrix}\\
        \Sigma_{\alpha}(Y(Y^i))
        =\begin{bmatrix}
            \begin{pmatrix}
                -\beta(z)^2\\0
            \end{pmatrix}|&\begin{pmatrix}
                0\\-\beta(z)^2
            \end{pmatrix}|&\begin{pmatrix}
                0\\0
            \end{pmatrix}
        \end{bmatrix}
         \end{matrix}
\end{equation*}
$\in L^2(\mathbb{P})$. In the previous example  we easily concluded by saying that all this terms were polynomials of Gaussian random variables $ X$ and $Y$. Now, $ X$ and $ Y$ are no more Gaussian under the probability $\mathbb{P}$ (we added a drift and the solution is no more the $\mathbb{P}-$ Brownian motion), so we must use the Lyapunov approach.
Hypothesis A' applied to this example requires the integrability of terms given by the product of powers of $ X_t,Y_t$ and the functions $a(R^2_t),b(R^2_t).$ Consider the function \[ \phi(t,x,y)=e^{k(t)(x^2+y^2)}=e^{k(t)r^2}.\]
If $ k(t)\geq0$, since both $a$ and $b$ have at most polynomial growth, $\phi$ provides an upper bound for all the required quantities. Furthermore, since in this example we take  $ a$ negative we have that
\begin{equation*}
    L(\phi)=\Big[\Big(k'(t)+2k^2(t)+2k(t)a(r^2)\Big)r^2+2k(t)\Big]\phi \leq \Big[\Big(k'(t)+2k^2(t)\Big)r^2+2k(t)\Big]\phi .
\end{equation*}
Thus, if we take $ k$ satisfying the equation \[ k'+2k^2<0 \]  we have that $ L(\phi)\leq 2k(t) \phi$, so $\phi$ is a Lyapunov function and the integrability of the terms required is guaranteed.  Hence, we can apply Theorem \ref{teo integrazione per parti 2} to the family $ V_{\beta}$, obtaining 
\begin{equation*}
    0 = \mathbb{E}_{\mathbb{P}} \Big[ F(X_t,Y_t) \int_0^t - Y_s \beta'(z)dW^1_s + X_s \beta'(z) d W^2_s  \Big] + \mathbb{E}_{\mathbb{P}}\Big[\beta(t) Y_t\partial_x F(X_t,Y_t)  - \beta(t)X_t \partial_y F(X_t,Y_t) \Big].
\end{equation*}

Concerning $ V_{\alpha}$, to verify Hypothesis A' (\ref{ipotesi A'}) we have to require that
\begin{equation*}
    H_{\alpha}= \begin{pmatrix}
    \alpha x^3a'-x^2y\alpha b'+xy^2\alpha a'+
 -y^3\alpha b'-\frac{1}{2}\alpha'x +x\alpha a-y\alpha b
    \\
    -\frac{1}{2}y\alpha'+\alpha y^3 a'+\alpha x^2ya'+\alpha x y^2 b'+\alpha x^3 b' +\alpha y a  + \alpha x b
\end{pmatrix}
\end{equation*}
\begin{equation*}
C_{\alpha k}H_k=\begin{pmatrix}
    0\\0
\end{pmatrix} \ \ \ \ \ \ \ \ \ \ \ \ \ \ \     Y(H_{\alpha})=\begin{pmatrix}
       Y(H_1)\\
       Y(H_2)
        \end{pmatrix} 
\end{equation*}
where
\begin{equation*}
    Y(H_1)=-\frac{\alpha\alpha'+2\alpha''\int \alpha(z)dz}{4}x+a(r^2)\Big(\frac{\alpha^2}{2}x+\alpha'\int \alpha(z)dz\Big)+
    \end{equation*}
\begin{equation*}
    +a'(r^2)\Big(\frac{5\alpha^2+2\alpha'\int\alpha(z)dz}{2}x^3+\frac{3\alpha^2}{2}xy^2+\alpha^2xy\Big)+\alpha^2 a''(r^2)\Big(x^5+x^3y^2+xy^4+x^2y\Big)+
\end{equation*}
\begin{equation*}
    -b(r^2)(\frac{\alpha^2}{2}y+\alpha'\int\alpha(z)dz\Big)-b'(r^2)\Big(\frac{a^2}{2}x^2+\frac{\alpha^2}{2}y^4+\alpha^2y^3+
\end{equation*}
\begin{equation*}
    +(2\alpha^2+\alpha'\int\alpha(z)dz)x^2y+\alpha'\int\alpha(z)dz\Big)-\alpha^2b''(r^2)\Big(y^5+2x^2y^3+x^4\Big);
\end{equation*}
\\
\begin{equation*}
    Y(H_2)=-\frac{\alpha\alpha'+2\alpha''\int \alpha(z)dz}{4}y+b(r^2)\Big(\frac{\alpha^2}{2}x+\alpha'\int \alpha(z)dz\Big)+
    \end{equation*}
\begin{equation*}
    b'(r^2)\Big(\frac{5\alpha^2+2\alpha'\int\alpha(z)dz}{2}x^3+\frac{3\alpha^2}{2}xy^2+\alpha^2xy\Big)+\alpha^2 b''(r^2)\Big(x^5+x^3y^2+xy^4+x^2y\Big)+
\end{equation*}
\begin{equation*}
    +a(r^2)(\frac{\alpha^2}{2}y+\alpha'\int\alpha(z)dz\Big)+a'(r^2)\Big(\frac{a^2}{2}x^2+\frac{\alpha^2}{2}y^4+\alpha^2y^3+
\end{equation*}
\begin{equation*}
    +(2\alpha^2+\alpha'\int\alpha(z)dz)x^2y+\alpha'\int\alpha(z)dz\Big)+\alpha^2a''(r^2)\Big(y^5+2x^2y^3+x^4\Big);
\end{equation*}
\\
\begin{equation*}
    L(Y^i)=\begin{pmatrix}
        \frac{1}{2}\alpha'(z)x + \frac{1}{2}\alpha(z)(  a(r^2)x - b(r^2)y )\\ \frac{1}{2}\alpha'(z) y + \frac{1}{2} \alpha(z) + (a(r^2)y + b(r^2)x) \\ \alpha(z)
    \end{pmatrix} 
\end{equation*}
\begin{equation*}
    L(Y(Y^i))=\begin{pmatrix}
        \alpha(z) \alpha'(z) x + \frac{1}{2} x \alpha''(z) \int \alpha(z) dz + (\frac{1}{4}\alpha^2(z)+\frac{1}{2}\alpha'(z)\int \alpha(z) dz ) (  a(r^2)x - b(r^2)y )    \\
        \alpha(z) \alpha'(z) y + \frac{1}{2} y \alpha''(z) \int \alpha(z) dz  +  (\frac{1}{4}\alpha^2(z)+\frac{1}{2}\alpha'(z)\int \alpha(z) dz ) (  a(r^2)y+ b(r^2)x ) 
        \\ \alpha'(z) \int \alpha(z) dz + \alpha^2(z)
    \end{pmatrix}
\end{equation*}
\begin{equation*}
    \Sigma_{\alpha}(Y^i)=  \begin{bmatrix}
        \begin{pmatrix}
            \frac{1}{2}\alpha(z) \\ 0
        \end{pmatrix}|&\begin{pmatrix}
            0\\\frac{1}{2}\alpha(z)
        \end{pmatrix}|&\begin{pmatrix}
            0\\0
        \end{pmatrix}
    \end{bmatrix}
\end{equation*}
\begin{equation*}
    \Sigma_{\alpha}(Y(Y^i))=\begin{bmatrix}
        \begin{pmatrix}
            \frac{1}{4}\alpha^2(z)+\frac{1}{2}\alpha'(z)\int \alpha(z) dz \\ 0
        \end{pmatrix}|&\begin{pmatrix}
            0\\  \frac{1}{4}\alpha^2(z)+\frac{1}{2}\alpha'(z)\int \alpha(z) dz 
        \end{pmatrix}|&\begin{pmatrix}
            0\\0
        \end{pmatrix}
    \end{bmatrix}.
\end{equation*}
are in $ L^2(\mathbb{P}).$
So, we need to verify the integrability of terms given by the product of powers of $ x,y$, functions $ a,b$ and their first and second derivatives. The Lyapunov function $\phi(t,x,y)=e^{k(t)(x^2+y^2)}$ considered for the family $ V_{\beta}$ still provides an upper bound for all the quantities required by the hypothesis applied to $ V_{\alpha}$. Since we already proved that $\phi$ is Lyapunov for this equation, Hypothesis A' is verified. Hence,  we can apply Theorem \ref{teo integrazione per parti 2} also for $ V_{\alpha}$:
\begin{equation*}
    \int \alpha(z) dz \cdot \mathbb{E}_{\mathbb{P}}[\big( a(r^2)x - b(r^2)y\big)\partial_x F(X_t,Y_t) + \big(a(r^2)y+b(r^2)x \big)\partial_y F(X_t,Y_t) + \frac{1}{2} D^2 F(X_t,Y_t)]=
\end{equation*}
\begin{equation*}
    = \mathbb{E}_{\mathbb{P}}[F(X_t)\int_0^T ( \alpha x^3a'-x^2y\alpha b'+xy^2\alpha a'-y^3\alpha b'-\frac{1}{2}\alpha'x +x\alpha a-y\alpha b)dW^1_t
\end{equation*}
\begin{equation*}
     + (   \frac{1}{2}y\alpha'+\alpha y^3 a'+\alpha x^2ya'+\alpha x y^2 b'+\alpha x^3 b' +\alpha y a  + \alpha x b) dW^2_t]+
\end{equation*}
\begin{equation*}
    +\mathbb{E}_{\mathbb{P}}[\frac{1}{2} \alpha x \partial_x F(X_t,Y_t) + \frac{1}{2}\alpha y \partial_y F(X_t,Y_t)].
\end{equation*}

\subsection*{Stochastic Lotka Volterra model}
Consider now the following SDE:
\begin{equation}\label{Lotka Volterra}
    \begin{pmatrix}
        dX_t\\
        dY_t\\
        dZ_t
    \end{pmatrix}=\begin{pmatrix}
        X_t(\alpha-\beta Y_t)\\Y_t(\delta X_t-\gamma)\\1
    \end{pmatrix}dt + \begin{pmatrix}
        \tilde{\sigma}X_t & 0 \\ 0 & \tilde{\sigma}Y_t \\ 0 & 0
    \end{pmatrix}\begin{pmatrix}
        dW^1_t\\ dW^2_t
    \end{pmatrix}
\end{equation}
which can be seen as the well known predator-prey Lotka Volterra model plus a stochastic noise (see \cite{Arato}).

We are looking for infinitesimal symmetries $ V=(Y,C,\tau,H)$, with
\begin{equation*}
\begin{pmatrix}
   Y= \begin{pmatrix}
        f(x,y,z)\\
        g(x,y,z)\\
        m(x,y,z)
    \end{pmatrix};
 &

    C= \begin{pmatrix}
        0 & c(x,y,z) \\
        -c(x,y,z) & 0
    \end{pmatrix};
    &   \tau=\tau(x,y,z);

    &   H=\begin{pmatrix}
        H_1(x,y,z)\\
        H_2(x,y,z)
    \end{pmatrix}
\end{pmatrix}
\end{equation*} 
satisfying the determining equations from Theorem \ref{equazioni determinanti}, i. e.
\begin{equation*}
    (\alpha-\beta y)f - \beta x g -(\alpha  - \beta y)x f_x - (\delta x- \gamma )yf_y-f_z -\frac{1}{2}\tilde{\sigma}^2x^2f_{xx}-\frac{1}{2}\tilde{\sigma}^2y^2f_{yy}-\tilde{\sigma}xH_1+\tau x(\alpha -\beta y)=0
\end{equation*}
\begin{equation*}
   \delta yf+ (\delta x-\gamma )g  -(\alpha  - \beta y)x g_x - (\delta x- \gamma )yg_y-g_z-\frac{1}{2}\tilde{\sigma}^2x^2g_{xx}-\frac{1}{2}\tilde{\sigma}^2y^2g_{yy}-\tilde{\sigma}yH_2+\tau y(\delta x - \gamma)=0
\end{equation*}
\begin{equation*}
     (\alpha  - \beta y)x m_x + (\delta x- \gamma )ym_y+m_z+\frac{1}{2}\tilde{\sigma}^2x^2m_{xx}+\frac{1}{2}\tilde{\sigma}^2y^2m_{yy}=\tau
\end{equation*}
\begin{equation*}
    f\tilde{\sigma}-\tilde{\sigma}xf_x+\frac{1}
    {2}\tau\tilde{\sigma}x=0
\end{equation*}
\begin{equation*}
    \tilde{\sigma}yf_y=\tilde{\sigma}xc
\end{equation*}
\begin{equation*}
    \tilde{\sigma}x g_x=-\tilde{\sigma }yc
\end{equation*}
\begin{equation*}
    \tilde{\sigma}g-\tilde{\sigma}yg_y+\frac{1}{2}\tau\tilde{\sigma}y=0
\end{equation*}
from which two families of infinitesimal symmetries arise 
\begin{equation*}
    V_a=\Bigg( \begin{pmatrix}
        \frac{1}{2}a(z)x\ln x\\  \frac{1}{2}a(z) y \ln y\\\int a(z)dz
    \end{pmatrix},\begin{pmatrix}
        0&0\\0&0
    \end{pmatrix},a(z),\begin{pmatrix}
        -\frac{1}{4}a(z)\tilde{\sigma}+\frac{1}{2\tilde{\sigma}}a(z)(\alpha-\beta y)-\frac{1}{2\tilde{\sigma}}a'(z) \ln x-\frac{1}{2\tilde{\sigma}}\beta a(z) y\ln y\\
        -\frac{1}{4}a(z)\tilde{\sigma}+\frac{1}{2\tilde{\sigma}}a(z)(\delta x - \gamma)-\frac{1}{2\tilde{\sigma}}a'(z)\ln y+\frac{1}{2\tilde{\sigma}}\delta a(z) x\ln x
    \end{pmatrix}\Bigg);
\end{equation*}
\begin{equation*}
    V_b=\Bigg( \begin{pmatrix}
        b(z)x \ln y\\-b(z)y \ln x\\0
    \end{pmatrix}, \begin{pmatrix}
        0&b(z)\\-b(z)&0
    \end{pmatrix}, 0, \begin{pmatrix}
        -\frac{1}{2}\tilde{\sigma}b(z)+\frac{b(z)}{\tilde{\sigma}}(\delta x - \gamma)-\frac{1}{\tilde{\sigma}}  b'(z)\ln y+\frac{\beta}{\tilde{\sigma}}b(z) y \ln x\\
        \frac{1}{2}\tilde{\sigma}b(z)-\frac{b(z)}{\tilde{\sigma}}(\alpha-\beta y)+\frac{1}{\tilde{\sigma}} b'(z)\ln x +\frac{\delta}{\tilde{\sigma}}b(z) x \ln y
    \end{pmatrix}\Bigg).
\end{equation*}
We now want to apply Theorem \ref{teo integrazione per parti 2}, so we must check Hypothesis A' (\ref{ipotesi A'}).\\
Concerning $ V_a$, we have to verify that
\begin{equation*}
    H=\begin{pmatrix}
        -\frac{1}{4}a(z)\tilde{\sigma}+\frac{1}{2\tilde{\sigma}}a(z)(\alpha-\beta y)-\frac{1}{2\tilde{\sigma}}a'(z) \ln x-\frac{1}{2\tilde{\sigma}}\beta a(z) y \ln y\\
        -\frac{1}{4}a(z)\tilde{\sigma}+\frac{1}{2\tilde{\sigma}}a(z)(\delta x - \gamma)-\frac{1}{2\tilde{\sigma}}a'(z)\ln y+\frac{1}{2\tilde{\sigma}}\delta a(z) x \ln x
    \end{pmatrix};
\end{equation*}
\begin{equation*}
    C_{\alpha,k}H_{k}=\begin{pmatrix}
        0\\0
    \end{pmatrix};
\end{equation*}
\begin{equation*}
    Y(H^1)=- \frac{a(z)a'(z)}{4\tilde{\sigma}} \ln x-\frac{a(z)^2\beta}{2\tilde{\sigma}} y \ln y-\frac{\beta a(z)^2}{4\tilde{\sigma}}y \ln^2 y+
    \end{equation*}
\begin{equation*}
    +\int a(z)dz(-\frac{a'(z)\tilde{\sigma}}{4}+\frac{a'(z)\alpha}{2\tilde{\sigma}}-\frac{a'(z)\beta}{2\tilde{\sigma}}y-\frac{a''(z)}{2\tilde{\sigma}}\ln x-\frac{a'(z)\beta}{2\tilde{\sigma}} y \ln y)
\end{equation*}
\begin{equation*}
    Y(H^2)=    \frac{a(z)^2\delta}{2\tilde{\sigma}} x \ln x+\frac{a(z)^2\delta}{4\tilde{\sigma}}x \ln^2 x-\frac{a(z)a'(z)}{4\tilde{\sigma}} \ln y+
   \end{equation*}
\begin{equation*} 
    +\int a(z)dz(-\frac{a'(z)\tilde{\sigma}}{4}-\frac{a'(z)\gamma}{2\tilde{\sigma}}+\frac{a'(z)\delta}{2\tilde{\sigma}}x-\frac{a''(z)}{2\tilde{\sigma}} \ln y+\frac{a'(z)\delta}{2\tilde{\sigma}} x \ln x)
\end{equation*}
\begin{equation*}
   L(Y^i) = \begin{pmatrix}
        \frac{a(z)\alpha}{2}x-\frac{a(z)\beta}{2}xy+  \frac{a(z)\alpha}{2}x \ln x -\frac{a(z)\beta}{2}xy \ln x+\frac{a'(z)}{2}x \ln x+\frac{\tilde{\sigma}^2a(z)}{4}x \\
        \frac{a(z)\delta}{2}xy-\frac{a(z)\gamma}{2}y+\frac{a(z)\delta}{2}xy \ln y-\frac{a(z)\gamma}{2}y \ln y+\frac{a'(z)}{2}y \ln y+\frac{\tilde{\sigma}^2 a(z)}{4}y\\a(z)
    \end{pmatrix};    
\end{equation*}
\begin{equation*}
\Sigma_{\alpha}(Y^i)=\begin{bmatrix}
      \begin{pmatrix}  \frac{a(z)\tilde{\sigma}}{2}(x+x \ln x)\\0 \end{pmatrix}| & \begin{pmatrix} 0 \\ \frac{a'(z)\tilde{\sigma}}{2}x^2  \ln x\\
        0 \end{pmatrix} |& \begin{pmatrix} 0 \\ 0 \end{pmatrix}
    \end{bmatrix};
    \end{equation*}
\begin{equation*}
  L(Y(Y^1))  = 
        \frac{a(z)^2\alpha}{4}(x+3x \ln x+x \ln^2 x)-\frac{a(z)^2\beta}4(xy+3xy \ln x+ xy \ln^2 x)+\frac{a'(z)\int a(z)dz \alpha}{2}(x+x \ln x)+
 \end{equation*}       
  \begin{equation*}  
        -\frac{a'(z)\int a(z)dz \beta}{2}(xy+xy \ln x)+\frac{a(z) a'(z)}{2}(x \ln x+x \ln^2 x) + \frac{a(z)a'(z)+a''(z)\int a(z)dz}{2}x \ln x +
  \end{equation*}
  \begin{equation*}
    +\frac{a(z)^2\tilde{\sigma}}{8}(3x+2 \ln x)+\frac{a'(z)\int a(z)dz \tilde{\sigma}}{4}x
\end{equation*}

\begin{equation*}
  L(Y(Y^2))  = \frac{a(z)^2\delta}{4}(xy+3xy \ln y+xy \ln^2 y)-\frac{a(z)^2\gamma}{4}(y+3y\ln y+y \ln^2 y)+\frac{a'(z)\int a(z)dz \delta}{2}(xy+xy \ln y) +
  \end{equation*}
  \begin{equation*}
  -\frac{a'(z)\int a(z)dz \gamma}{2}(y+y \ln y)+\frac{a(z) a'(z)}{2}y (\ln y+y  \ln^2 y) + \frac{a(z)a'(z)+a''(z)\int a(z)dz}{2}y \ln y +
  \end{equation*}
  \begin{equation*}
  +\frac{a(z)^2\tilde{\sigma}}{8}(3y+2 \ln y)+\frac{a'(z)\int a(z)dz \tilde{\sigma}}{4}y
\end{equation*}

\begin{equation*}
  L(Y(Y^3))  = a'(z)\int a(z)dz + a^2(z)     
\end{equation*}
\begin{equation*}
    \Sigma_{\alpha}(Y(Y^1))=\begin{pmatrix}
           \frac{\tilde{\sigma}a^2}{4}x(\ln^2x+1+3\ln x)+\frac{\tilde{\sigma}a'\int a dz}{2}x (\ln x+1) \\  0
       \end{pmatrix}
\end{equation*}

\begin{equation*}
    \Sigma_{\alpha}(Y(Y^2))=\begin{pmatrix}
           0\\  \frac{\tilde{\sigma}a^2}{4}y(\ln^2 y+1+3 \ln y)+\frac{\tilde{\sigma}a'\int a dz}{2}y (\ln y +1) 
       \end{pmatrix}
\end{equation*}
\begin{equation*}
    \Sigma_{\alpha}(Y(Y^3))=\begin{pmatrix}
           0\\  0
       \end{pmatrix}
\end{equation*}
are in $ L^2(\mathbb{P}).$ Summarizing, we need to verify the integrability of terms of the form
\begin{equation}\label{*}
    x^2,y^2,x^2y^2,\ln^2 x,\ln^2 y,x^2 \ln^2 x,y^2 \ln^2 y,x^2y^2 \ln^2 x, x^2y^2 \ln^2 y,x^2 \ln^4 x,y^2 \ln^4 y,x^2y^2 \ln^4 x, x^2y^2 \ln^4 y.
\end{equation}
If we take \[\phi(x,y)=Ax^5+By^5+Cx^4y+Dx^3y^2+Ex^2y^3+Fxy^4\]
we have that
\begin{equation*}
    L(\phi)= (5\alpha+10\tilde{\sigma}^2)Ax^5+(4\alpha+6\tilde{\sigma}^2-\gamma)Bx^4y+(3\alpha+3\tilde{\sigma}^2-2\gamma)C x^3y^2+(2\alpha+4\tilde{\sigma}^2-3\gamma)D x^2y^3+
\end{equation*} 
\begin{equation*}
   (\alpha+6\tilde{\sigma}^2-4\gamma)Exy^4+ 
(10\tilde{\sigma}^2-5\gamma)Fy^5+xy\Bigg[ (B\delta-5A\beta)x^4+(2C\delta-4B\beta)x^3y +
\end{equation*}
\begin{equation*}
  +(3C\delta-3B\beta)x^2y^2+(4E\delta-2D\beta)x^3+(5F\delta-E\beta)y^4\Bigg].
  \end{equation*}

So, in order to have $ L(\phi)\leq M\phi$, we  require that
\begin{equation*}
    \begin{matrix}
     (B\delta-5A\beta)\leq 0& (2C\delta-4B\beta)\leq0   &(3D\delta-3C\beta)\leq0&(4E\delta-2D\beta)\leq0&(5F\delta-E\beta)\leq 0
    \end{matrix}
\end{equation*}
 It suffices, for instance, to take $ A=1, B=4\beta/\delta, C=7(\beta/\delta)^2, D=6(\beta/\delta)^3, E=2(\beta/\delta)^4,F=\frac{1}{5}(\beta/\delta)^5.$
  Let us analyze the behavior in the neighborhood of zero by considering
  \[\psi(x,y)=\frac{1}{x^2}+\frac{1}{y^2}.\] As we are considering a neighborhood of $\underline{0}$, and since $ X_t$ and $ Y_t$ are always $\geq0$ (they represent the number of preys and predators at time $ t$), we can restrict to the case $ x,y \in U:=\{B(0,\epsilon \cap \{x>0\}\cap\{y>0\}\}.$ Since in $ U$ $ y \leq 1$, we have that
  \[L(\psi)=(3\tilde{\sigma}^2-2\alpha)\frac{1}{x^2}+(2\gamma+3\tilde{\sigma}^2)\frac{1}{y^2}+2\beta\frac{y}{x^2}-2\delta\frac{x}{y^2}\leq (3\tilde{\sigma}^2-2\alpha)\frac{1}{x^2}+(2\gamma+3\tilde{\sigma}^2)\frac{1}{y^2}+2\beta\frac{1}{x^2}\leq M\psi \]
  for a suitable $ M.$ Hence, for suitable constants $ M,N$ we have that $ (\ref{*})\leq N\phi+M\psi$, with $\phi$ and $\psi$ Lyapunov. Thus, Hypothesis A' is verified and from Theorem \ref{teo integrazione per parti 2} we obtain the following formula for the family $ V_{\alpha}$
  \begin{equation*}
      -\int a(z)dz \mathbb{E}_{\mathbb{P}}[X_t(\alpha-\beta Y_t) F_x(X_t,Y_t)+Y_t(\delta X_t-\gamma)F_y(X_t,Y_t)+\frac{\tilde{\sigma}^2X_t^2}{2}F_{xx}(X_t,Y_t)+\frac{\tilde{\sigma}^2Y_t^2}{2}F_{yy}]+
  \end{equation*}
  \begin{equation*}
      +\mathbb{E}_{\mathbb{P}}\Bigg[F(X_t,Y_t)\int_0^t \Big(-\frac{a(s)\tilde{\sigma}}{4}+\frac{a(s)}{2\tilde{\sigma}}(\alpha-\beta Y_s)-\frac{1}{2\tilde{\sigma}}a'(s)\ln (X_s)-\frac{1}{2\tilde{\sigma}}\beta a(s) Y_s\ln (Y_s)\Big)dW^1_s+
  \end{equation*}
  \begin{equation*}
      \Big(-\frac{a(s)\tilde{\sigma}}{4}+\frac{a(s)}{2\tilde{\sigma}}(\delta X_s-\gamma)-\frac{1}{2\tilde{\sigma}}a'(s)\ln (Y_s)+\frac{1}{2\tilde{\sigma}}\delta a(s) X_s\ln (X_s)\Big)\Bigg]+
  \end{equation*}
  \begin{equation*}
      +\mathbb{E}_{\mathbb{P}}[\frac{a(t)}{2}X_t \ln (X_t) F_x(X_t,Y_t)+\frac{a(t)}{2}Y_t \ln (Y_t)F_y(X_t,Y_t)]+
  \end{equation*}
  \begin{equation*}   
            -\mathbb{E}_{\mathbb{P}}[\frac{a(t)}{2}X_t \ln (X_t) F_x(X_0,Y_0)+\frac{a(t)}{2}Y_t \ln (Y_t)F_y(X_0,Y_0)]=0.
  \end{equation*}
 Concerning the second family of symmetries $V_b$, by using a similar approach to that used in proving the validity of Hypothesis A', it is possible to derive the following integration by parts formula:
 \begin{equation*}
  \mathbb{E}_{\mathbb{P}}\Bigg[ F(X_t,Y_t)\int_0^t \Big( -\frac{\tilde{\sigma}b(s)}{2}+\frac{b(s)}{\tilde{\sigma}}(\delta X_s-\gamma)-\frac{b'(s)}{\tilde{\sigma}}\ln (Y_s)+\frac{\beta b(s)}{\tilde{\sigma}}Y_s \ln (X_s)\Big)dW^1_s+ 
 \end{equation*}
 \begin{equation*}
     +\Big(\frac{\tilde{\sigma}b(s)}{2}-\frac{b(s)}{\tilde{\sigma}}(\alpha-\beta Y_s)+\frac{b'(s)}{\tilde{\sigma}}\ln (X_s)+\frac{\delta b(s)}{\tilde{\sigma}}X_s \ln (Y_s)\Big)dW^2_s\Bigg]+
 \end{equation*}
 \begin{equation*}
     +\mathbb{E}_{\mathbb{P}}[b(t) X_t \ln (Y_t) F_x(X_t,Y_t)-b(t) Y_t \ln (X_t) F_y(X_t,Y_t)]+
  \end{equation*}
 \begin{equation*}   
          -\mathbb{E}_{\mathbb{P}}[b(t) X_t \ln (Y_t) F_x(X_0,Y_0)-b(t) Y_t \ln (X_t) F_y(X_0,Y_0)]=0.   
 \end{equation*}

\bibliographystyle{plain}
\bibliography{references}

\end{document}